\documentclass[11pt]{article}
 \usepackage[utf8]{inputenc}
 \usepackage[T1]{fontenc}
\usepackage[all]{xy}

\usepackage{tikz}
\usepackage{tikz-cd}

\usepackage{url}

\usepackage{amsmath,amstext,amsthm,amssymb,amscd,version}
\usepackage{mathtools}
\usepackage{xcolor}
\usepackage{comment}

\def\bA{\mathbb{A}}

\def\bZ{\mathbb{Z}}
\def\bQ{\mathbb{Q}}
\def\bR{\mathbb{R}}
\def\bC{\mathbb{C}}

\def\cO{\mathcal{O}}

\def\cD{\mathcal{D}}

\def\cL{\mathcal{L}}
\def\cM{\mathcal{M}}

\newtheorem{thmintro}{Theorem}

\newtheorem{thm}{Theorem}[section]
\newtheorem*{thm*}{Theorem}

\newtheorem{lem}[thm]{Lemma}
\newtheorem{prop}[thm]{Proposition}
\newtheorem{cor}[thm]{Corollary}
\newtheorem{rem}[thm]{Remark}

\newtheorem{defn}[thm]{Definition}
\newtheorem*{definition*}{Definition}

\newtheorem*{qtn}{Question}

\DeclareMathOperator{\HH}{\mathrm{H}}

\DeclareMathOperator{\SL}{SL}

\DeclareMathOperator{\GL}{GL}

\DeclareMathOperator{\Sh}{Sh}
\DeclareMathOperator{\sh}{sh}
\DeclareMathOperator{\Alb}{Alb}
\DeclareMathOperator{\alb}{alb}

\newcommand{\yo}[1]{{\color{purple} \sf [#1]}}

\title{Existence of the Shafarevich morphism for semisimple local systems on quasi-projective varieties}
\author{Yohan Brunebarbe}
\date{}

\begin{document}

\maketitle

\begin{abstract}
Let X be a normal connected complex algebraic variety equipped with a semisimple complex representation of its fundamental group. Then, under a maximality assumption, we prove that the covering space of X associated to the kernel of the representation has a proper surjective holomorphic map with connected fibres onto a normal analytic space with no positive-dimensional compact analytic subspace. 
\end{abstract}


\section{Introduction}

In an attempt to understand which complex analytic spaces can be realised as the universal covering of a complex algebraic variety, Shafarevich asked whether the universal covering $\tilde{X}$ of any smooth projective variety $X$ is holomorphically convex \cite[IX.4.3]{Shafarevich_book}. In other words, does there exists a proper holomorphic map $sh_{\tilde{X}} \colon \tilde{X} \to \Sh(\tilde{X})$ to a Stein analytic space $\Sh(\tilde{X})$? If such a map $sh_{\tilde{X}}$ exists, then one may assume in addition that it is surjective with connected fibres, and then it is unique: it is the so-called Cartan-Remmert reduction of $\tilde{X}$. Shafarevich question, nowadays known as Shafarevich conjecture, has been established when $\pi_1(X)$ is virtually nilpotent \cite{Katzarkov97} or when $\pi_1(X)$ has a faithful complex linear representation \cite{Eyssidieux-reductive, EKPR}. See also \cite{Kollar-Shafarevich, Campana94, Lasell-Ramachandran, Katzarkov-Ramachandran, CCE15}. The general case is however still open, due to the lack of methods to deal with non-linear fundamental groups. In contrary, when the fundamental group admits many linear representations, one can use the whole apparatus of tools coming from classical and non-abelian Hodge theory and the theory of harmonic maps towards buildings \cite{Corlette, Simpson_Higgs, Gromov-Schoen, Mochizuki_asterisque, Brotbek-Daskalopoulos-Deng-Mese}. \\

The goal of this paper is to study the following natural generalization of Shafarevich question.
\begin{qtn}\label{question Shafa}
Let $X$ be a (non-necessarily proper) normal complex algebraic variety and $H \subset \pi_1(X)$ a normal subgroup. Under which conditions the corresponding Galois étale covering $\tilde{X}^H$ of $X$ is holomorphically convex? 
\end{qtn}

When $H = \{ 1\}$, this question is answered positively in \cite{Green-Griffiths-Katzarkov} and \cite{Aguilar-Campana}, assuming either that $\pi_1(X)$ is residually nilpotent and that the quasi-Albanese map of $X$ is proper,
or that $X$ admits an admissible integral variation of mixed Hodge structures with a proper period map.\\

In order to both remove this properness restriction and deal with the general case, we propose the following definition.

\begin{definition*}
Let $X$ be a connected complex algebraic variety. Let $H \subset \pi_1(X)$ be a normal subgroup. The pair $(X,H)$ is called maximal if, for any connected complex algebraic variety $\bar X$ equipped with an open immersion $X \to \bar X$ and every holomorphic map $v \colon \Delta \to \bar X$ such that $v(\Delta^\ast) \subset X$ and $v(0) \notin X$, the composite homomorphism $\bZ = \pi_1(\Delta^\ast) \to \pi_1(X) \to \pi_1(X) \slash H$ is not constant. 
\end{definition*}

If $X$ is a connected smooth complex algebraic variety and $H \subset \pi_1(X)$ is a normal subgroup such that $\pi_1(X) \slash H$ is torsion-free, then the pair $(X,H)$ extends as a maximal pair on a smooth partial compactification of $X$, see Proposition \ref{maximal extension}.\\

Our main result in this paper is the following:

\begin{thmintro}\label{main theorem}
Let $X$ be a normal connected complex algebraic variety. Let $H \subset \pi_1(X)$ be a normal subgroup which is commensurable to the intersection of the kernels of the monodromy representations of a collection of semisimple complex local systems on $X$ of bounded rank. Let $\tilde{X}^H \to X$ be the Galois étale cover corresponding to $H$. Assume that the pair $(X, H)$ is maximal. Then,
\begin{enumerate}
\item  There exists a normal complex space $\tilde{\Sh}_X^H$ with no positive-dimensional compact analytic subspace and a surjective proper holomorphic map with connected fibers $\tilde{\sh}_X^H \colon \tilde{X}^H \rightarrow \tilde{\Sh}_X^H$.
\item There exists a connected complex analytic space $\Sh_X^H$ and a surjective proper holomorphic map with connected fibers $\sh_X^H \colon X \rightarrow \Sh_X^H$ such that the following property holds:\\
For any connected proper complex algebraic variety $Z$ equipped with an algebraic morphism $f \colon Z \to X$, $\sh_X^H ( f(Z) )$ is a point if and only if the image of $\pi_1(Z)$ in $\pi_1(X) \slash H$ is finite.
\end{enumerate} 
\end{thmintro}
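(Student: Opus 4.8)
The plan is to realize $\sh_X^H$ as a simultaneous Stein factorization of the reductive Shafarevich fibrations attached to the given local systems, to lift it to the cover $\tilde X^H$, and to use the maximality hypothesis to force properness and the absence of compact positive-dimensional subspaces in the target.

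\emph{Reductions.} Since $X$ is normal, a resolution of singularities induces an isomorphism on $\pi_1$, so I would assume $X$ smooth; since the conclusions are insensitive to finite étale covers, the commensurability hypothesis lets me assume $H=\bigcap_i N_i$, where $N_i:=\ker(\rho_i)$ and $\rho_i$ is the monodromy representation of the local system $V_i$, with $\pi_1(X)/H$ torsion-free. By Proposition~\ref{maximal extension} I then fix a smooth partial compactification $X\hookrightarrow\bar X$ with simple normal crossing boundary $D$ still realizing the maximal pair. The bounded-rank hypothesis puts the $V_i$ in finitely many finite-type Betti moduli spaces, which lets the constructions below be run in families; in particular, since the image of a holomorphic map to a complex space has dimension at most $\dim X$, every simultaneous Stein factorization below is computed by finitely many of the $V_i$.

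\emph{Construction of the map.} For each $i$, choose a tame harmonic bundle on $(\bar X,D)$ whose underlying flat bundle is $V_i$ (Corlette--Simpson, Mochizuki); its harmonic metric yields a $\rho_i$-equivariant pluriharmonic map from $\tilde X$ to the symmetric space $\GL(\bC)/\U$ of the appropriate rank, and, for every non-archimedean valuation of the finitely generated field of definition of $\rho_i$, a $\rho_i$-equivariant pluriharmonic map to the associated Euclidean building, all with logarithmic control along $D$ (norm estimates for tame harmonic bundles; Gromov--Schoen, Mochizuki, Brotbek--Daskalopoulos--Deng--Mese). By Siu--Sampson rigidity together with Simpson's spectral covers these determine a dominant map $\sh_{\rho_i}\colon X\to\Sh_{\rho_i}$ to a complex space --- the reductive Shafarevich fibration of $V_i$ --- and I let $\sh_X^H$ be the Stein factorization of $X\to\prod_{i\in F}\Sh_{\rho_i}$ for a finite $F$ after which neither the image dimension nor the general fibre changes, and $\tilde\sh_X^H\colon\tilde X^H\to\tilde\Sh_X^H$ the induced map on the cover $\tilde X^H$. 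To see that $\tilde\Sh_X^H$ carries no positive-dimensional compact analytic subspace, note that, $\tilde\sh_X^H$ being proper, every compact analytic subspace of the target is the image of its (compact) preimage, so it suffices that $\tilde\sh_X^H$ contract every compact connected analytic subspace $\tilde Z\subset\tilde X^H$; on a resolution $W\to\tilde Z$ the composite $\pi_1(W)\to\pi_1(\tilde X^H)=H\xrightarrow{\rho_i}\GL(\bC)$ has image in $\rho_i(H)$, which is finite by commensurability, so on a finite étale cover of $W$ the pullback of $V_i$ is trivial and the maps above become genuine pluriharmonic maps from a compact Kähler manifold to CAT(0) spaces, hence constant --- so $\tilde Z$ lies in a common fibre of the $\sh_{\rho_i}$, hence in a fibre of $\tilde\sh_X^H$.

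\emph{Properness, and the characterization of fibres.} The crucial point is that $\sh_X^H$ is proper, and this is exactly where maximality is used: extending the holomorphic data meromorphically to $\bar X$, the Stein factorization can fail to be proper over $X$ only if a component of $D$ is contracted, i.e.\ only if along some holomorphic disc $v\colon\Delta\to\bar X$ with $v(\Delta^\ast)\subset X$ and $v(0)\in D$ the local monodromy of every $V_i$ is trivial; but then $\pi_1(\Delta^\ast)\to\pi_1(X)/H$ is constant, since $H=\bigcap_i N_i$, contradicting the definition of a maximal pair. This gives the proper surjective maps with connected fibres of (1) and (2), the target in (1) being taken normal. Finally, given a morphism $f\colon Z\to X$ from a connected proper variety: if the image of $\pi_1(Z)$ in $\pi_1(X)/H$ is finite, then on a finite étale cover $Z'\to Z$ the map $f$ lifts to $\tilde X^H$, so $f(Z')$ is a compact analytic subspace of $\tilde X^H$, contracted by $\tilde\sh_X^H$ as above, whence $\sh_X^H(f(Z))$ is a point; conversely, if $\sh_X^H(f(Z))$ is a point then $f(Z)$ lies in a fibre, all the pluriharmonic maps above are constant on it, and therefore $\rho_i(\mathrm{im}(\pi_1(Z)\to\pi_1(X)))$ is bounded at every place of its field of definition --- hence finite --- for every $i$, i.e.\ the image of $\pi_1(Z)$ in $\pi_1(X)/H$ is finite. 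The main obstacle is the quasi-projective non-abelian Hodge input --- existence and boundary asymptotics on $(\bar X,D)$ of the pluriharmonic maps to buildings --- together with this boundary analysis, which must show that maximality is precisely what prevents a component of $D$ from being contracted.
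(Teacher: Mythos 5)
There is a genuine gap, and it sits exactly at the heart of the theorem: the compactness of the connected components of the fibres. Your construction already presupposes, for each single local system $V_i$, a ``reductive Shafarevich fibration $\sh_{\rho_i}\colon X\to\Sh_{\rho_i}$'' produced ``by Siu--Sampson rigidity together with Simpson's spectral covers.'' In the quasi-projective setting this object is not available off the shelf --- its existence is (a special case of) the statement being proved, and in the projective case it is the content of Eyssidieux's theorem, whose proof is precisely the delicate combination of archimedean and non-archimedean data that your sketch black-boxes. What is actually available as input is weaker: the Katzarkov--Zuo reduction $\sigma_X^{\rho}$ for a representation into a group over a non-archimedean local field (Theorem \ref{Katzarkov-Zuo-smooth-case}), and period maps for representations underlying $\bC$-VHS. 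Your properness argument then asserts that the Stein factorization of the product can fail to be proper only if some boundary disc $v\colon\Delta\to\bar X$ has trivial local monodromy for every $V_i$. This is unjustified: a connected component $Y$ of a fibre of the product of the Katzarkov--Zuo reductions is a closed algebraic subvariety of $X$ on which all the non-archimedean data are constant, and it may simply be non-proper; nothing in your argument converts its non-properness into a boundary disc along which the monodromy dies. Maximality of $(X,H)$ cannot be applied directly to such a disc --- it has to be applied to a period map restricted to $Y$, via Griffiths' properness criterion, and for that one first needs to know that the restrictions to $Y$ of the relevant representations form a \emph{finite} set of $\bC$-VHS with \emph{discrete} monodromy.

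That finiteness is the crucial new ingredient of the paper and is entirely absent from your proposal: Theorem \ref{criterion of finiteness} shows that a $\bQ$-constructible subset of a character variety all of whose $\bar\bQ$-points are $p$-adically bounded is finite. The paper applies it to the image $N_i$ of $M(\bar\bQ)$ in $M_B(Z_i,n)$, where $Z_i$ desingularizes a component of $Y$ and $M$ is the smallest $\bC^\ast$-invariant $\bQ$-Zariski closed subset containing the classes of the $V_i$; Mochizuki's deformation to $\bC$-VHS (Theorem \ref{Simpson}) and Bass's theorem then make the finitely many points of $N_i$ into VHS with monodromy in (a conjugate of) $\GL_{nr_i}(\bZ)$, so that Griffiths' criterion applies and the Lasell--Ramachandran-type Theorem \ref{Theorem_Lasell-Ramachandran} (itself a consequence of the finiteness theorem) yields finiteness of the image of $\pi_1$ of the fibre. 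Working, as you do, only with the $V_i$ themselves and the places of their fields of definition cannot substitute for this: a semisimple local system need not underlie a VHS, its building maps may all be constant, and its symmetric-space map has no discreteness, so your list of equivariant maps need not detect anything on $Y$. (Two smaller points: a resolution of singularities of a normal variety induces a surjection, not an isomorphism, on $\pi_1$ --- the reduction to the smooth case goes through Proposition \ref{Shafarevich: from smooth to normal} rather than your claim; and the reduction from ``commensurable'' to ``equal'' uses Proposition \ref{Shafarevich_finite_etale_map}, not torsion-freeness.)
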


It is easily seen that the morphisms $\tilde{\sh}_X^H$ and $\sh_X^H$ are uniquely characterized (up to unique isomorphism) by their respective defining properties.

\begin{thmintro}[{see Theorem \ref{Shafarevich_factorization}}]
Let $X$ be a normal connected complex algebraic variety. Let $\cL$ be a semisimple complex local system on $X$. Let $H$ be the kernel of the monodromy representation of $\cL$. Assume that the pair $(X, H)$ is maximal. Assume also that the image of the monodromy representation of $\cL$ is torsion-free. Then there exists a complex local system $\cM$ on $\Sh_X^H$ such that $\cL = (\sh_X^H)^{-1} \cM$. 
\end{thmintro}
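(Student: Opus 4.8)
The plan is to descend the monodromy representation of $\cL$ through the homomorphism that $\sh_X^H$ induces on fundamental groups. Note first that Theorem~\ref{main theorem} does apply to the pair $(X,H)$, since $H$ is the kernel of the monodromy of the single semisimple local system $\cL$, so $\sh_X^H$ and $\Sh_X^H$ exist with the properties stated there. Write $\phi := \sh_X^H \colon X \to S := \Sh_X^H$ and let $\rho \colon \pi_1(X) \to \GL_r(\bC)$ be the monodromy representation of $\cL$, so that $\ker \rho = H$ and $\Gamma := \rho(\pi_1(X)) = \pi_1(X) \slash H$ is torsion-free by hypothesis. Since $\phi$ is proper, surjective and has connected fibers, the induced homomorphism $\phi_\ast \colon \pi_1(X) \to \pi_1(S)$ is surjective; it therefore suffices to show that $\ker(\phi_\ast) \subseteq \ker(\rho) = H$. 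Granting this, $\rho$ factors as $\rho = \bar\rho \circ \phi_\ast$ for a (unique) representation $\bar\rho \colon \pi_1(S) \to \GL_r(\bC)$, and taking $\cM$ to be the complex local system on $S$ with monodromy $\bar\rho$ gives $(\sh_X^H)^{-1} \cM \cong \cL$, because the pullback has monodromy $\bar\rho \circ \phi_\ast = \rho$. (I use here that $S$ is connected and locally contractible, so complex local systems on $S$ correspond to representations of $\pi_1(S)$; if convenient one may first replace $S$ by its normalization, to which $\phi$ lifts because $X$ is normal and irreducible, and this affects neither the hypotheses nor the conclusion.)

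To prove $\ker(\phi_\ast) \subseteq H$ I would combine two ingredients. The first is the standard fact, at the heart of the theory of Shafarevich maps, that for a proper surjective holomorphic map with connected fibers onto a normal complex space the sequence $\pi_1(F) \to \pi_1(X) \to \pi_1(S) \to 1$ is exact for a general fiber $F$. The second is the assertion that for every fiber $F = \phi^{-1}(s)$ the image of $\pi_1(F)$ in $\Gamma$ is trivial. Since $H$ is normal in $\pi_1(X)$, these two statements immediately yield $\ker(\phi_\ast) \subseteq H$.

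For the second ingredient, observe that $F = \phi^{-1}(s)$ is compact because $\phi$ is proper, so $F_{\mathrm{red}}$ is a connected, reduced, compact analytic subspace of the algebraic variety $X$; by Chow's theorem (applied inside a proper algebraic compactification of $X$, noting that $F$ is closed in $X$) it is in fact a closed algebraic subvariety of $X$. Thus $F_{\mathrm{red}}$ is a connected proper complex algebraic variety equipped with the algebraic morphism $F_{\mathrm{red}} \hookrightarrow X$, and $\phi$ sends it to the single point $s$. By the defining property of $\sh_X^H$ in Theorem~\ref{main theorem}(2), the image of $\pi_1(F_{\mathrm{red}})$ in $\Gamma$ is then finite, hence trivial since $\Gamma$ is torsion-free; and $\pi_1(F) = \pi_1(F_{\mathrm{red}})$ because $F$ and $F_{\mathrm{red}}$ have the same underlying topological space. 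This is the only place where torsion-freeness of the monodromy group is used: without it one could only conclude that $\cL$ becomes trivial on the fibers of $\phi$ after a finite étale base change.

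I expect the only genuinely non-formal point to be the first ingredient above — the control of $\ker(\phi_\ast)$ by the fundamental group of a general fiber of $\phi$ — which has to be invoked with $X$ and $\Sh_X^H$ possibly singular; everything else is a formal manipulation of the universal property of $\sh_X^H$ furnished by Theorem~\ref{main theorem} together with the torsion-freeness hypothesis. As an alternative route one could instead use that $\Gamma$ acts on $\tilde{\Sh}_X^H$ with $\tilde{\sh}_X^H$ equivariant, that this action is properly discontinuous by properness of $\tilde{\sh}_X^H$ and hence free because $\Gamma$ is torsion-free, so that $\Sh_X^H = \tilde{\Sh}_X^H \slash \Gamma$, and then descend along $\tilde{\Sh}_X^H \to \Sh_X^H$ the trivial rank-$r$ local system on $\tilde{X}^H$ (which is $\Gamma$-equivariant via $\rho$ and pulled back from $\tilde{\Sh}_X^H$) to obtain the desired $\cM$.
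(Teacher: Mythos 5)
Your argument is essentially sound, but it is a genuinely different route from the paper's, and it contains one misstated ingredient that you should repair. The paper (Theorem \ref{Shafarevich_factorization}) never descends $\rho$ through $\ker(\phi_\ast)$ via fibers: it works entirely upstairs, observing that $\Gamma$ acts properly discontinuously on $\tilde{\Sh}_X^H$, hence freely by torsion-freeness, so that $\tilde{\Sh}_X^H \to \Sh_X^H$ is a Galois étale cover with group $\Gamma$; it then checks that the square formed by $\tilde{X}^H \to \tilde{\Sh}_X^H$ and $X \to \Sh_X^H$ is cartesian (a $\Gamma$-equivariant étale map between connected $\Gamma$-Galois covers of $X$ is an isomorphism), which immediately exhibits $\rho$ as the pullback of the surjection $\pi_1(\Sh_X^H) \to \Gamma$ classifying that cover. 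This is precisely the ``alternative route'' you sketch in your last sentence, so in effect you have the paper's proof as your plan B. Its advantage is that it bypasses any statement about fundamental groups of fibers of a proper map onto a (merely analytic, possibly singular) target, which is exactly where your plan A is fragile.

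Concerning plan A: the ``standard fact'' you invoke is not correct as stated. For a proper surjective holomorphic map with connected fibers the sequence $\pi_1(F) \to \pi_1(X) \to \pi_1(S) \to 1$ is in general \emph{not} exact at $\pi_1(X)$ for a \emph{general} fiber $F$ --- multiple fibers obstruct this (e.g.\ elliptic surfaces over $\bP^1$ with multiple fibers, where the quotient of $\pi_1(X)$ by the normal closure of the image of $\pi_1(F)$ is an orbifold fundamental group of the base, not $\pi_1(\bP^1)$). The correct statement is that $\ker(\phi_\ast)$ is the normal subgroup generated by the images of $\pi_1$ of \emph{all} set-theoretic fibers, and even this requires justification when the target is a normal complex analytic space rather than a variety (one needs that $\phi^{-1}(U)$ deformation retracts onto $\phi^{-1}(s)$ for small $U \ni s$, which holds for proper analytic maps but is not free). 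Your proof survives this correction only because your second ingredient is established for \emph{every} fiber, not just a general one; you should state ingredient one in the ``all fibers / normal generation'' form and supply or cite the analytic version. The rest of your argument (Chow's theorem applied to the compact fiber, the universal property of $\sh_X^H$ from Theorem \ref{main theorem}, triviality from finiteness plus torsion-freeness) is correct and matches the spirit of the paper's constructions.
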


When $X$ is a smooth projective variety, Theorem \ref{main theorem} was proved by Katzarkov-Ramachandran \cite{Katzarkov-Ramachandran} when $\dim X = 2$ and by Eyssidieux \cite{Eyssidieux-reductive} in general. Our proof follows a similar strategy as in \cite{Eyssidieux-reductive}, but with a twist. A crucial new ingredient is provided by the following result.

\begin{thmintro}[= Theorem {\ref{criterion of finiteness}}]
Let $\Gamma$ be a finitely generated group, $G$ be a reductive $\bQ$-algebraic group and $\Sigma$ be a $\bQ$-Zariski constructible subset of the character variety $M_B(\Gamma, G)$. Assume that for a prime number $p$ , all the conjugacy classes of representations corresponding to elements in the set $\Sigma(\bar \bQ)$ have bounded image when seen as representations with values in $G(\bar \bQ_p)$. Then $\Sigma$ consists in finitely many points. 
\end{thmintro}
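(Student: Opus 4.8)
Here is how I would go about it.

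\medskip

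The plan is to argue by contraposition, exploiting that $M_B(\Gamma,G)$ is affine: on an infinite constructible subset of an affine variety some regular function takes infinitely many — hence, over $\bar\bQ$, cofinitely many, hence some $w$-adically very large — values, and a coordinate of the character variety being $p$-adically large at a point $[\rho]$ forces $\rho$ to have unbounded image. So, assuming $\Sigma$ is infinite, I will produce $[\rho]\in\Sigma(\bar\bQ)$ whose associated semisimple representation has unbounded image in $G(\bar\bQ_p)$, contradicting the hypothesis. Throughout, write $\gamma_1,\dots,\gamma_k$ for a finite generating set of $\Gamma$.

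First I would reduce to $G=\GL_N$. Fix a faithful $\bQ$-representation $G\hookrightarrow\GL_N$. In characteristic zero a $G$-completely reducible subgroup of $\GL_N$ is $\GL_N$-completely reducible (as $G$ is reductive), so the semisimple representations parametrised by $M_B(\Gamma,G)$ remain semisimple in $\GL_N$; and since $G(\bar\bQ_p)$ is closed in $\GL_N(\bar\bQ_p)$, having bounded image in $G(\bar\bQ_p)$ is the same as having bounded image in $\GL_N(\bar\bQ_p)$. The closed immersion $R(\Gamma,G)\hookrightarrow R(\Gamma,\GL_N)$ induces a $\bQ$-morphism $q\colon M_B(\Gamma,G)\to M_B(\Gamma,\GL_N)$. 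I claim $q$ is quasi-finite: after conjugating so that $\sigma(\Gamma)\subseteq G$, its fibre over $[\sigma]$ is $G\backslash\{h\in\GL_N:h\sigma(\Gamma)h^{-1}\subseteq G\}/Z_{\GL_N}(\sigma(\Gamma))$, and since the Zariski closure $S:=\overline{\sigma(\Gamma)}$ has reductive identity component, the groups of automorphisms of $S$ induced by $G$- and by $\GL_N$-conjugation both have identity component $\mathrm{Inn}(S)$, which forces this double-coset set to be finite. Hence $q(\Sigma)$ is again an infinite constructible set, now inside $M_B(\Gamma,\GL_N)$, and it is enough to find a point of $q(\Sigma)(\bar\bQ)$ with unbounded image and lift it through $q$ into $\Sigma(\bar\bQ)$.

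Now fix a place $w$ of $\bar\bQ$ above $p$. Taking $\GL_N$-invariants is exact, so the finitely generated $\bQ$-algebra $\mathcal O(M_B(\Gamma,\GL_N))=\mathcal O(R(\Gamma,\GL_N))^{\GL_N}$ has generators $f_1,\dots,f_m$ lifting to $\GL_N$-conjugation-invariant regular functions $\tilde f_j$ on $\GL_N^{\,k}$, which after scaling by a nonzero integer $N_j$ we may take to be polynomials in the matrix entries and $\det^{-1}$ with $\bZ$-coefficients. If a semisimple representation $\Gamma\to\GL_N(\bar\bQ)$ has bounded image, then it is conjugate into $\GL_N(\mathcal O)$ for $\mathcal O$ the integers of a finite extension of $\bQ_p$; using the $\GL_N$-invariance of $\tilde f_j$ to discard the conjugating matrix, the value of $f_j$ at the corresponding point satisfies $|f_j|_w\le|N_j|_w^{-1}=:C_j<\infty$, a bound independent of the representation. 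On the other hand, the closure of $q(\Sigma)$ has a component $W$ of dimension $\ge 1$; the $f_j$ separate points, so some $f_j$ is nonconstant on $W$, and as $q(\Sigma)$ contains a dense open of $W$, the set $f_j(q(\Sigma)(\bar\bQ))$ is infinite, hence cofinite in $\bA^1(\bar\bQ)$, hence contains an element of $w$-absolute value $>C_j$. Choosing $[\sigma]\in q(\Sigma)(\bar\bQ)$ realising it and $[\rho]\in\Sigma(\bar\bQ)$ with $q([\rho])=[\sigma]$: if $\rho$ had bounded image in $G(\bar\bQ_p)$ it would have bounded image in $\GL_N(\bar\bQ_p)$, so the uniform bound, applied to $\rho$ itself (a representative of the class $[\sigma]$), would give $|f_j([\sigma])|_w\le C_j$, a contradiction. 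Thus $\rho$ has unbounded image, contradicting the hypothesis; so $\Sigma$ is finite.

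The step I expect to need the most care is the quasi-finiteness of $q$, i.e.\ the fibre computation above; everything else is either formal — the constructibility and affine-geometry bookkeeping, and the equivalence of ``bounded image in $\GL_N(\bar\bQ_p)$'' with ``$w$-integral coordinates on $M_B(\Gamma,\GL_N)$'' — or standard (complete reducibility in characteristic zero, and the invariant theory of tuples of matrices under conjugation). An alternative avoiding the reduction to $\GL_N$ is to take an affine curve $C$ mapping nonconstantly to $\Sigma$, obtain a non-isotrivial representation $\Gamma\to G(K)$ over $K=\bar\bQ(C)$, note that nonisotriviality makes some $G$-invariant coordinate acquire a pole at a place $v$ of $K/\bar\bQ$, and specialise at $\bar\bQ$-points of $C$ lying $p$-adically close to $v$; but that route needs a \emph{uniform} $p$-adic bound on $G$-invariant functions evaluated on bounded subgroups of $G(\bar\bQ_p)$ coming from Bruhat--Tits theory (finiteness of the number of conjugacy classes of parahorics and their position relative to $G\hookrightarrow\GL_N$) — precisely the estimate that passing to $\GL_N$, where invariant functions are invariant under the ambient conjugation, makes automatic.
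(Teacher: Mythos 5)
Your proposal is correct in substance but follows a genuinely different route from the paper's. Both arguments share the same two pillars: (a) the reduction from $G$ to a general/special linear group via the finiteness of the fibres of $M_B(\Gamma,G)\to M_B(\Gamma,\GL_N)$, and (b) the observation that conjugation-invariant coordinate functions take $p$-adically integral (up to a fixed denominator) values on classes of bounded-image semisimple representations, uniformly in the finite extension of $\bQ_p$. Where you diverge is in how boundedness of coordinates forces zero-dimensionality. The paper introduces the closed locus $M(\Gamma,\SL_n)(\bar\bQ_p)^o$ where all trace functions are $\bar\bZ_p$-valued, propagates the hypothesis from $\Sigma(\bar\bQ)$ to the whole $\bQ$-Zariski closure via a $p$-adic density lemma for constructible sets (resting on Berkovich), and then shows that an affine $\bQ_p$-scheme all of whose $K$-points form compact sets is zero-dimensional (Noether normalization plus the finiteness of extensions of bounded degree). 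You instead argue by contraposition: an infinite constructible set has a positive-dimensional component, some invariant coordinate is nonconstant there, its image in $\bA^1(\bar\bQ)$ is cofinite and so contains elements of arbitrarily large $w$-adic absolute value, contradicting the uniform integrality bound. This is more elementary — it bypasses both the density lemma and the compactness criterion — and I find it a clean alternative. The one place you should be more careful is exactly the one you flag: the quasi-finiteness of $q$. The paper does not prove this either, citing Simpson (Corollary 9.18 of \emph{Moduli II}) for the finiteness of $M_B(\Gamma,G)\to M_B(\Gamma,H)$ when $G\to H$ has finite kernel; your double-coset description of the fibre is right, but the one-line justification via automorphism groups of $\overline{\sigma(\Gamma)}$ does not by itself yield finiteness of $G\backslash\{h:h\sigma(\Gamma)h^{-1}\subseteq G\}/Z_{\GL_N}(\sigma(\Gamma))$ — the clean way to see it is that $H^1(S,\gg)=0$ for $S$ linearly reductive in characteristic zero, so the $G$-orbits on the relevant open-and-closed piece of $\Hom(S,G)$ are open, hence finite in number. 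Either supply that argument or, as the paper does, cite Simpson.
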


\subsection*{Acknowledgements.}
I warmly thank Ben Bakker, Jeremy Daniel and Marco Maculan for interesting discussions.
  

\section{Holomorphic fibrations and Stein factorization}

A holomorphic fibration is a proper surjective holomorphic map $g \colon S \to T$ between two complex analytic spaces such that $g_\ast \cO_S = \cO_T$ (in particular, the fibres of a fibration are connected).

\begin{thm}[Stein, Cartan; cf. {\cite[Theorem 3]{Cartan}}]\label{Cartan}
Let $f_\alpha \colon X \to Y_\alpha$ be a collection of holomorphic maps between complex analytic spaces. Let $f \colon X \to  \prod_\alpha Y_\alpha$ be the product map. Assume that the connected components of the fibres of $f$ are compact. Then the equivalence relation $R$  defined by these connected components is proper and the quotient ringed space $X / R$ is a complex analytic space.
\end{thm}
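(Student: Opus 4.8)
The plan is to realise $X/R$ locally as the target of a Stein factorization, after reducing the (a priori infinite) family $(f_\alpha)$ to a finite subfamily near each fibre component. I equip $X/R$ with the quotient topology and with the structure sheaf $q_\ast\cO_X$, where $q\colon X\to X/R$ is the projection. Since every $R$-class, being a connected component of an analytic set $f^{-1}(y)$ (see Step 1), is a compact connected analytic subspace of $X$, every holomorphic function is constant on it; hence $q_\ast\cO_X$ is a well-defined sheaf of rings and $(X/R,q_\ast\cO_X)$ is a ringed space. The theorem then asserts that this ringed space is a complex analytic space, the properness of $R$ emerging along the way.

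\emph{Step 1 (topological preliminaries).} I would first record that $R\subset X\times X$ is closed and that, for $x\in X$, the class $K_x$ — the connected component of $f^{-1}(f(x))$ through $x$ — is a compact analytic subset of $X$: indeed $f^{-1}(f(x))=\bigcap_\alpha f_\alpha^{-1}(f_\alpha(x))$ is, by the Noetherian property of germs of analytic sets, locally a finite intersection, hence analytic, and a connected component of an analytic set is again analytic. Then, using that $X$ is locally compact and $K_x$ is a compact connected component of the closed set $f^{-1}(f(x))$, the relatively compact open neighbourhoods $U$ of $K_x$ with $\partial U\cap f^{-1}(f(x))=\varnothing$ form a neighbourhood basis of $K_x$; for such a $U$, the open set $W:=U\setminus f^{-1}\!\big(f(\partial U)\big)$ is a relatively compact, $R$-saturated neighbourhood of $K_x$ (if $w\in W$ then $f^{-1}(f(w))$ misses $\partial U$, so $f^{-1}(f(w))\cap U$ is open and closed in $f^{-1}(f(w))$ and contained in $U$, forcing $K_w\subset W$). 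The existence of these small saturated neighbourhoods yields that $X/R$ is locally compact Hausdorff and that $q$ is proper — i.e. $R$ is a proper equivalence relation — and it reduces the remaining problem to a neighbourhood of a single class, so that one may replace $X$ by such a $W$.

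\emph{Step 2 (reduction to finitely many maps).} Working on a relatively compact saturated $W$ as above, I would find a finite subset $\{\alpha_1,\dots,\alpha_n\}$ of the index set and (shrinking $W$) a holomorphic map $g:=(f_{\alpha_1},\dots,f_{\alpha_n})\colon W\to Y':=Y_{\alpha_1}\times\cdots\times Y_{\alpha_n}$ whose partition of $W$ into connected components of fibres coincides with that defined by $f$. Since the classes of $g$ are a priori coarser than those of $f$, the point is to show that on $W$ each connected component of a fibre of $g$ is contained in a single fibre of $f$. The input is that along the compact set $K_x$ the decreasing family of analytic germs $\big(\bigcap_{\alpha\in A}f_\alpha^{-1}(f_\alpha(x))\big)$, indexed by finite subsets $A$, stabilises by Noetherianity, and a compactness argument should propagate this stabilisation uniformly along $K_x$ and to all nearby fibres. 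This is the step I expect to be the main obstacle: turning the pointwise Noetherian stabilisation into one that is uniform over the compact class and robust for neighbouring fibres, so that a single finite subfamily controls $R$ on a whole neighbourhood.

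\emph{Step 3 (Stein factorization and conclusion).} With $W$ and $g$ in hand, choosing $U$ together with an auxiliary neighbourhood of $g(x)$ in $Y'$ with disjoint boundaries, I would check that $g$ restricts to a \emph{proper} holomorphic map $W\to V$ onto a suitable open $V\subset Y'$. By Remmert's proper mapping theorem $g(W)$ is analytic in $V$, and the classical Stein factorization provides a complex space $\widetilde Y$ and a factorization $g=\pi\circ r$ with $\pi$ finite and $r\colon W\to\widetilde Y$ a proper surjection with $r_\ast\cO_W=\cO_{\widetilde Y}$, whose fibres are exactly the connected components of the fibres of $g$. By Step 2 these are precisely the $R$-classes inside $W$, so $r$ is identified with $q|_W$ and $W/R\cong\widetilde Y$ is a complex analytic space with structure sheaf $q_\ast\cO_W$. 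Since the $W$'s are saturated, their images cover $X/R$ by open sets carrying the structure of a complex space, with structure sheaf $q_\ast\cO_X$; these charts are automatically compatible, so $X/R$ is a complex analytic space, which together with Step 1 proves the theorem.
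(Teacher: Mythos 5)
The paper offers no proof of this statement: it is quoted verbatim from Cartan (\emph{Quotients of complex analytic spaces}, Theorem~3), so there is no internal argument to compare yours against; I can only assess your reconstruction on its own terms. Your architecture — compact analytic classes, small saturated neighbourhoods $W=U\setminus f^{-1}(f(\partial U))$ giving properness of $R$, reduction to a finite subfamily, then the single-proper-map Stein factorization via Remmert — is exactly the right one, and Steps~1 and~3 are sound (Step~3 legitimately invokes the classical Stein factorization of one proper map as known input; the content of the theorem is precisely the reduction of an arbitrary family to that case).

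The genuine gap is the one you flag yourself in Step~2: pointwise Noetherian stabilisation of $\bigcap_{\alpha\in A}f_\alpha^{-1}(f_\alpha(x))$ along $K_x$ controls only the single fibre through $x$, and no amount of compactness of $K_x$ alone propagates the choice of $A$ to \emph{nearby} fibres, which is what you need for $g=f_A$ and $f$ to induce the same partition on a whole saturated neighbourhood. The standard way to close this is to move the stabilisation into $X\times X$: set $G_\alpha:=(f_\alpha\times f_\alpha)^{-1}(\Delta_{Y_\alpha})$, an analytic subset of $X\times X$, so that the graph of the relation ``$f(w)=f(w')$'' is $\bigcap_\alpha G_\alpha$. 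By the descending chain condition on germs of analytic sets, each point of the compact set $K_x\times K_x$ has a neighbourhood on which a finite sub-intersection already equals $\bigcap_\alpha G_\alpha$; finitely many such neighbourhoods cover $K_x\times K_x$, yielding a single finite $A$ and an open $\Omega\supset K_x\times K_x$ with $\bigcap_{\alpha\in A}G_\alpha\cap\Omega=\bigcap_\alpha G_\alpha\cap\Omega$. Choosing the saturated $W$ of Step~1 with $W\times W\subset\Omega$ (possible by the tube lemma), the fibres of $f_A|_W$ and of $f|_W$ coincide as subsets of $W$, hence so do the connected components, and this holds for \emph{all} fibres meeting $W$, not just the one through $x$. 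With this insertion your Step~2 is complete and the rest of the argument goes through; you should also note in Step~3 that $U$ must be re-chosen with $\partial U\cap g^{-1}(g(x))=\emptyset$ (legitimate, since $g^{-1}(g(x))$ and $f^{-1}(f(x))$ agree near $K_x$ by the stabilisation) before extracting the open $V$ over which $g$ is proper.
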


The proper surjective holomorphic map with connected fibers $X \to X/R$ is called the Stein factorization of the collection of holomorphic maps ${f_\alpha }$.


\section{Maximal pairs}\label{Maximal pairs}

\begin{defn}\label{definition_maximality}
Let $X$ be a connected complex algebraic variety and $\rho \colon \pi_1(X) \to \Gamma$ a homomorphism of groups.
The pair $(X, \rho)$ is called maximal if, for any connected complex algebraic variety $\bar X$ equipped with an open immersion $X \to \bar X$ and every holomorphic map $v \colon \Delta \to \bar X$ such that $v(\Delta^\ast) \subset X$ and $v(0) \notin X$, the composite homomorphism $\bZ = \pi_1(\Delta^\ast) \to \pi_1(X) \to \Gamma$ is not constant. 
\end{defn}

If $H \subset \pi_1(X)$ is a normal subgroup, we recover the definition of the introduction by considering the homomorphism $\pi_1(X) \to \pi_1(X) \slash H$.

\begin{rem}
It is easily seen that it is sufficient to check the condition in the definition only for one compactification $\bar X$ of $X$. Also, keeping the notations of the definition, it follows immediately that if the pair $(X, \rho)$ is maximal, then for every holomorphic map $v \colon \Delta \to \bar X$ such that $v(\Delta^\ast) \subset X$ and $v(0) \notin X$, the composite homomorphism $\bZ = \pi_1(\Delta^\ast) \to \pi_1(X) \to \Gamma$ has an infinite image. 
\end{rem}

\begin{prop}\label{pull-back_maximal}
Let $f \colon Y \to X$ be a proper algebraic morphism between two connected complex algebraic varieties. Let $\rho \colon \pi_1(X) \to \Gamma$ be a homomorphism of groups. 
\begin{itemize}
\item If the pair $(X, \rho)$ is maximal, then the pair $(Y, f^{-1} \rho)$ is maximal.
\item If $f$ is surjective and the pair $(Y, f^{-1} \rho)$ is maximal, then the pair $(X, \rho)$ is maximal.
\end{itemize}
\end{prop}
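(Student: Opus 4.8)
The plan is to deduce both implications from the defining property of maximal pairs by transporting test discs along $f$ and along a proper extension of $f$ to compactifications. First I would set up the geometry: fix a compactification $\bar X$ of $X$, pick any compactification $\bar Y_0$ of $Y$, and let $\bar Y$ be the closure in $\bar Y_0 \times \bar X$ of the graph of the composite $Y \xrightarrow{f} X \hookrightarrow \bar X$, with $\bar f \colon \bar Y \to \bar X$ induced by the second projection. Since that graph is closed in $Y \times \bar X$ it is the trace on $\bar Y$ of the open set $Y \times \bar X$, hence open and dense in $\bar Y$; so $\bar Y$ is a connected compactification of $Y$, the map $\bar f$ is proper (a morphism of complete varieties) and extends $f$, and $\bar f$ is surjective if $f$ is. The one preliminary fact I would isolate is the identity $\bar f^{-1}(X) = Y$: the restriction $\bar f^{-1}(X) \to X$ is proper, $f \colon Y \to X$ is proper, and $Y \hookrightarrow \bar f^{-1}(X)$ is an open immersion, so it is proper, hence a closed immersion; then $\bar f^{-1}(X) \setminus Y$ is open in $\bar Y$ and disjoint from the dense subset $Y$, hence empty. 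By the Remark following Definition~\ref{definition_maximality}, it then suffices to test maximality of $(Y, f^{-1}\rho)$ using $\bar Y$, and maximality of $(X, \rho)$ using $\bar X$.

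For the first bullet I would argue as follows. Assuming $(X,\rho)$ maximal, take $v \colon \Delta \to \bar Y$ with $v(\Delta^\ast) \subset Y$ and $v(0) \notin Y$, and push it forward to $\bar f \circ v \colon \Delta \to \bar X$. For $t \neq 0$ we have $\bar f(v(t)) = f(v(t)) \in X$, and $\bar f(v(0)) \notin X$ (otherwise $v(0) \in \bar f^{-1}(X) = Y$), so $\bar f \circ v$ satisfies the hypotheses in Definition~\ref{definition_maximality} for $(X, \rho)$ and the induced homomorphism $\pi_1(\Delta^\ast) \to \pi_1(X) \to \Gamma$ is non-constant. Since $(\bar f \circ v)|_{\Delta^\ast} = f \circ (v|_{\Delta^\ast})$ factors through $Y$, that homomorphism is exactly $\pi_1(\Delta^\ast) \to \pi_1(Y) \xrightarrow{f^{-1}\rho} \Gamma$, which is therefore non-constant; hence $(Y, f^{-1}\rho)$ is maximal.

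For the second bullet I would go in the opposite direction and pull back a ramified reparametrization of a test disc. Assuming $f$ surjective and $(Y, f^{-1}\rho)$ maximal, take $v \colon \Delta \to \bar X$ with $v(\Delta^\ast) \subset X$, $v(0) \notin X$, and form the analytic fibre product $W := \Delta \times_{\bar X} \bar Y$ with projections $\pi \colon W \to \Delta$ (proper and surjective, by base change) and $q \colon W \to \bar Y$. I would then pick an irreducible component $W_1$ of $W$ dominating $\Delta$, a point $w_0 \in W_1$ over $0$, and an irreducible curve germ $C \subset (W_1, w_0)$ with $C \not\subset \pi^{-1}(0)$ — available by intersecting $W_1$ with generic hyperplanes through $w_0$ in a local embedding, since $\pi^{-1}(0) \cap W_1$ is a proper analytic subset of the irreducible $W_1$. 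Normalizing $C$ gives $n \colon (\Delta', 0) \to (C, w_0)$; letting $\sigma \colon \Delta' \to W$ be the composite of $n$ with $C \hookrightarrow W$, after reparametrizing $\Delta'$ I may assume $\pi \circ \sigma$ is $z \mapsto z^k$ for some $k \geq 1$. Set $\tilde v := q \circ \sigma \colon \Delta' \to \bar Y$; the universal property of $W$ gives $\bar f(\tilde v(z)) = v(z^k)$. Hence for $z \neq 0$, $\bar f(\tilde v(z)) = v(z^k) \in X$ forces $\tilde v(z) \in \bar f^{-1}(X) = Y$, while $\bar f(\tilde v(0)) = v(0) \notin X$ forces $\tilde v(0) \notin Y$. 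So (after rescaling $\Delta'$ to the unit disc) $\tilde v$ satisfies the hypotheses in Definition~\ref{definition_maximality} for $(Y, f^{-1}\rho)$, and $\pi_1((\Delta')^\ast) \xrightarrow{\tilde v_\ast} \pi_1(Y) \xrightarrow{f^{-1}\rho} \Gamma$ is non-constant. Tracing through the identifications, this homomorphism is $\rho \circ v_\ast \circ m_k$ where $m_k \colon \pi_1((\Delta')^\ast) \to \pi_1(\Delta^\ast)$ is multiplication by $k$ (induced by $z \mapsto z^k$); writing $\gamma$ for a generator of $\pi_1(\Delta^\ast)$, we get $\rho(v_\ast(\gamma))^k \neq e$, hence $\rho(v_\ast(\gamma)) \neq e$, so $\pi_1(\Delta^\ast) \to \pi_1(X) \xrightarrow{\rho} \Gamma$ is non-constant and $(X,\rho)$ is maximal.

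I expect the second bullet to contain the only real difficulty: producing, from a purely holomorphic test disc $v$ in $\bar X$, a holomorphic disc in $\bar Y$ that lies over some $z \mapsto v(z^k)$ and reaches the boundary $\bar Y \setminus Y$. One cannot in general lift $v$ through a prescribed point of the fibre $\bar f^{-1}(v(0))$ (already false for a blow-up), which is why the argument must detour through an irreducible component of the fibre product and a curve selection through an arbitrary point of the central fibre; and it is precisely the identity $\bar f^{-1}(X) = Y$ — rather than the a priori weaker inclusion $Y \subseteq \bar f^{-1}(X)$ — that makes such a lift automatically land in $Y$ over the punctured disc and outside $Y$ at the origin. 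The remaining ingredients (existence of the extension $\bar f$, the curve-selection lemma, and the elementary bookkeeping with $\pi_1$ of punctured discs) are standard.
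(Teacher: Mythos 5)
Your proof is correct and follows essentially the same route as the paper's: extend $f$ to a proper map of compactifications with $\bar f^{-1}(X)=Y$, push test discs forward for the first bullet, and for the second produce a holomorphic disc in $\bar Y$ lying over a ramified reparametrization $z\mapsto v(z^k)$ of the given disc, centred in $\bar Y\setminus Y$ and mapping the punctured disc into $Y$. The only difference is presentational: you package the lift via the fibre product $\Delta\times_{\bar X}\bar Y$ together with an explicit curve-selection and normalization step, whereas the paper locates a boundary point of $\bar Y\setminus Y$ in the closure of $f^{-1}\left(v(\bar\Delta\setminus\{0\})\right)$ by a closedness argument and then takes a disc through it.
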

\begin{proof}
Since the algebraic map $f \colon Y \to X$ is proper, there exist $\bar Y$ and $\bar X$ some connected compactifications of $Y$ and $X$ respectively, such that $f$ extends to an algebraic map $\bar f \colon \bar Y \to \bar X$ with ${\bar f}^{-1}(X) = Y$. \\

Assume that the pair $(X, \rho)$ is maximal. Let $v \colon \Delta \to \bar Y$ be a holomorphic map such that $v(\Delta^\ast) \subset Y$ and $v(0) \notin Y$. Then, the composite map $\bar f \circ v \colon \Delta \to \bar X$ is a holomorphic map such that $(\bar f \circ v)(\Delta^\ast) \subset X$ and $(\bar f \circ v)(0) \notin X$. Since the pair $(X, \rho)$ is maximal, the composite homomorphism $\bZ = \pi_1(\Delta^\ast) \to \pi_1(Y) \to \pi_1(X) \to \Gamma$ has infinite image. This shows that the pair $(Y, f^{-1} \rho)$ is maximal. \\

Conversely, assume that $f$ is surjective and that the pair $(Y, f^{-1} \rho)$ is maximal. It is harmless to assume that $\bar f$ is surjective too. Let $v \colon \Delta \to \bar X$ be a holomorphic map such that $v(\Delta^\ast) \subset X$ and $v(0) \notin X$. By shrinking $\Delta$, one may assume that $v$ is injective and extends continuoulsy to $\bar \Delta$.\\
The preimage of $v(\bar \Delta \backslash \{0\})$ by $f$ is not closed in the preimage of $v(\bar \Delta)$ by $\bar f$, since otherwise its image $f \left( f^{-1} \left( v(\bar \Delta \backslash \{0\}) \right) \right) = \bar \Delta \backslash \{0\}$ by the proper map $\bar f$ would be closed in $\bar f \left({\bar f}^{-1} \left( v(\bar \Delta) \right) \right) = \bar \Delta$. Therefore, there exists $y \in \bar Y - Y$ in the closure of the preimage of $v(\bar \Delta \backslash \{0\})$. Let $w \colon \Delta \to \bar Y$ be a holomorphic map such that $w(\Delta^\ast) \subset f^{-1}(v(\Delta^\ast))$ and $w(0) = y$. Therefore, we get a commutative diagram of holomorphic maps:
\[
\begin{tikzcd}
\Delta \arrow[d, "g"] \arrow[r, "w"] & \bar Y \arrow[d, "\bar f"] \\
\Delta \arrow[r, "v"]& \bar X
\end{tikzcd}
\]
with $g(0) = 0$. Moreover, up to shrinking both $\Delta$'s, one can assume that $g^{-1}(0) = \{0 \}$.  By assumption, the image of the homomorphism $\rho \circ f_\ast \circ (w_{|\Delta^\ast})_\ast  \colon \pi_1(\Delta^\ast) \to \Gamma$ is infinite. It follows that the image of the homomorphism $\rho \circ (v_{|\Delta^\ast})_\ast \colon \pi_1(\Delta^\ast)  \to \Gamma$ is infinite. 
\end{proof}

\begin{prop}\label{proper_period_map}
Let $X$ be a connected smooth complex algebraic variety equipped with a variation of Hodge structure whose monodromy representation $\rho$ has discrete image $\Gamma$. Let $\tilde{X}^\rho \to \cD$ be the corresponding period map. Then the following are equivalent:
\begin{enumerate}
\item The pair $(X, \rho)$ is maximal, 
\item The period map $\tilde{X}^\rho \to \cD$ is proper.
\end{enumerate}
\end{prop}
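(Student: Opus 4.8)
The strategy is to relate both conditions to the local behaviour of the period map near the boundary of a compactification, using the classical theory of degenerations of Hodge structures (nilpotent orbit theorem, Borel's theorem on quasi-unipotence of local monodromy). Fix a smooth projective compactification $\bar X$ of $X$ with $D = \bar X \setminus X$ a simple normal crossings divisor. By Borel's theorem, the local monodromy of the variation around each branch of $D$ is quasi-unipotent; after passing to a finite étale cover (which changes neither maximality, by Proposition \ref{pull-back_maximal}, nor the properness of the period map, since the cover $\tilde X^\rho$ is unchanged) we may assume the local monodromies are unipotent. The period map then extends to a holomorphic map $\bar X \to \bar{\cD}$ to a suitable partial compactification of $\cD \slash \Gamma$ in the Borel–Serre or Kato–Usui sense, but it is cleaner to work directly with punctured polydisc neighbourhoods.

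\emph{(2) $\Rightarrow$ (1).} Suppose the pair $(X,\rho)$ is not maximal: there is a compactification $\bar X$ and a holomorphic disc $v \colon \Delta \to \bar X$ with $v(\Delta^\ast) \subset X$, $v(0) \notin X$, such that $\bZ = \pi_1(\Delta^\ast) \to \Gamma$ has finite, hence (after the unipotent reduction above) trivial, image. Then $v^\ast \cL$ is a trivial local system on $\Delta^\ast$, so it extends to a variation of Hodge structure on all of $\Delta$; the associated period map $\Delta^\ast \to \cD$ extends holomorphically across $0$ (no monodromy, so the nilpotent orbit is constant), giving a bounded holomorphic curve in $\cD$ whose $\Gamma$-image accumulates — as $v(t) \to v(0) \in D$ — at a point of $\bar X$ not in $X$. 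Composing with the period map $\tilde X^\rho \to \cD$, one produces a divergent sequence in $\tilde X^\rho$ (its image in $X$ leaves every compact set, since it approaches $v(0) \in D$) whose images in $\cD$ converge. This contradicts properness of $\tilde X^\rho \to \cD$.

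\emph{(1) $\Rightarrow$ (2).} Conversely, assume the period map $\Phi \colon \tilde X^\rho \to \cD$ is not proper. Then there is a sequence $x_n \in \tilde X^\rho$ with $\Phi(x_n)$ converging in $\cD$ but $x_n$ leaving every compact subset of $\tilde X^\rho$; projecting to $X$, after modifying by $\Gamma$ we get $y_n \in X$ with $y_n \to y_\infty \in D \subset \bar X$ and with the local period data converging. The nilpotent orbit theorem controls the period map on a punctured polydisc neighbourhood $(\Delta^\ast)^k \times \Delta^{\ell}$ of $y_\infty$: writing the lift as $z \mapsto \exp\!\big(\sum_j \tfrac{\log z_j}{2\pi i} N_j\big)\cdot \psi(z)$ with $\psi$ holomorphic across the boundary and $N_j$ the (nilpotent) local monodromy logarithms, convergence of $\Phi(y_n)$ in $\cD$ together with $y_n \to y_\infty$ forces (by the $\SL_2$-orbit / several-variables $\SL_2$ theorem estimates of the growth of $d_\cD$) that all the $N_j$ governing the directions in which $y_n$ approaches $D$ vanish — equivalently, the local monodromy in those directions is trivial. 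Extracting a holomorphic disc $v \colon \Delta \to \bar X$ through $y_\infty$ tangent to such a direction and transverse to $D$, the composite $\bZ = \pi_1(\Delta^\ast) \to \Gamma$ is then trivial, contradicting maximality of $(X,\rho)$.

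The main obstacle is the second implication: turning ``not proper'' into a genuine holomorphic test disc with trivial local monodromy requires the precise multivariable asymptotics of the Hodge metric near the boundary (the norm estimates of Cattani–Kaplan–Schmid), in order to rule out the possibility that $\Phi(y_n)$ converges while $y_n$ approaches a stratum of $D$ along which \emph{some} $N_j \neq 0$; one must show that a nonzero $N_j$ always forces $d_\cD(\Phi(y_n), \Phi(y_m)) \to \infty$ or pushes $\Phi(y_n)$ out of every compact set of $\cD$. Handling the case where $y_n$ approaches $D$ without converging in $\bar X$, and the bookkeeping of passing between $X$, $\tilde X^\rho$ and the finite unipotent-reduction cover, are the remaining technical points.
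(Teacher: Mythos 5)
Your overall strategy---reducing both conditions to the asymptotics of the period map near a boundary divisor---is sound, and your direction $(2)\Rightarrow(1)$ is essentially complete. But the hard implication $(1)\Rightarrow(2)$ is left as a sketch at exactly the point where all the work lies: the claim that a nonzero local monodromy logarithm $N_j$ forces the period images to leave every compact subset of $\cD$ as $y_n$ approaches the corresponding branch of $D$ \emph{is} the content of Griffiths' properness criterion, and you do not prove it. The paper does not prove it either; it reduces to it. After passing to a finite \'etale cover so that $\Gamma$ is torsion-free (Selberg's lemma; harmless by Proposition \ref{pull-back_maximal}), the period map $\tilde{X}^\rho \to \cD$ becomes the base change of the induced map $X \to \Gamma\backslash\cD$ along the covering $\cD \to \Gamma\backslash\cD$, so the two maps are proper simultaneously, and properness of $X \to \Gamma\backslash\cD$ is equivalent to maximality by \cite[Theorem 9.5]{GriffithsIII}. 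If you intend your argument to be self-contained you must actually carry out the several-variable $\SL_2$-orbit/norm estimates you allude to; otherwise you should cite the criterion, in which case the reduction to the quotient map is the cleaner route.

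There is also a concrete gap in $(1)\Rightarrow(2)$: you assert that a divergent sequence $x_n \in \tilde{X}^\rho$ with convergent period images must project to a sequence $y_n$ accumulating on $D$. This is not automatic: $x_n$ can diverge ``vertically'', i.e.\ $y_n$ can remain in a compact subset of $X$ while $x_n$ runs through infinitely many sheets of the covering, the lifts differing by deck transformations $\gamma_n \in \Gamma$ tending to infinity. This case is excluded by the discreteness of $\Gamma$ together with the properness of the $\Gamma$-action on $\cD$ (point stabilizers in $\cD$ are compact, so only finitely many $\gamma \in \Gamma$ carry a given compact set into another), but that is an extra argument you must supply---and it is precisely the step that the paper's passage to the quotient $X \to \Gamma\backslash\cD$ absorbs automatically.
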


\begin{proof} 
Since Proposition \ref{pull-back_maximal} shows that it is harmless to replace $X$ by a finite étale cover, one can assume thanks to Selberg's lemma that $\Gamma$ is torsion-free. In that case, the period map $\tilde{X}^\rho \to \cD$ induces a holomorphic map on the quotients $X \to   \Gamma \backslash \cD$, and the former is the base-change of the latter along the covering map $\cD \to  \Gamma \backslash \cD$. Therefore, the period map $\tilde{X}^\rho \to \cD$ is proper if and only if the induced map $X \to   \Gamma \backslash \cD$ is proper. But the latter is proper if and only if the pair $(X,\rho)$ is maximal by Griffiths' criterion \cite[Theorem 9.5]{GriffithsIII} (see also \cite[section 2.2]{BBT-mixed-Griffiths}).
\end{proof}

\begin{prop}[Compare with {\cite[Proposition 2.4]{BBT-mixed-Griffiths}}]\label{maximal extension}
Let $X$ be a smooth algebraic variety. Let $\rho \colon \pi_1(X) \to \Gamma$ be a homomorphism of groups with torsion-free image. Then there exists $i \colon X \hookrightarrow X^\prime$ an open embedding in a smooth algebraic variety and a homomorphism $\rho^\prime \colon \pi_1(X^\prime) \to \Gamma$ such that $i ^{-1} \rho^\prime = \rho$ and the pair $(X^\prime, \rho^\prime)$ is maximal. 
\end{prop}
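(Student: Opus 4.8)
The plan is to produce $(X',\rho')$ as a partial toroidal compactification of $X$. By resolution of singularities, fix a smooth projective compactification $\bar X \supseteq X$ whose boundary $D := \bar X \setminus X$ is a simple normal crossings divisor. First I would perform a toroidal modification $\pi \colon \bar X' \to \bar X$ (an isomorphism over $X$, with $\bar X'$ smooth projective and $D' := \bar X' \setminus X$ again simple normal crossings), and then set $X' := \bar X' \setminus \bigcup\{E \subseteq D' : \rho(\gamma_E) \neq 1\}$, i.e.\ add back to $X$ exactly those boundary components along which $\rho$ has trivial local monodromy. Then $X' \setminus X$ is a simple normal crossings divisor in the smooth variety $X'$ whose meridians lie in $\ker \rho$, so $\rho$ factors through $\pi_1(X')$, yielding $\rho' \colon \pi_1(X') \to \Gamma$ with $i^{-1}\rho' = \rho$ for the open immersion $i \colon X \hookrightarrow X'$. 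Everything then hinges on choosing $\pi$ so that $(X',\rho')$ is maximal, and by the Remark after Definition~\ref{definition_maximality} it suffices to test maximality against the single compactification $\bar X'$.

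The right $\pi$ is dictated by a subspace arrangement on the cone complex $\Sigma(\bar X, D)$. To each stratum $S$ of $D$ — a connected component of some $D_{i_1} \cap \dots \cap D_{i_k}$, with cone $\sigma_S$ in the lattice $\bZ^k$ — the nearby fundamental group of $X$ is canonically $\bZ^k$, with basis the meridians $\gamma_{D_{i_1}},\dots,\gamma_{D_{i_k}}$; these commute, so $\rho$ restricts to $\phi_S \colon \bZ^k \to \Gamma$, well defined up to conjugation, and in particular $K_S := \ker \phi_S$ is a canonical sublattice. Here the hypothesis enters: the image of $\phi_S$, being a subgroup of $\Gamma$, is torsion free, so $\bZ^k/K_S$ is torsion free, $K_S$ is a saturated sublattice, and $K_{S,\bR} := K_S \otimes \bR$ meets $\bZ^k$ in exactly $K_S$. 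As $\phi_{S'}$ restricts $\phi_S$ when $S \subseteq \overline{S'}$, the $K_{S,\bR}$ patch to a rational subspace arrangement, and I would take $\pi$ to be any \emph{smooth} toroidal modification refining it — i.e.\ whose subdivision has the property that every cone $\tau$ contained in some $\sigma_S$ meets $K_{S,\bR}$ in a face of $\tau$ (equivalently, the finitely many rational functionals cutting out $K_{S,\bR}$ each have constant sign on $\tau$). Such a $\pi$ exists by standard toric geometry: subdivide each cone along its hyperplane arrangement, then resolve to a smooth subdivision, compatibly along faces.

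With $\pi$ so chosen, maximality becomes a local computation at the boundary. Let $v \colon \Delta \to \bar X'$ be holomorphic with $v(\Delta^\ast) \subseteq X'$ and $v(0) \notin X'$. Then $v(0)$ lies on boundary components $E_1,\dots,E_r$ of $D'$ that were not added back (so $\rho(\gamma_{E_i}) \neq 1$; here $r \geq 1$ because $v(0) \notin X'$) together with components $E_{r+1},\dots,E_k$ that were. Let $S$ be the stratum of $D$ over which $v(0)$ lies, so that the rays of $E_1,\dots,E_k$ are rays of the subdivision of $\sigma_S$, and write $\phi := \phi_S$. In a polydisk chart with $E_i = \{z_i = 0\}$, the condition $v(\Delta^\ast) \subseteq X'$ forces the vanishing order $m_i$ of $z_i \circ v$ at $0$ to be $\geq 1$ for $i \leq r$, and the image under $\rho'$ of a small loop of $v$ around $0$ equals $\phi(w)$ with $w := \sum_{i \leq r} m_i e_i$, where $e_i$ is the primitive generator of the ray of $E_i$. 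Now $w$ lies in the relative interior of the face $\tau$ spanned by the rays of $E_1,\dots,E_r$; by the defining property of $\pi$, $\tau \cap K_{S,\bR}$ is a face of $\tau$, and a \emph{proper} one, since otherwise each $e_i$ ($i \leq r$) would lie in $K_{S,\bR} \cap \bZ^k = K_S$, forcing $\rho(\gamma_{E_i}) = \phi(e_i) = 1$. Hence the relative interior of $\tau$ avoids $K_{S,\bR}$, so $w \notin K_S$ and $\phi(w) \neq 1$: the loop of $v$ around $0$ has nontrivial image in $\Gamma$, which is precisely maximality of $(X',\rho')$.

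The step I expect to be the true obstacle is recognising that the modification $\pi$ cannot be dispensed with: simply adding back to $X$ the components of $D$ with trivial monodromy need not produce a maximal pair, because the local monodromies of $\rho$ around distinct boundary components through a point may multiply to the identity along a curve probing a deeper stratum, so one must first blow up to separate such configurations. Turning this into the precise combinatorial requirement above — refine a rational cone complex so that a prescribed rational subspace arrangement is cut out by faces — is elementary once formulated, but it rests on the saturatedness of the $K_S$, and hence on the torsion-freeness of the image of $\rho$.
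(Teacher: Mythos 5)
Your proposal is correct and follows essentially the same route as the paper: both construct a toroidal (monomial) modification of a smooth SNC compactification whose subdivision makes the real span of $\ker(\rho|_{\pi_1(P^\ast)})$ a union of faces, then define $X'$ by adding back exactly the boundary components with trivial local monodromy. Your write-up is in fact more explicit than the paper's on the two points it leaves implicit — where torsion-freeness of the image enters (saturatedness of the kernels $K_S$, so that rays with nontrivial monodromy avoid $K_{S,\bR}$) and the local winding-number computation verifying maximality.
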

\begin{proof}
Let $X \subset \bar X$ be a smooth compactification such that $\bar X \backslash X = D$ is a normal crossing divisor. Fix a covering of $\bar X$ by polydisks $P \simeq \Delta^{n_p}$ such that $P^\ast := P \cap X \simeq (\Delta^\ast)^{r_P} \times \Delta^{s_P}$. For any polydisk $P$, consider the positive octant $\bR^{r_P}_ {\geq 0}$. The kernel of the restriction of $\rho$ to $\pi_1(P^\ast) = \bZ^{r_P}$ defines an integral linear subspace of $\bR^{r_P}$, and we denote by $K$ its intersection with $\bR^{r_P}_ {\geq 0}$. We may find an integral simplicial subdivision of the standard fan on $\bR^{r_P}_ {\geq 0}$ for which $K$ is
a union of facets. This subdivision corresponds to a (global) monomial modification $\bar X_P \to \bar X$ such that the restriction of $\rho$ to the preimage of $P^\ast$ is maximal, once we extend the pull-back representation over the boundary components of $\bar X_P \backslash X$ with no monodromy. Notice that any further monomial modification $\bar Z \to \bar X_P$ will also satisfy the condition above $P$. Thus, taking $\bar Z$ to be a monomial modification of $\bar X$ that dominates each of the $\bar X_P$ and extending the representation over the boundary components with no monodromy, we get a maximal representation.
\end{proof}

\section{Shafarevich morphisms}

\begin{defn}\label{H-Shafarevich morphism}
Let $X$ be a normal connected complex algebraic variety and $H \subset \pi_1(X)$ be a normal subgroup. A Shafarevich morphism for the pair $(X,H)$ is the data of a connected complex analytic space $\Sh_X^H$ and a holomorphic fibration $\sh_X^H \colon X \rightarrow \Sh_X^H$ such that the following property holds:\\
For any connected proper complex algebraic variety $Z$ equipped with an algebraic morphism $f \colon Z \to X$, $\sh_X^H ( f(Z) )$ is a point if and only if the image of $\pi_1(Z)$ in $\pi_1(X) \slash H$ is finite.
\end{defn}
Since a fibration is determined by the equivalence relation it induces on the source, a Shafarevich morphism for the pair $(X,H)$, if existing, is unique up to unique isomorphism. \\

If $\Gamma$ is a group and $\rho\colon \pi_1(X) \to \Gamma$ is a homomorphism, then a Shafarevich morphism $\sh_X^\rho \colon X \rightarrow \Sh_X^\rho$ for the pair $(X,\rho)$ is by definition a Shafarevich morphism for the pair $(X, \ker{\rho})$. Similarly, if $\Sigma = \{ \cL_i\}_{i\in I}$ is a collection of complex local systems on $X$, then
a $\Sigma$-Shafarevich morphism $\sh_X^\Sigma \colon X \rightarrow \Sh_X^\Sigma$ is by definition a Shafarevich morphism for the pair $(X, H)$, where $H$ is the intersection of the kernels of the monodromy representations of the $\cL_i$'s.

\begin{thm}\label{equivalent_Shafa}
Let $X$ be a connected normal complex algebraic variety. Let $H \subset \pi_1(X)$ be a normal subgroup and ${\tilde{X}^H} \to X$ the corresponding Galois \'etale cover. The following assertions are equivalent:
\begin{enumerate}
\item  There exists a normal complex space $\tilde{\Sh}_X^H$ with no positive-dimensional compact analytic subspace and a holomorphic fibration $\tilde{\sh}_X^H \colon \tilde{X}^H \rightarrow \tilde{\Sh}_X^H$.
\item There exists a connected complex analytic space $\Sh_X^H$ and a holomorphic fibration $\sh_X^H \colon X \rightarrow \Sh_X^H$ such that the following property holds:\\
For any connected proper complex algebraic variety $Z$ equipped with an algebraic morphism $f \colon Z \to X$, $\sh_X^H ( f(Z) )$ is a point if and only if the image of $\pi_1(Z)$ in $\pi_1(X) \slash H$ is finite.
\item There exists a connected complex analytic space $\Sh_X^H$ and a holomorphic fibration $\sh_X^H \colon X \rightarrow \Sh_X^H$ such that the following properties hold:\\
For every (connected) fiber $F$ of $\sh_X^H$, the image of $\pi_1(F)$ in $\pi_1(X) \slash H$ is finite; for every irreducible closed complex algebraic subvariety $Z \subset X$ with normalization $\bar Z \to Z$, if the image of $\pi_1(\bar Z)$ in $\pi_1(X) \slash H$ is finite, then $\sh_X^H (Z)$ is a point.
\item There exists a collection of holomorphic maps $\varphi_\alpha \colon {\tilde{X}^H} \to S_\alpha$ such that the connected components of the fibers of the map $\varphi := \prod_\alpha \varphi_\alpha \colon {\tilde{X}^H} \to \prod_{\alpha} S_\alpha$ are compact, and such that any connected compact analytic subspace of ${\tilde{X}^H}$ is contained in a fiber of $\varphi$.
\end{enumerate}
\end{thm}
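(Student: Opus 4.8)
I would organise the four assertions into the two ``complex-analytic'' statements $(1)$ and $(4)$, which live on the Galois \'etale cover $\tilde{X}^H$, and the two ``$\pi_1$-theoretic'' statements $(2)$ and $(3)$, which live on $X$ itself, and then bridge the two groups through the deck action of $\Gamma := \pi_1(X)\slash H$. A remark used everywhere: a holomorphic fibration from $\tilde{X}^H$ onto a complex space with no positive-dimensional compact analytic subspace is unique once it exists --- if $\pi\colon \tilde{X}^H \to S$ and $\pi^\prime\colon \tilde{X}^H \to S^\prime$ are two such, then each (compact, connected) fibre of $\pi$ is sent by $\pi^\prime$ to a point, so $\pi^\prime$ factors holomorphically through $\pi$, and symmetrically --- so any such fibration is automatically $\Gamma$-equivariant.

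\textbf{The equivalence $(1)\Leftrightarrow(4)$.} This is essentially Theorem \ref{Cartan}. For $(1)\Rightarrow(4)$ take the one-element collection $\{\tilde{\sh}_X^H\}$: its fibres are compact and connected, and if $W\subset \tilde{X}^H$ is a connected compact analytic subspace then $\tilde{\sh}_X^H(W)$ is, by Remmert's proper mapping theorem, a connected compact analytic subspace of $\tilde{\Sh}_X^H$, hence a point, so $W$ lies in a fibre. For $(4)\Rightarrow(1)$, apply Theorem \ref{Cartan} to $\{\varphi_\alpha\}$ to get a holomorphic fibration $\tilde{\sh}_X^H\colon \tilde{X}^H \to \tilde{X}^H\slash R =: \tilde{\Sh}_X^H$; the source is normal (being \'etale over the normal variety $X$), hence so is $\tilde{\Sh}_X^H$. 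If the latter carried a positive-dimensional compact analytic subspace it would carry an irreducible compact curve $C$; an irreducible component $W$ of $(\tilde{\sh}_X^H)^{-1}(C)$ dominating $C$ is compact, hence (hypothesis of $(4)$) contained in a fibre of $\varphi$, hence in a single $R$-class, so $C = \tilde{\sh}_X^H(W)$ is a point --- a contradiction.

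\textbf{The bridge $(1)\Leftrightarrow(2)$.} Assume $(1)$. By the uniqueness remark $\tilde{\sh}_X^H$ is $\Gamma$-equivariant, and $\Gamma$ acts \emph{properly discontinuously} on $\tilde{\Sh}_X^H$: if $\gamma_n\to\infty$, $s_n\to s$ and $\gamma_n s_n \to s^\prime$ there, pick $x_n$ in the fibre over $s_n$; properness of $\tilde{\sh}_X^H$ makes $(x_n)$ and $(\gamma_n x_n)$ subconverge, contradicting proper discontinuity of $\Gamma$ on $\tilde{X}^H$. Hence $\Sh_X^H := \tilde{\Sh}_X^H\slash\Gamma$ is a normal connected complex analytic space and $\sh_X^H\colon X = \tilde{X}^H\slash\Gamma \to \Sh_X^H$ is a holomorphic fibration with compact fibres (a fibre of $\sh_X^H$ is $\tilde{\sh}_X^H{}^{-1}(s)$ modulo a finite stabilizer). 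For a connected proper algebraic $f\colon Z\to X$, let $W^\prime$ be a connected component of (the normalization of) $Z\times_X \tilde{X}^H$, a covering of the normalization of $Z$ whose degree is the order of the image of $\pi_1(Z)$ in $\Gamma$: if $\sh_X^H(f(Z))$ is a point then, $\Gamma s_0$ being discrete and closed, the image of $W^\prime$ in $\tilde{X}^H$ lies in a single compact fibre of $\tilde{\sh}_X^H$, so $W^\prime$, closed in $Z\times_X \tilde{X}^H \subset Z\times\tilde{X}^H$ and thus in $Z$ times a compact set, is compact, whence the degree is finite; conversely a finite such degree makes $W^\prime$ compact, so $\tilde{\sh}_X^H$ collapses its image and $\sh_X^H(f(Z))$ is a point. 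For $(2)\Rightarrow(1)$ one Stein-factorizes the (non-proper) composite $\tilde{X}^H \to X \to \Sh_X^H$: by Chow's theorem a compact analytic subspace of the quasi-projective $X$ is algebraic, so $(2)$ applies to the fibres of $\sh_X^H$ and forces their $\pi_1$ to have finite image in $\Gamma$; hence the composite has compact connected fibre-components and Theorem \ref{Cartan} produces $\tilde{\Sh}_X^H$. Absence of positive-dimensional compact subspaces there follows as in $(4)\Rightarrow(1)$ once one checks, via normalization together with $(2)$ applied to the algebraic (Chow again) image in $X$ of the relevant compact subspace, that every compact connected $W\subset\tilde{X}^H$ has image collapsed by $\sh_X^H$.

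\textbf{The equivalence $(2)\Leftrightarrow(3)$, and the main obstacle.} Conditions $(2)$ and $(3)$ are two packagings of the same fibration on $X$; passing between them is a Koll\'ar-type argument --- a compact fibre of $\sh_X^H$ is algebraic by Chow, so $(2)$ yields the first clause of $(3)$, while the second clause of $(3)$ is $(2)$ applied to normalizations --- together with the facts that $\pi_1$ is insensitive to non-reducedness and is controlled under finite covers and normalization. I expect the main obstacle to be precisely this covering-space bookkeeping: tracking normalizations, non-reduced and reducible fibres, and the failure of $\tilde{X}^H\to X$ to be proper, all while verifying the universal property for \emph{all} proper algebraic $Z\to X$ and not merely for closed subvarieties. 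The purely complex-analytic inputs --- Theorem \ref{Cartan}, Remmert's theorem, properness of Stein factorizations, and proper discontinuity of the quotient $\Gamma$-action --- are comparatively routine.
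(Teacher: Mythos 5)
Your proposal follows essentially the same route as the paper: Cartan--Stein factorization for $(1)\Leftrightarrow(4)$, descent and ascent along the $\Gamma$-action on $\tilde{X}^H$ (with the uniqueness/equivariance remark and the analysis of components of $Z\times_X\tilde{X}^H$) for the bridge between the upstairs and downstairs statements, and Chow plus normalization to algebraize compact analytic subspaces; the only organisational difference is that you prove $(1)\Leftrightarrow(2)$ and $(2)\Leftrightarrow(3)$ directly where the paper runs the cycle $1\Rightarrow2\Rightarrow3\Rightarrow1$, which changes nothing of substance.

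One step fails as literally stated. In $(4)\Rightarrow(1)$ you argue that if $\tilde{\Sh}_X^H$ carried a positive-dimensional compact analytic subspace it would carry an irreducible compact curve $C$; this is not true of a general compact analytic space (a generic $2$-dimensional complex torus contains no curves), and you give no reason why $\tilde{\Sh}_X^H$ should be special in this respect. The repair is immediate and is what the paper does: take the positive-dimensional connected compact subspace $T$ itself; since $\psi$ is proper and surjective with connected fibres, $\psi^{-1}(T)$ is a connected compact analytic subspace of $\tilde{X}^H$, hence by hypothesis $(4)$ it lies in a fibre of $\varphi$, hence in a single fibre of its Stein factorization $\psi$, so $T$ is a point. (A second, purely cosmetic slip: you invoke Chow for ``the quasi-projective $X$'', but $X$ is only assumed to be a normal variety; as in the paper, one applies Chow's theorem to a compactification of $X$.) With these adjustments your argument is correct and coincides with the paper's.
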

The maps in 1, 2 and 3, if existing, are unique (up to unique isomorphism).

\begin{proof}
Let $\rho \colon \pi_1(X) \to \Gamma := \pi_1(X) \slash H$ be the quotient map.\\

The assertion 1 clearly implies the assertion 4. Conversely, assuming 4, let $\psi \colon {\tilde{X}^H} \to S$ be the Stein factorisation of the collection of holomorphic maps $\varphi_\alpha \colon {\tilde{X}^H} \to S_\alpha$, cf. Theorem \ref{Cartan}. Since ${\tilde{X}^H}$ is normal and $\psi$ is a proper surjective holomorphic map with connected fibers, $\psi$ is a holomorphic fibration. If $T \subset S$ is a connected compact analytic subspace, then $\psi^{-1}(T)$ is a connected compact analytic subspace of ${\tilde{X}^H}$. Therefore $\psi^{-1}(T)$ is contained in a fiber of $\varphi$, hence in a fiber of $\psi$, so that $T$ is a point. This shows 1.\\

$1 \Rightarrow 2$. By unicity of the map $\tilde{\sh}_X^H$, the Galois action of $\Gamma$ on ${\tilde{X}^H} $ descends to an action on $\tilde{\Sh}_X^H$ which is still properly discontinuous. We claim that the map induced on the quotients $\sh_X^H \colon X \rightarrow \Sh_X^H$ is the $H$-Shafarevich morphism. \\
First note that $\sh_X^H$ is proper: if $K \subset \Sh_X^H$ is a sufficiently small compact subset that lifts to $\tilde{\Sh}_X^H$, then $\left(\sh_X^H \right)^{-1}(K)$ is the image of the compact $(\tilde{\sh}_X^H)^{-1}(K)$, hence it is compact. In particular, the fibers of $\sh_X^H$ are images of the fibers of $\tilde{\sh}_X^H$, hence they are connected.\\
Let $Z$ be a connected proper complex algebraic variety equipped with an algebraic map $f \colon Z \rightarrow X$. If the image of $\pi_1(Z)$ in $\Gamma$ is finite, then the connected components of the analytic space $Z \times_X {\tilde{X}^H}$ are compact. Therefore, their image in $\tilde{\Sh}_X^H$ is a point, hence the image of $Z$ in $\Sh_X^H$ is a point.\\
Conversely, assume that the composite map $\sh_X^H \circ f \colon Z \rightarrow \Sh_X^H$ is constant, or equivalently that $f(Z)$ is contained in a fiber $F$ of $\sh_X^H$. Since $\tilde{\sh}_X^H$ is a fibration, the preimage of $F$ in ${\tilde{X}^H}$ is a disjoint union of connected compact analytic subspaces. Therefore, the image of $\pi_1(F)$ in $\Gamma$ is finite, hence so is the image of $\pi_1(Z)$ in $\Gamma$.\\ 

$ 2 \Rightarrow 3$ is immediate.\\

$3 \Rightarrow 1$.
Let $F$ be a (connected) fiber of $\sh_X^H$. Since the image of $\pi_1(F)$ in $\Gamma$ is finite, the connected components of $F \times_X {\tilde{X}^H}$ are compact. A fortiori, the connected components of the fibers of the composite map ${\tilde{X}^H} \to X \to \Sh_X^H$ are compact. Define $\tilde{\sh}_X^H \colon \tilde{X}^H \rightarrow \tilde{\Sh}_X^H$ as its Stein factorization, cf. Theorem \ref{Cartan}. To prove that $\tilde{\Sh}_X^H$ has no positive-dimensional compact analytic subspace, it is equivalent to prove that every connected compact analytic subspace $Z \subset {\tilde{X}^H}$ is contracted by $\tilde{\sh}_X^H$. One may assume that $Z$ is irreducible. Let $\bar Z \to Z$ be its normalization, so that $\bar Z$ is also a compact irreducible complex analytic space. Let $Y \subset X$ be the image of $Z$ by the map ${\tilde{X}^H} \to X $. It is a compact analytic subspace of $X$, hence it is an algebraic subvariety by applying Chow's theorem to a compactification of $X$. Let $\bar Y \to Y$ be the normalization of $Y$. Since the map $Z \to Y$ is surjective, the universal property of the normalization implies that the composite map $\bar Z \to Z \to Y$ factorizes through $\bar Y$. Therefore, one has a commutative diagram:
\[
\begin{tikzcd}
\bar Z \arrow[r] & \bar Y \times_Y {\tilde{X}^H} \arrow[d] \arrow[r] &  Z   \arrow[d] \arrow[r] & {\tilde{X}^H} \arrow[d] \\
                  & \bar Y \arrow[r]& Y \arrow[r]& X
\end{tikzcd}
\]

The compact irreducible analytic space $\bar Z$ surjects onto one of the irreducible (= connected) components of the normal analytic space $\bar Y \times_Y {\tilde{X}^H}$. Therefore, the image of $\pi_1(\bar Y)$ in $\Gamma$ is finite, so that $\sh_X^H (\bar Y)$ is a point and $\tilde{\sh}_X^H(Z)$ is a point. 
\end{proof}

\begin{prop}\label{H-Shafarevich morphism-pull-back}
Let $X$ be a connected normal complex algebraic variety and $H \subset \pi_1(X)$ be a normal subgroup. Assume that $\sh_X^H \colon X \rightarrow \Sh_X^H$ is a Shafarevich morphism for the pair $(X,H)$. Let $Y$ be a connected normal complex algebraic variety equipped with a proper morphism $f \colon Y \to X$, and $f_\ast^{-1}(H) \subset \pi_1(Y)$ be the preimage of $H$ by the induced homomorphism $f_\ast \colon \pi_1(Y) \to \pi_1(X)$. Then the  Stein factorization of the composition of the maps $Y \to X$ and $X \rightarrow \Sh_X^H$ is a Shafarevich morphism for the pair $(Y,f_\ast^{-1}(H))$.
\end{prop}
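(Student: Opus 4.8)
The plan is to reduce everything to one elementary group-theoretic observation together with the universal property already built into the hypothesis. Set $H' := f_\ast^{-1}(H)$, a normal subgroup of $\pi_1(Y)$ because $H$ is normal in $\pi_1(X)$. By construction $H'$ is the kernel of the composite $\pi_1(Y) \xrightarrow{f_\ast} \pi_1(X) \to \pi_1(X)/H$, so the induced homomorphism $\pi_1(Y)/H' \to \pi_1(X)/H$ is \emph{injective}. Hence, for any subgroup of $\pi_1(Y)$, its image in $\pi_1(Y)/H'$ is finite if and only if its image in $\pi_1(X)/H$ is finite; this equivalence is essentially the only thing the normalization and the preimage construction are used for.

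Next I would form the proper holomorphic map $g := \sh_X^H \circ f \colon Y \to \Sh_X^H$ --- proper as the composite of the proper maps $f$ and $\sh_X^H$ --- and take its Stein factorization $Y \xrightarrow{\ q\ } T \xrightarrow{\ h\ } \Sh_X^H$. Here $q$ is a holomorphic fibration in the sense recalled above (proper, surjective, with $q_\ast \cO_Y = \cO_T$, hence connected fibers), $T$ is connected since $Y$ is, $T$ is a genuine complex analytic space by Grauert's coherence theorem, and $h$ is finite, so every fiber of $h$ is a finite discrete set. The assertion is that $q \colon Y \to T$ is the Shafarevich morphism of $(Y, H')$, and it remains to check the defining property against an arbitrary connected proper complex algebraic variety $Z$ with an algebraic morphism $e \colon Z \to Y$. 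Note that $f \circ e \colon Z \to X$ is then an algebraic morphism from a connected proper variety, and $\sh_X^H \circ f \circ e = h \circ q \circ e$.

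For the verification, suppose first that the image of $\pi_1(Z)$ in $\pi_1(Y)/H'$ is finite; by the injectivity above its image in $\pi_1(X)/H$ is finite, so the Shafarevich property of $\sh_X^H$ forces $\sh_X^H(f(e(Z)))$ to be a point, whence $q(e(Z))$ lies in a single fiber of $h$, and being the continuous image of the connected space $Z$ inside a discrete set it is one point. Conversely, if $q(e(Z))$ is a point then $\sh_X^H(f(e(Z))) = h(q(e(Z)))$ is a point, the Shafarevich property of $\sh_X^H$ gives that the image of $\pi_1(Z)$ in $\pi_1(X)/H$ is finite, and injectivity again yields finiteness in $\pi_1(Y)/H'$. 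I do not expect a serious obstacle here; the only point requiring a little care is confirming that the Stein factorization $q$ genuinely qualifies as a holomorphic fibration and that $h$ has finite fibers --- both are standard features of the Stein factorization of a proper holomorphic map of complex spaces, and the same reasoning pattern already appears in the proof of Theorem \ref{equivalent_Shafa}.
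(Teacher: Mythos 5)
Your proof is correct and is exactly the intended argument: the paper's own proof is just ``Direct consequence of the definitions,'' and your write-up supplies the obvious unwinding (injectivity of $\pi_1(Y)/f_\ast^{-1}(H) \to \pi_1(X)/H$, plus the fact that the Stein factorization of the proper map $\sh_X^H \circ f$ has finite second factor, so connectedness of $Z$ forces its image in $T$ to be a point). No discrepancy with the paper.
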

\begin{proof}
Direct consequence of the definitions.
\end{proof}

\begin{prop}\label{Shafarevich_finite_etale_map}
Let $X$ be a connected normal complex algebraic variety and $H \subset \pi_1(X)$ be a normal subgroup. Let $\pi \colon X^\prime \to X$ be a finite étale morphism, and set $H^\prime := \pi_\ast^{-1}(H) \subset  \pi_1(X^\prime) $. Then, the $(X, H)$-Shafarevich morphism exists if and only if the $(X^\prime, H^\prime)$-Shafarevich morphism exists. 
\end{prop}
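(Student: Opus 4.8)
The plan is to exploit the characterization in Theorem \ref{equivalent_Shafa}, particularly the equivalence of assertion 1 (existence of $\tilde{\sh}_X^H$ on the cover) with the existence of the Shafarevich morphism downstairs, together with the fact that the relevant Galois étale covers essentially coincide. First I would observe that since $\pi \colon X' \to X$ is finite étale, the subgroup $\pi_\ast(\pi_1(X')) \subset \pi_1(X)$ has finite index, and $H' = \pi_\ast^{-1}(H)$ is commensurable (inside $\pi_1(X')$) with $H \cap \pi_1(X')$ — more precisely, $\pi_1(X')/H'$ injects into $\pi_1(X)/H$ as a finite-index subgroup. Consequently the Galois étale cover $\widetilde{X'}^{H'} \to X'$ and the étale cover $\tilde{X}^H \to X$ fit into a commutative square: there is a natural finite étale map $\widetilde{X'}^{H'} \to \tilde{X}^H$ covering $\pi$. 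Indeed, the composite $\pi_1(\widetilde{X'}^{H'}) = H' \hookrightarrow \pi_1(X') \xrightarrow{\pi_\ast} \pi_1(X)$ lands in $H$, so $\widetilde{X'}^{H'} \to X' \to X$ factors through $\tilde X^H$, and the resulting map $\widetilde{X'}^{H'} \to \tilde X^H$ is finite étale because it is a map of coverings of $X$ with the source connected and $\pi_1(\widetilde{X'}^{H'})$ of finite index in $\pi_1(\tilde X^H) = H$.

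Next I would use assertion 1 of Theorem \ref{equivalent_Shafa} as the criterion to transport existence across $\pi$. Suppose the $(X,H)$-Shafarevich morphism exists; then by Theorem \ref{equivalent_Shafa} there is a holomorphic fibration $\tilde{\sh}_X^H \colon \tilde X^H \to \tilde{\Sh}_X^H$ onto a normal complex space with no positive-dimensional compact analytic subspace. Pull this back (i.e. take the Stein factorization of the composite $\widetilde{X'}^{H'} \to \tilde X^H \to \tilde{\Sh}_X^H$): the resulting map $\widetilde{X'}^{H'} \to \tilde{\Sh}$ is a holomorphic fibration (source normal, map proper surjective with connected fibers — properness holding since $\widetilde{X'}^{H'} \to \tilde X^H$ is finite), and its target still has no positive-dimensional compact analytic subspace, because any such subspace would map finitely onto one in $\tilde{\Sh}_X^H$. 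By assertion 1 $\Rightarrow$ 2 of Theorem \ref{equivalent_Shafa} applied to the pair $(X', H')$, the $(X', H')$-Shafarevich morphism exists. For the converse, suppose the $(X',H')$-Shafarevich morphism exists; pass to a further finite étale cover $X'' \to X'$ that is Galois over $X$ (the Galois closure of $\pi$), so $X'' \to X$ is finite étale and Galois. By Proposition \ref{H-Shafarevich morphism-pull-back} the pullback of the $(X',H')$-Shafarevich morphism along $X'' \to X'$ is the $(X'', H'')$-Shafarevich morphism, where $H'' \subset \pi_1(X'')$ is the appropriate preimage. Now the Galois group $G$ of $X'' \to X$ acts on $X''$ preserving $H''$ (as $H'' $ is the preimage of the normal subgroup $H$), hence by uniqueness of the Shafarevich morphism it acts equivariantly on $\Sh_{X''}^{H''}$; the map on quotients $X = X''/G \to \Sh_{X''}^{H''}/G$ is then a holomorphic fibration (properness and connectedness of fibers as in the $1\Rightarrow 2$ step of Theorem \ref{equivalent_Shafa}), and one checks directly against Definition \ref{H-Shafarevich morphism} that it is the $(X,H)$-Shafarevich morphism: the defining property for a proper $Z \to X$ is tested after base change to $X''$ using that $\pi_1(Z)$ has finite image in $\pi_1(X)/H$ if and only if the corresponding component of $Z\times_X X''$ has finite image in $\pi_1(X'')/H''$.

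The main obstacle I expect is the converse direction — descending a Shafarevich morphism from a finite étale cover back down to $X$. The subtlety is purely in making the group-theoretic bookkeeping match Definition \ref{H-Shafarevich morphism}: one must verify that the finiteness condition "$\mathrm{image}(\pi_1(Z) \to \pi_1(X)/H)$ is finite" is equivalent, for each connected component of the base change $Z \times_X X''$, to the analogous condition upstairs, and that contracting exactly the right subvarieties downstairs is forced by the $G$-equivariance. Reducing to the Galois case via the Galois closure and then quotienting by the finite group $G$ — rather than trying to descend directly along the non-Galois $\pi$ — is the device that makes this clean, and it is why invoking Proposition \ref{H-Shafarevich morphism-pull-back} at the start of the converse is worthwhile. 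The forward direction, by contrast, is essentially automatic once the finite étale map $\widetilde{X'}^{H'} \to \tilde X^H$ between the associated covers is in place.
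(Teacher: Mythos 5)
Your proof is correct and follows essentially the same route as the paper: the forward direction is the pullback/Stein-factorization argument (the paper phrases it downstairs as the Stein factorization of $\sh_X^H \circ \pi$, via Proposition \ref{H-Shafarevich morphism-pull-back}, while you phrase it upstairs on the covers via Theorem \ref{equivalent_Shafa}, which is a cosmetic difference), and the converse is exactly the paper's argument of reducing to the Galois closure using the forward direction and then quotienting the Shafarevich morphism by the Galois group.
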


\begin{proof}
If the Shafarevich morphism for the pair $(X, H)$ exists, then the Stein factorization of the composite morphism $\sh_{X}^H \circ \pi \colon X^\prime \rightarrow \Sh_{X}^{H}$ is the Shafarevich morphism of the pair $(X^\prime, H^\prime)$.\\
Let us now prove the converse. By going to a further finite étale cover of $X^\prime$ and using the first part of the proposition, one can assume that that finite étale cover $\pi \colon X^\prime \to X$ is Galois, with Galois group $\Gamma$. The Shafarevich morphism for the pair $(X^\prime, H^\prime)$ is $\Gamma$-equivariant, hence by quotienting by $\Gamma$ one gets a proper holomorphic map $X \to \Gamma \backslash  \Sh_{X^\prime}^{H^\prime}$. This map is easily seen to be the Shafarevich morphism for the pair $(X,H)$.
\end{proof}

\begin{prop}\label{Shafarevich: from smooth to normal}
Let $Y \to X$ be a surjective proper morphism with connected fibres between normal complex algebraic varieties. Let $H \subset \pi_1(X)$ be a normal subgroup. Let $G \subset \pi_1(Y)$ be the preimage of $H$ by the homomorphism of groups $\pi_1(Y) \to \pi_1(X)$. Assume that the $G$-Shafarevich morphism $\sh_{Y}^{G} \colon Y \rightarrow \Sh_{Y}^{G}$ exists. Then there is a (unique) factorization
\begin{align*}
\xymatrix{
Y \ar[r] \ar[d] &  \Sh_{Y}^{G} \\
X   \ar[ru]       &   }
\end{align*}
and the induced morphism $X \rightarrow \Sh_{Y}^{G}$ is the $H$-Shafarevich morphism.
\end{prop}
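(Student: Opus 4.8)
The plan is to produce the factorization using the Stein factorization machinery and then check the two defining properties of the Shafarevich morphism. First I would observe that since $Y \to X$ is proper surjective with connected fibres and both varieties are normal, the fibres of the composite $Y \to \Sh_Y^G$ over a point need not a priori be unions of fibres of $Y \to X$; so the first task is to show that every fibre $F$ of $Y \to X$ is contracted by $\sh_Y^G$. Because $Y \to X$ has connected fibres, $F$ is connected; moreover the image of $\pi_1(F)$ in $\pi_1(Y)$ lands in the kernel of $\pi_1(Y) \to \pi_1(X)$ (any loop in a fibre is homotopically trivial in $X$... more precisely its image in $\pi_1(X)$ is trivial), hence \emph{a fortiori} its image in $\pi_1(Y)/G$ is trivial, in particular finite. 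If $F$ is not proper one passes to a resolution/compactification argument, or rather uses the characterization in Theorem \ref{equivalent_Shafa}(3): it suffices to treat irreducible closed algebraic $Z \subset Y$ whose normalization has finite image in $\pi_1(Y)/G$, and fibres of $Y\to X$ are such $Z$ by the $\pi_1$ computation. Therefore $\sh_Y^G$ contracts every fibre of $Y \to X$.

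Next, since $Y \to X$ is a proper surjective fibration (connected fibres) between normal spaces, it is in particular a quotient map that identifies $X$ with the Stein factorization of itself; more concretely, a holomorphic map out of $Y$ that is constant on the fibres of $Y \to X$ descends to a holomorphic map out of $X$. Applying this to $\sh_Y^G \colon Y \to \Sh_Y^G$, which we have just shown is fibre-wise constant, yields a holomorphic map $g \colon X \to \Sh_Y^G$ with $g \circ (Y \to X) = \sh_Y^G$. The map $g$ is proper and surjective because $\sh_Y^G$ is and $Y \to X$ is surjective; its fibres are connected because $\sh_Y^G$ has connected fibres and $Y \to X$ is surjective with connected fibres (the preimage in $Y$ of a fibre of $g$ is a fibre of $\sh_Y^G$, hence connected, and surjects onto the fibre of $g$). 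Hence $g$ is a holomorphic fibration. Uniqueness of the factorization is immediate since $Y \to X$ is surjective.

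It remains to verify that $g \colon X \to \Sh_Y^G$ satisfies the defining property of the $H$-Shafarevich morphism. Let $Z$ be a connected proper complex algebraic variety with a morphism $h \colon Z \to X$. Form the fibre product $W := Z \times_X Y$; it is proper over $Z$, hence proper, and the projection $W \to Z$ is surjective with connected fibres (base change of $Y \to X$), so $W$ has a connected component $W_0$ surjecting onto $Z$ via a proper morphism. The composite $W_0 \to Y$ has image in $X$ equal to $h(Z)$. Now $g(h(Z))$ is a point iff $\sh_Y^G(W_0)$ is a point (since $W_0 \to Z$ is surjective and $g \circ h \circ (W_0 \to Z) = \sh_Y^G|_{W_0}$), which by the defining property of $\sh_Y^G$ holds iff the image of $\pi_1(W_0)$ in $\pi_1(Y)/G$ is finite. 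So I must match ``image of $\pi_1(W_0)$ in $\pi_1(Y)/G$ finite'' with ``image of $\pi_1(Z)$ in $\pi_1(X)/H$ finite''. Since $G$ is the preimage of $H$, the map $\pi_1(Y)/G \to \pi_1(X)/H$ is \emph{injective}, and $W_0 \to Z \to X$ factors through $W_0 \to Y$, so the image of $\pi_1(W_0)$ in $\pi_1(X)/H$ equals the image of $\pi_1(Z)$ in $\pi_1(X)/H$ (using that $W_0 \to Z$ is surjective proper with connected fibres, hence $\pi_1$-surjective up to finite index — here is where I must be slightly careful, as $W_0 \to Z$ is only surjective with connected fibres, which does give surjectivity of $\pi_1(W_0) \to \pi_1(Z)$ for a proper fibration, or at least finite cokernel). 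Combining, the image of $\pi_1(W_0)$ in $\pi_1(Y)/G$ is finite iff its image in the subgroup $\pi_1(Y)/G \hookrightarrow \pi_1(X)/H$ is finite iff the image of $\pi_1(Z)$ in $\pi_1(X)/H$ is finite. This gives the desired equivalence.

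The main obstacle I expect is the $\pi_1$-bookkeeping in the last step: ensuring that $W_0 \to Z$ (a connected component of a base change of a fibration) induces a map on fundamental groups with finite-index image, so that finiteness of images can be transported between $Z$ and $W_0$; and dually controlling $\pi_1$ of the fibres of $Y \to X$ well enough to know they map to a finite subgroup of $\pi_1(Y)/G$ — this uses properness/compactness of fibres only in the form of the criterion \ref{equivalent_Shafa}(3), which handily reduces everything to algebraic subvarieties and their normalizations, sidestepping the non-properness of individual fibres.
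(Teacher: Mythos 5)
Your proof is correct, but it takes a different route from the paper's. The paper works entirely upstairs on the Galois covers: since $Y\to X$ is a fibration between normal varieties, $\pi_1(Y)\to\pi_1(X)$ is surjective, so $\tilde Y^G = Y\times_X\tilde X^H$ and $\tilde Y^G\to\tilde X^H$ is itself a proper fibration with compact connected fibres; these are automatically contracted by $\tilde\sh_Y^G$ because $\tilde\Sh_Y^G$ has no positive-dimensional compact analytic subspace, normality of $\tilde X^H$ gives the factorization $\tilde X^H\to\tilde\Sh_Y^G$, and criterion (1) of Theorem \ref{equivalent_Shafa} immediately yields the $H$-Shafarevich morphism after quotienting by $\pi_1(X)/H=\pi_1(Y)/G$. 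You instead stay downstairs: you contract the fibres of $Y\to X$ using the universal property (criterion (2)) applied to the proper fibres $F$ (note your worry about non-proper $F$ is vacuous, since $Y\to X$ is proper, so $F$ is a connected proper algebraic variety and $\pi_1(F)\to\pi_1(X)$ is trivial because $F\to X$ is constant), descend $\sh_Y^G$ through the fibration $Y\to X$, and then verify criterion (2) for the descended map by base-changing a test variety $Z\to X$ to $W=Z\times_XY$. The $\pi_1$-bookkeeping you flag as the main obstacle does close up: $W\to Z$ is proper surjective with connected fibres, hence $W$ is connected and $\pi_1(W)\to\pi_1(Z)$ is surjective (not merely of finite cokernel --- this is the same standard fact the paper invokes for $Y\to X$), and since $G=f_\ast^{-1}(H)$ the map $\pi_1(Y)/G\to\pi_1(X)/H$ is injective, so finiteness of the image of $\pi_1(W)$ in $\pi_1(Y)/G$ is equivalent to finiteness of the image of $\pi_1(Z)$ in $\pi_1(X)/H$. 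What your approach buys is that it only uses the defining universal property and never the harder implications of Theorem \ref{equivalent_Shafa}; what the paper's approach buys is brevity, since the ``no positive-dimensional compact subspace'' characterization makes both the factorization and the conclusion one-line consequences.
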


In particular, this result shows that to prove the existence of Shafarevich morphisms, one needs only to consider the case where the underlying variety is smooth quasi-projective.

\begin{proof}
Let ${\tilde{X}^H} \to X$ and ${\tilde{Y}^{G}} \to Y$ be the Galois \'etale covers corresponding to $H$ and $G$ respectively. Since $Y \to X$ is a fibration between normal varieties, the induced homomorphism between their fundamental groups is surjective, hence the following commutative diagram is cartesian:
\begin{align*}
\xymatrix{
{\tilde{Y}^{G}} \ar[r] \ar[d] &  {\tilde{X}^H} \ar[d] \\
Y   \ar[r]       & X  }
\end{align*}
In particular, the map ${\tilde{Y}^{G}} \to  {\tilde{X}^H}$ is a holomorphic fibration.
Thanks to Theorem \ref{equivalent_Shafa}, the existence of the $G$-Shafarevich morphism is equivalent to the existence of a normal complex space $\tilde{\Sh}_{Y}^{G}$ with no positive-dimensional compact analytic subspace and a holomorphic fibration $\tilde{\sh}_{Y}^{G} \colon \tilde{Y}^{G} \rightarrow \tilde{\Sh}_{Y}^{G}$.
Since the map ${\tilde{Y}^{G}} \to  {\tilde{X}^H}$ is a holomorphic fibration, it fibers are contracted by $\tilde{\sh}_{Y}^{G}$. Since ${\tilde{X}^H}$ is normal, we obtain a factorization
\begin{align*}
\xymatrix{
{\tilde{Y}^{G}} \ar[r] \ar[d] & \tilde{\Sh}_{Y}^{G} \\
{\tilde{X}^H}   \ar[ru]       &   }
\end{align*} 
Applying again Theorem \ref{equivalent_Shafa}, the existence of the holomorphic fibration $\tilde{X}^{H} \rightarrow \tilde{\Sh}_{Y}^{G}$ implies the existence of the $H$-Shafarevich morphism. Moreover, one gets the diagram from the statement by quotienting by the equivariant action of $\pi_1(X) / H = \pi_1(Y) / G$.
\end{proof}

\begin{prop}\label{Shafarevich_sum}
Let $\{ \rho_i \colon \pi_1(X) \to \Gamma_i \}_{i  \in I}$ be a finite collection of homomorphisms of groups with torsion-free image on a smooth complex algebraic variety $X$. Assume that the homomorphism $\prod_{i \in I} \rho_i \colon \pi_1(X) \to \prod_{i \in I} \Gamma_i$ is maximal on $X$. Assume moreover that for every $i$, there exists a partial smooth compactification $X_i$ of $X$ to which $\rho_i$ extends and such that the Shafarevich morphism for the pair $(X_i, \rho_i)$ exist. Then the Shafarevich morphism for the pair $(X, \prod_{i \in I} \rho_i)$ exists.
\end{prop}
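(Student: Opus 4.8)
The plan is to reduce to a Stein-factorization statement on the étale covers and then glue together the individual Shafarevich data. Write $\Gamma = \prod_{i\in I}\Gamma_i$ and $\rho = \prod_i \rho_i$, and let $H = \ker\rho$, $H_i = \ker\rho_i$, so that $H = \bigcap_i H_i$ and $\tilde X^H \to X$ is the common refinement of the covers $\tilde X^{H_i}\to X$. By Theorem \ref{equivalent_Shafa}, it suffices to produce a normal complex space $\tilde\Sh_X^H$ with no positive-dimensional compact analytic subspace together with a holomorphic fibration $\tilde X^H \to \tilde\Sh_X^H$; equivalently (criterion (4) of that theorem), it suffices to exhibit a collection of holomorphic maps from $\tilde X^H$ whose fibers have compact connected components and such that every connected compact analytic subspace of $\tilde X^H$ lands in one fiber. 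The natural candidates are the composites $\tilde X^H \to \tilde X^{H_i} \to \tilde\Sh_{X_i}^{H_i}$, where the second map is the étale-cover fibration attached to the Shafarevich morphism $\sh_{X_i}^{\rho_i}$ via Theorem \ref{equivalent_Shafa} (here one uses Proposition \ref{H-Shafarevich morphism-pull-back}, or rather its étale-cover incarnation, to pull the $X_i$-data back to $X$, using that $\rho_i$ factors through $\pi_1(X)\to\pi_1(X_i)$ since $X\hookrightarrow X_i$ is an open immersion).

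First I would check that each composite $\varphi_i\colon \tilde X^H \to \tilde\Sh_{X_i}^{H_i}$ makes sense: because $X \hookrightarrow X_i$ is an open immersion of smooth varieties, $\pi_1(X)\to\pi_1(X_i)$ is surjective, $\rho_i$ pulls back to $\rho_i$ on $\pi_1(X)$, and the cover $\tilde X^{H_i}\to X$ is the restriction of $\tilde{X_i}^{H_i}\to X_i$; precomposing with $\tilde X^H \to \tilde X^{H_i}$ gives $\varphi_i$. Next, set $\varphi = \prod_i \varphi_i\colon \tilde X^H \to \prod_i \tilde\Sh_{X_i}^{H_i}$ and $\psi\colon \tilde X^H \to S$ its Stein factorization (Theorem \ref{Cartan}); I must verify the two conditions of criterion (4). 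For the first, a fiber of $\varphi$ over a point lies inside a fiber of each $\varphi_i$, hence maps into a single fiber of $\tilde X^{H_i}\to\tilde\Sh_{X_i}^{H_i}$; by Theorem \ref{equivalent_Shafa} applied on $X_i$ those fibers have compact connected components (this is exactly what "no positive-dimensional compact analytic subspace in $\tilde\Sh_{X_i}^{H_i}$" unwinds to for the cover), so the connected components of the $\varphi_i$-fibers, hence of the $\varphi$-fibers, are compact — but here I need to be careful, since a fiber of $\varphi$ is the intersection of the $\varphi_i$-fibers and I should argue its components are compact, which follows because they are closed in a component of a single $\varphi_i$-fiber, which is compact. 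For the second condition, let $W \subset \tilde X^H$ be a connected compact analytic subspace; its image in $X$ is, by Chow applied to a compactification, an algebraic subvariety $Y$, and I want to show $W$ is contracted by every $\varphi_i$. The image of $W$ in $\tilde X^{H_i}$ is a connected compact analytic subspace, hence (again by Theorem \ref{equivalent_Shafa} on $X_i$, together with the fact that a compact analytic subvariety of $\tilde X^{H_i}$ spreads to an algebraic one downstairs and has finite-image fundamental group in $\Gamma_i$) is contained in a fiber of $\tilde X^{H_i}\to\tilde\Sh_{X_i}^{H_i}$; thus $W$ is contracted by $\varphi_i$ for each $i$, as required.

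The one remaining point — and the place where the maximality hypothesis is genuinely used — is that criterion (4) only controls compact analytic subspaces of the cover, while to invoke Theorem \ref{equivalent_Shafa} in the reverse direction one wants a fibration; the issue is whether $\psi$ is actually the Stein factorization giving a normal base with no compact positive-dimensional subspace, and this is handled inside Theorem \ref{equivalent_Shafa} itself once criterion (4) is established, so there is nothing new. Where maximality does enter is in making sure the maps $\varphi_i$ see \emph{all} of $\tilde X^H$ compatibly: the hypothesis that $\prod_i\rho_i$ is maximal on $X$ guarantees, via the remark after Definition \ref{definition_maximality}, that no boundary phenomenon of $X$ inside any compactification forces a discrepancy between the compact analytic subspaces of $\tilde X^H$ and those detected by the $\tilde X^{H_i}$; concretely, it ensures that the Stein factorization $\psi$ does not contract "too much," i.e. that a fiber of $\psi$ still has finite image fundamental group in $\Gamma$ rather than merely in each $\Gamma_i$ separately. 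I expect the main obstacle to be exactly this coherence step: checking that a connected compact analytic subspace of $\tilde X^H$ that is contracted by each $\varphi_i$ is genuinely contracted by $\psi$ — which is the second bullet of criterion (4) and is what I sketched above — and dually that the first bullet's compactness is not lost when intersecting fibers. Both are soft once one has the $X_i$-level statements in the étale-cover form of Theorem \ref{equivalent_Shafa}, and the argument is essentially the $1\Leftrightarrow 4$ equivalence of that theorem applied to the product map $\varphi$.
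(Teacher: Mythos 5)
Your verification of the first condition of criterion (4) contains a genuine gap. You justify the compactness of the connected components of the fibers of $\varphi=\prod_i\varphi_i$ by saying that they are closed in a connected component of a single $\varphi_i$-fiber, ``which is compact.'' But it is not: the fibration produced by Theorem \ref{equivalent_Shafa} for the pair $(X_i,\rho_i)$ is a proper map $\tilde{X_i}^{H_i}\to\tilde\Sh_{X_i}^{H_i}$ over the \emph{partial compactification} $X_i$, and your $\varphi_i$ is its restriction to the open subset of $\tilde{X_i}^{H_i}$ lying over $X\subset X_i$. A fiber of that restriction is an open subset of a compact fiber, hence in general non-compact (its closure meets the boundary over $X_i\setminus X$). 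So the compactness of the components of the $\varphi$-fibers does not follow from the compactness of the components of any single $\varphi_i$-fiber, and this is exactly the point of the proposition: without the maximality hypothesis the statement is false. Relatedly, your account of where maximality enters is off target: finiteness of the image of $\pi_1$ of a fiber in $\prod_i\Gamma_i$ is automatic once it is finite in each $\Gamma_i$ (the product is finite), so that is not the issue; the issue is precisely the compactness you glossed over. Your verification of the second condition of criterion (4) (compact connected analytic subspaces of $\tilde X^H$ are contracted by every $\varphi_i$) is fine.

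The paper's proof closes this gap in one stroke. Maximality of $\prod_i\rho_i$ forces $\bigcap_{i\in I}X_i=X$ inside a common compactification $\bar X$: if some boundary point of $X$ survived in every $X_i$, a disk $v\colon\Delta\to\bigcap_i X_i$ with $v(\Delta^\ast)\subset X$ and $v(0)\notin X$ would kill the local monodromy in every $\Gamma_i$ (each $\rho_i$ extends over $X_i$), contradicting maximality. Hence the diagonal $X\to\prod_{i\in I}X_i$ is a proper (closed) immersion, the product $\prod_i\sh_{X_i}^{\rho_i}$ is the Shafarevich morphism of $\prod_i X_i$ for $\prod_i\rho_i$, and Proposition \ref{H-Shafarevich morphism-pull-back} applied to this proper map yields the conclusion; the fiber components of the composite $X\to\prod_i X_i\to\prod_i\Sh_{X_i}^{\rho_i}$ are compact because they are intersections $\bigcap_i F_i$ of compact fibers $F_i\subset X_i$, which lie in $\bigcap_i X_i=X$. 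If you want to keep your covering-space formulation, you must route the argument through this identity $\bigcap_i X_i=X$; as written, the key compactness claim is unsupported.
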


\begin{proof}
First note that $\prod_{i \in I} \sh_{X_i}^{\rho_i} \colon \prod_{i \in I} X_i \to \prod_{i \in I} \Sh_{X_i}^{\rho_i}$ is the Shafarevich morphism for the representation $\prod_{i \in I} \colon \pi_1( \prod_{i \in I} X_i) \to \prod_{i \in I} \Gamma_i$. On the other hand, since $\prod_{i \in I} \rho_i$ is maximal on $X$, one has $\cap_{i \in I} X_i = X$. In particular, the diagonal embedding $X \to \prod_{i \in I} X_i$ is proper. By applying
Proposition \ref{H-Shafarevich morphism-pull-back}, it follows that the Shafarevich morphism for the pair $(X, \prod_{i \in I} \rho_i)$ exists and is equal to the Stein factorization of the composition of the maps $X \to \prod_{i \in I} X_i$ and $ \prod_{i \in I} \sh_{X_i}^{\rho_i}$.
\end{proof}

\section{Katzarkov-Zuo reductions}

\begin{defn}
Let $X$ be a connected complex algebraic variety. Let $G$ be an algebraic group defined over a non-archimedean local field $K$. Let  $\rho \colon \pi_1(X) \rightarrow G(K)$ be a representation whose image is Zariski-dense in a reductive group.\\
A Katzarkov-Zuo reduction of the pair $(X, \rho)$ is an algebraic morphism $\sigma_X^{\rho} \colon X \rightarrow S_X^{\rho}$ such that the following property holds:\\
For any connected complex algebraic variety $Z$ and any algebraic map $f \colon Z \rightarrow X$, the composite map $\sigma_X^{\rho} \circ f \colon Z \rightarrow S_X^{\rho}$ is constant if, and only if, the representation $f^{-1} \rho$ has bounded image.
\end{defn}

The following result is proved in \cite[Proposition 1.4.7]{Eyssidieux-reductive} in the proper case and in \cite{Brotbek-Daskalopoulos-Deng-Mese} and \cite[Theorem 0.6]{Cadorel-Deng-Yamanoi} in the general case.

\begin{thm}\label{Katzarkov-Zuo-smooth-case}
Let $X$ be a connected smooth quasi-projective complex algebraic variety. Let $G$ be an algebraic group defined over a non-archimedean local field $K$. Let  $\rho \colon \pi_1(X) \rightarrow G(K)$ be a representation whose image is Zariski-dense in a reductive group. Then the pair $(X, \rho)$ admits a Katzarkov-Zuo reduction.
\end{thm}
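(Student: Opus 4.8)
The plan is to realise $\sigma_X^\rho$ as the algebraic fibration underlying a holomorphic foliation built from an equivariant harmonic map to the Bruhat--Tits building of $G(K)$. First I would make some harmless reductions: replacing $G$ by the Zariski closure of $\rho(\pi_1(X))$, we may assume that $\rho$ has Zariski-dense image in a connected reductive group $G$; standard manipulations (passing to a finite separable extension of $K$, over which buildings behave well, and to a finite étale cover $X' \to X$, over which a Katzarkov--Zuo reduction descends by a Galois-averaging argument parallel to the proof of Proposition~\ref{Shafarevich_finite_etale_map}) allow us to assume in addition that $G$ is semisimple. Fix a smooth compactification $\bar X \supset X$ with $\bar X \setminus X$ a normal crossing divisor.

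Next I would invoke the theory of equivariant harmonic maps. The group $G(K)$ acts properly on its Bruhat--Tits building $\mathcal{B}$, a locally finite, finite-dimensional, complete $\mathrm{CAT}(0)$ polyhedral complex. By Gromov--Schoen \cite{Gromov-Schoen} in the projective case, and by its extension to the quasi-projective setting with tame (logarithmic) energy growth along $\bar X \setminus X$ due to Mochizuki \cite{Mochizuki_asterisque} and to Brotbek--Daskalopoulos--Deng--Mese \cite{Brotbek-Daskalopoulos-Deng-Mese}, there exists a $\rho$-equivariant, locally Lipschitz, pluriharmonic map $u \colon \widetilde X \to \mathcal{B}$, which is non-constant unless $\rho$ is already bounded (in which case $S_X^\rho$ is a point and there is nothing to prove). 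On the open set where $u$ is regular it factors locally through a Euclidean apartment, and its complexified differential $\partial u$ produces a tuple of holomorphic $1$-forms, well-defined up to the action of the finite Weyl group; their elementary symmetric functions therefore assemble into a single-valued holomorphic datum on $X$ — the spectral covering data — becoming a genuine tuple of forms on a finite branched cover of $X$ and having at worst logarithmic poles along $\bar X \setminus X$. The common kernel of these forms is an integrable holomorphic foliation $\mathcal{F}$ on $X$.

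The decisive — and hardest — step is to prove that $\mathcal{F}$ is algebraic and is cut out by the fibres of an algebraic morphism. In the projective case this is Zuo's and Eyssidieux's spectral-covering argument \cite{Eyssidieux-reductive}; in general it is \cite[Theorem 0.6]{Cadorel-Deng-Yamanoi}. Granting it, and absorbing the auxiliary proper modification needed there into a further Stein factorisation (the whole construction depending only on the birational class of $X$), one obtains an algebraic fibration $\sigma_X^\rho \colon X \to S_X^\rho$ onto a normal variety whose fibres are the closures of the leaves of $\mathcal{F}$.

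Finally I would verify the defining property: for an algebraic map $f \colon Z \to X$ from a connected variety, $\sigma_X^\rho \circ f$ is constant if and only if $f^{-1}\rho$ has bounded image. Normalising and then resolving $Z$ changes neither the image of $\pi_1$ in $\pi_1(X)$ nor the pulled-back harmonic map, so one reduces to $Z$ smooth. Then $f^{-1}\rho$ is bounded $\iff$ its image fixes a point of $\mathcal{B}$ (fixed-point theorem for a bounded orbit in a complete $\mathrm{CAT}(0)$ space) $\iff$ the $f^{-1}\rho$-equivariant harmonic map $\widetilde Z \to \mathcal{B}$, which by uniqueness of harmonic maps of finite energy is the restriction of $u$, is constant $\iff$ $f^\ast \partial u \equiv 0$ $\iff$ $f(Z)$ is everywhere tangent to $\mathcal{F}$, i.e. is contained in a fibre of $\sigma_X^\rho$. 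Here the implication from $f^\ast \partial u \equiv 0$ to the constancy of $u|_{\widetilde Z}$ uses pluriharmonicity together with the reality of $du$, and the remaining equivalences amount to the comparison of energy densities. The genuine obstacles are thus the construction of $u$ in the non-compact case with sharp control near $\bar X \setminus X$, and the algebraicity of $\mathcal{F}$ in that setting — precisely the contributions of \cite{Brotbek-Daskalopoulos-Deng-Mese} and \cite{Cadorel-Deng-Yamanoi} — whereas over a projective base both steps reduce to \cite{Gromov-Schoen} and \cite{Eyssidieux-reductive}.
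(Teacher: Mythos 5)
Your overall route---equivariant pluriharmonic map to the Bruhat--Tits building, spectral one-forms with logarithmic poles, algebraicity of the resulting foliation via \cite{Brotbek-Daskalopoulos-Deng-Mese} and \cite[Theorem 0.6]{Cadorel-Deng-Yamanoi}---is exactly the one the paper relies on; indeed the paper offers no independent proof and simply cites \cite[Proposition 1.4.7]{Eyssidieux-reductive} in the proper case and the two references above in general, so up to the final verification your sketch is a faithful outline of the cited arguments.

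There is, however, one genuine gap, and it sits precisely at the point where the paper inserts a Remark to bridge the cited literature and the statement as formulated. You write that ``normalising and then resolving $Z$ changes neither the image of $\pi_1$ in $\pi_1(X)$ nor the pulled-back harmonic map, so one reduces to $Z$ smooth.'' This is false: for a non-normal $Z$ the map $\pi_1(\bar Z) \to \pi_1(Z)$ need not be surjective (a nodal rational curve has $\pi_1 \cong \bZ$ while its normalization $\bP^1$ is simply connected), so the image of $\pi_1(Z)$ in $\pi_1(X)$ can be strictly larger than that of $\pi_1(\bar Z)$. Consequently, knowing that $f^{-1}\rho$ is bounded on the normalization does not give boundedness on $Z$ itself, and the direction ``$\sigma_X^\rho \circ f$ constant $\Rightarrow$ $f^{-1}\rho$ bounded'' does not reduce to the smooth case. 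This is exactly why \cite[Theorem 0.6]{Cadorel-Deng-Yamanoi}, which assumes $Z$ normal, is weaker than Theorem \ref{Katzarkov-Zuo-smooth-case}. The correct repair, spelled out in the paper's Remark following the theorem, is to argue directly on a connected component $F$ of a fibre of $\sigma_X^\rho$: the arguments of Claim 5.21 of loc.\ cit.\ show that every connected component of $F \times_X \tilde X^\rho$ is collapsed to a single point of the building by the pluriharmonic map; by equivariance that point is fixed by the image of $\pi_1(F)$, and since point stabilizers in a Euclidean Bruhat--Tits building are compact, the image of $\pi_1(F)$ --- hence of $\pi_1(Z)$ for any connected $Z$ mapping into $F$ --- is bounded. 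You should replace your normalization step by this argument.
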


\begin{rem}
In the non-proper case, the statement of \cite[Theorem 0.6]{Cadorel-Deng-Yamanoi} is slightly weaker than the statement of Theorem \ref{Katzarkov-Zuo-smooth-case}, since $Z$ is assumed to be normal in loc. cit.. However, the proof of Theorem \ref{Katzarkov-Zuo-smooth-case} is essentially contained in the proof of Claim 5.21 in \cite{Cadorel-Deng-Yamanoi}. Namely, if $F$ is a connected component of a fiber of $\sigma_X^{\rho}$, then the arguments in loc. cit. show that every connected component of $F \times_X \tilde{X}^\rho$ is sent to a point by the pluriharmonic map with values in the building $\Delta(G)$ associated to $\rho$. This point is fixed by the image of $\pi_1(F)$ in $G(K)$. Since the stabilizer of a point in an Euclidean Bruhat-Tits building is compact, this shows that the image of $\pi_1(F)$ in $G(K)$ is bounded.
\end{rem}

\begin{prop}\label{Katzarkov-Zuo-normal-case}
Let $X$ be a connected normal complex algebraic variety. Let $G$ be an algebraic group defined over a non-archimedean local field $K$. Let  $\rho \colon \pi_1(X) \rightarrow G(K)$ be a representation whose image is Zariski-dense in a reductive group. Then the pair $(X, \rho)$ admits a Katzarkov-Zuo reduction.
\end{prop}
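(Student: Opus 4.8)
The plan is to reduce to the smooth quasi-projective case, Theorem \ref{Katzarkov-Zuo-smooth-case}, by passing to a resolution of $X$ and descending the resulting Katzarkov-Zuo reduction along it. First, using Chow's lemma followed by resolution of singularities, I would choose a proper birational morphism $\pi \colon Y \to X$ with $Y$ smooth and quasi-projective. Since $X$ is normal, $\pi$ has connected fibres and $\pi_\ast \cO_Y = \cO_X$, and the induced homomorphism $\pi_\ast \colon \pi_1(Y) \to \pi_1(X)$ is surjective; hence $\pi^{-1}\rho := \rho \circ \pi_\ast$ has Zariski-dense image in the same reductive group as $\rho$, and Theorem \ref{Katzarkov-Zuo-smooth-case} provides a Katzarkov-Zuo reduction $\sigma_Y^{\pi^{-1}\rho} \colon Y \to S_Y^{\pi^{-1}\rho}$.

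Next I would descend this map. Each fibre $\pi^{-1}(x)$, endowed with its reduced structure, is connected and is contracted to a point of $X$, so the image of $\pi_1(\pi^{-1}(x))$ in $\pi_1(X)$ is trivial; consequently the restriction of $\pi^{-1}\rho$ to it is trivial, hence bounded, and the defining property of $\sigma_Y^{\pi^{-1}\rho}$ forces that map to be constant on $\pi^{-1}(x)$. Thus $\sigma_Y^{\pi^{-1}\rho}$ is constant on the fibres of $\pi$; since $\pi$ is proper and surjective with $\pi_\ast \cO_Y = \cO_X$, it factors through a morphism $\sigma_X^\rho \colon X \to S_X^\rho := S_Y^{\pi^{-1}\rho}$ with $\sigma_X^\rho \circ \pi = \sigma_Y^{\pi^{-1}\rho}$, exactly as in the factorization arguments already used in the paper (e.g. in the proof of Theorem \ref{equivalent_Shafa}).

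Finally I would verify that $\sigma_X^\rho$ is a Katzarkov-Zuo reduction of $(X, \rho)$. Given a connected complex algebraic variety $Z$ and a morphism $f \colon Z \to X$, one reduces — passing to the normalization and treating irreducible components separately — to the case where $Z$ is normal and irreducible. The morphism $f$ need not lift to $Y$, so instead I would form $Z \times_X Y$, whose projection to $Z$ is proper and surjective, and choose a normal irreducible $Z'$ with a proper surjective $q \colon Z' \to Z$ and $g \colon Z' \to Y$ lying over it; the Stein factorization of $q$ shows that $q_\ast(\pi_1(Z'))$ has finite index in $\pi_1(Z)$. Since $g^{-1}(\pi^{-1}\rho)$ and $q^{-1}(f^{-1}\rho)$ are literally the same representation of $\pi_1(Z')$ (both equal $\rho$ composed with $\pi_1(Z') \to \pi_1(X)$), and boundedness of the image of a representation is unchanged under passing between a group and a finite-index subgroup, $g^{-1}(\pi^{-1}\rho)$ is bounded if and only if $f^{-1}\rho$ is. Likewise $\sigma_Y^{\pi^{-1}\rho} \circ g = \sigma_X^\rho \circ f \circ q$ with $q$ surjective, so $\sigma_Y^{\pi^{-1}\rho} \circ g$ is constant if and only if $\sigma_X^\rho \circ f$ is. Feeding these two equivalences into the defining property of $\sigma_Y^{\pi^{-1}\rho}$ yields the defining property of $\sigma_X^\rho$.

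The step I expect to require the most care is this last one: since an arbitrary test variety $Z$ maps to $X$ but not to $Y$, one must route the argument through $Z \times_X Y$ and keep track of the failure of $\pi_1$ to be surjective along its components, which is precisely what the normalization and Stein-factorization bookkeeping handles. Everything else is a routine application of resolution of singularities and of the rigidity of proper morphisms with $\pi_\ast \cO = \cO$.
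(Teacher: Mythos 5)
Your overall strategy---resolve $X$ by a proper birational morphism from a smooth quasi-projective $Y$, apply Theorem \ref{Katzarkov-Zuo-smooth-case} upstairs, and descend along the resolution using normality of $X$ and the triviality of $\rho$ on its fibres---is exactly the paper's, and your descent step and your verification of the defining property for \emph{irreducible normal} test varieties $Z$ (via a dominating component of $Z\times_X Y$ and the finite-index bookkeeping) are correct.

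The gap is in the sentence claiming that one ``reduces, passing to the normalization and treating irreducible components separately, to the case where $Z$ is normal and irreducible.'' This reduction is valid only in one direction. If $f^{-1}\rho$ is bounded, then each component is contracted and connectedness of $Z$ forces $\sigma_X^\rho\circ f$ to be constant; fine. But in the converse direction, constancy of $\sigma_X^\rho\circ f$ gives you, component by component, only that $\rho$ is bounded on the image of $\pi_1$ of each normalized component, and this does \emph{not} imply that $\rho\bigl(\pi_1(Z)\bigr)$ is bounded: $\pi_1(Z)$ is generated by the component subgroups together with classes of loops travelling through several components, and a subgroup of $G(K)$ generated by finitely many bounded subgroups need not be bounded (already $\SL_2(\bQ_p)$ is generated by two compact subgroups). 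This is precisely the subtlety addressed in the Remark following Theorem \ref{Katzarkov-Zuo-smooth-case}, where it is resolved by the pluriharmonic map to the Bruhat--Tits building. To close the gap within your framework, reduce instead to the case where $Z=F$ is a connected component of a fibre of the descended map $\sigma_X^\rho$ (this suffices, since $f(Z)\subset F$ for some such $F$ and $f^{-1}\rho$ factors through $\pi_1(F)$). Then $\nu^{-1}(F)$ is a \emph{connected} component of a fibre of $\sigma_Y^{\nu^{-1}\rho}$ (because $\nu$ is proper surjective with connected fibres), the map $\pi_1\bigl(\nu^{-1}(F)\bigr)\to\pi_1(F)$ is surjective for the same reason, and hence $\rho\bigl(\pi_1(F)\bigr)=\rho'\bigl(\pi_1(\nu^{-1}(F))\bigr)$ is bounded by the defining property of the Katzarkov--Zuo reduction on $Y$ applied to the connected (possibly non-normal) variety $\nu^{-1}(F)$---here one uses that Theorem \ref{Katzarkov-Zuo-smooth-case} as stated in the paper allows arbitrary connected test varieties, which is exactly the strengthening of the published statements that the Remark is careful to justify.
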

\begin{proof}
Let $\nu \colon X^\prime \to X$ be a surjective proper birational morphism from a connected smooth quasi-projective complex algebraic variety $X$. In particular, $\nu$ has connected fibres. Let $\rho^\prime := \nu^{-1}(\rho)$. Thanks to Theorem \ref{Katzarkov-Zuo-smooth-case}, the pair $(X^\prime, \rho^\prime)$ admits a Katzarkov-Zuo reduction $\sigma_{X^\prime}^{\rho^\prime} \colon X \rightarrow S_{X^\prime}^{\rho^\prime}$. Let us show that $\sigma_{X^\prime}^{\rho^\prime}$ factorizes through $\nu$. Since $X$ is normal, it is sufficient to prove that $\sigma_{X^\prime}^{\rho^\prime}$ is connected on every (necessarily connected) fiber $F$ of $\nu$. But this follows from the definition of a Katzarkov-Zuo reduction, since the image of $\pi_1(F)$ by $\rho^\prime$ is trivial.
\end{proof}
 
\section{Moduli spaces of representations}

\subsection{Definitions}

Let $\Gamma$ be a finitely generated group and $G$ a reductive $\bQ$-algebraic group. We denote by $R(\Gamma, G)$ the $\bQ$-affine scheme of finite type that represents the functor that associates to any $\bQ$-algebra $A$ the set of group homomorphisms from $\Gamma$ to $G(A)$. We denote by $M(\Gamma, G)$ the affine scheme corresponding to the $\bQ$-algebra $\bQ[R(\Gamma, G)]^{G}$. It is a $\bQ$-scheme of finite type. \\

If $k \supset \bQ$ is an algebraically closed field, then a representation $\rho \colon \Gamma \to G(k)$ is called reductive if the Zariski-closure of its image is a reductive subgroup of $G(k)$. The orbit under conjugation of a representation $\rho \colon \Gamma \to G(\bC)$ is closed if and only if $\rho$ is reductive. On the other hand, every fiber of the projection $R(\Gamma, G) \to  M(\Gamma, G)$ contains a unique closed orbit. Therefore, the $\bC$-points of $M(\Gamma, G)$ are naturally in bijection with the elements in $R(\Gamma, G)(\bC)$ corresponding to reductive representations. See \cite{Lubotzky-Magid} and \cite{Sikora} for details.

\subsection{The $\SL_n$-character variety}

Let $\Gamma$ be a finitely generated group and $n$ a positive integer. For any $\gamma \in \Gamma$, we denote by $tr_{\gamma}$ the character function on $R(\Gamma, \SL_n)$ defined by $tr_{\gamma}(\rho) := tr(\rho(\gamma))$. These character functions are invariant under the $\SL_n$-action by conjugation. In fact, they generate the algebra of invariant functions:

\begin{prop}
The $\bQ$-algebra $\bQ[R(\Gamma, \SL_n)]^{\SL_n}$ is generated by the functions $tr_{\gamma}, \gamma \in \Gamma$.
\end{prop}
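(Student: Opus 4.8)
\emph{Proof plan.} The plan is to present $R(\Gamma,\SL_n)$ as a closed $\SL_n$-stable subscheme of a matrix space and then to invoke the first fundamental theorem for matrix invariants. Since $\Gamma$ is finitely generated, I would fix generators $g_1,\dots,g_k$; evaluation at these generators identifies $R(\Gamma,\SL_n)$ with a closed $\SL_n$-stable subscheme of $R(F_k,\SL_n)=\SL_n^{\,k}$ (for $F_k$ the free group on $k$ letters), which is itself a closed $\SL_n$-stable subscheme of the matrix space $M_n^{\,k}\simeq\bA^{kn^2}_\bQ$, the group acting by simultaneous conjugation throughout. Because $\SL_n$ is linearly reductive over $\bQ$, restriction of functions along a closed $\SL_n$-embedding of affine schemes is surjective on rings of invariants — lift an invariant function arbitrarily and apply the Reynolds operator, which commutes with equivariant restriction maps. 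Hence it suffices to prove that $\bQ[M_n^{\,k}]^{\SL_n}$ is generated by the functions $(X_1,\dots,X_k)\mapsto\Tr(X_{i_1}\cdots X_{i_\ell})$, because such a function restricts on $R(\Gamma,\SL_n)$ precisely to $tr_{g_{i_1}\cdots g_{i_\ell}}$, which is one of the functions $tr_\gamma$, $\gamma\in\Gamma$.

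Next I would observe that the conjugation action of $\SL_n$ on $M_n^{\,k}$ factors through $\mathrm{PGL}_n$, exactly as does the conjugation action of $\GL_n$ (the centre of the group acts trivially in both cases); since $\SL_n\to\mathrm{PGL}_n$ and $\GL_n\to\mathrm{PGL}_n$ are both faithfully flat, the three groups share the same subring of invariants, so $\bQ[M_n^{\,k}]^{\SL_n}=\bQ[M_n^{\,k}]^{\GL_n}$. Then, by the first fundamental theorem for invariants of tuples of $n\times n$ matrices under simultaneous conjugation (Procesi, Razmyslov, Sibirskii; see also \cite{Sikora}), $\bQ[M_n^{\,k}]^{\GL_n}$ is generated by the trace-of-word functions $\Tr(X_{i_1}\cdots X_{i_\ell})$; if one prefers to cite this statement only over $\bar\bQ$, it descends to $\bQ$ by faithful flatness since the trace functions are already defined over $\bQ$. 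Combining this with the surjectivity of restricted invariants from the previous paragraph finishes the argument.

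The only substantial input is the first fundamental theorem for matrix invariants, which I take as known; the remaining ingredients — that $R(\Gamma,\SL_n)$ sits as a closed subscheme of $\SL_n^{\,k}\subset M_n^{\,k}$, that restriction is surjective on invariants for the linearly reductive group $\SL_n$, and that $\SL_n$- and $\GL_n$-invariants coincide — are formal. The only point requiring mild care is that $\bQ$ is not algebraically closed, which is handled throughout by faithfully flat base change; I expect no real obstacle beyond correctly bookkeeping these reductions.
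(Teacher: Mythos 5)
Your proposal is correct and follows essentially the same route as the paper: embed $R(\Gamma,\SL_n)$ as a closed $\SL_n$-stable subscheme of $(\SL_n)^k\subset (M_n)^k$, use reductivity of $\SL_n$ to get surjectivity of restriction on invariant rings, and conclude by Procesi's first fundamental theorem for matrix invariants. The only difference is that you spell out the comparison between $\SL_n$- and $\GL_n$-invariants via $\mathrm{PGL}_n$ and the descent from $\bar\bQ$ to $\bQ$, points the paper leaves implicit in its citation of Procesi.
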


This result is an easy consequence of a well-known result of Procesi. We give the details for the reader convenience.
\begin{proof}
Since the group $\Gamma$ is finitely generated, the affine $\bQ$-scheme $R(\Gamma, \SL_n)$ can be realized as a closed $\bQ$-subscheme of $(\SL_n)^r$ for some positive integer $r$. If $M_n$ denote the affine $\bQ$-scheme of $n \times n$ square matrices, then the group $\SL_n$ acts by conjugation on the affine $\bQ$-scheme $(M_n)^r$, and this action preserves the closed subscheme $(\SL_n)^r \subset (M_n)^r$. Therefore, the composition of the $\bQ$-algebras homomorphisms $\bQ[(M_n)^r] \rightarrow \bQ[(\SL_n)^r] \rightarrow \bQ[R(\Gamma, \SL_n)]$ is surjective. Since $\SL_n$ is reductive, the induced $\bQ$-algebras homomorphism $\bQ[(M_n)^r]^{\SL_n} \rightarrow \bQ[R(\Gamma, \SL_n)]^{\SL_n}$ is surjective too, cf. \cite[Chapter 1 \S 2]{Mumford-GIT}. Finally, our claim is a consequence of the following result of Procesi \cite{Procesi}: the $\bQ$-algebra $\bQ[(M_n)^r]^{\SL_n}$ is generated by the functions that associate to a r-tuple of matrices the trace of a monomial in these matrices.
\end{proof}

Since the $\bQ$-algebra $\bQ[R(\Gamma , \SL_n)]^{\SL_n}$ is finitely generated, one can choose finitely many elements in $\Gamma$ such that the associated character functions yield a closed embedding $M_B(\Gamma , \SL_n) \hookrightarrow \bA_{\bQ}^r$ defined over $\bQ$. 

\subsection{Representations with bounded image}
Fix a prime number $p$. We say that an element of $M_B(\Gamma , \SL_n)(\bar \bQ_p)$ has bounded image if it corresponds to a conjugacy class of (reductive) representations $ \Gamma \rightarrow \SL_n(\bar \bQ_p)$ with bounded image with respect to the topology induced by the topology of $\bar \bQ_p$.\\

Since $\Gamma$ is finitely generated, for any representation $\rho \colon \Gamma \rightarrow \SL_n(\bar \bQ_p)$, there exists a finite extension $K$ of $\bQ_p$ such that the image of $\rho$ is contained in $\SL_n(K)$. Then $\rho$ has bounded image if and only if it is conjugated to a subgroup of $\SL_n(\cO_K)$. In that case, the character functions evaluated at $\rho$ take their values in $\cO_K$.

\begin{defn}
We denote by $M(\Gamma, \SL_n)(\bar \bQ_p)^o$ the subset of $M_B(\Gamma, \SL_n)(\bar \bQ_p)$ corresponding to (conjugacy classes of) reductive representations on which all character functions take values in $\bar \bZ_p$.
\end{defn}

\begin{prop}\label{properties of bounded characters}
Fix a prime number $p$. Then:
\begin{enumerate}
\item $M(\Gamma, \SL_n)(\bar \bQ_p)^o$ is a closed subset of $M_B(\Gamma, \SL_n)(\bar \bQ_p)$ with respect to the topology induced by $\bar \bQ_p$,
\item The intersection of $M(\Gamma, \SL_n)(\bar \bQ_p)^o$ with $M_B(\Gamma, \SL_n)(K)$ is compact for every finite extension $K$ of $\bQ_p$,
\item $M(\Gamma, \SL_n)(\bar \bQ_p)^o$ contains the points that correspond to the conjugacy classes of reductive representations $\Gamma \rightarrow \SL_n(\bar \bQ_p)$ with bounded image.
\end{enumerate}
\end{prop}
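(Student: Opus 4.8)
The plan is to establish the three assertions in turn; the first two are essentially topological bookkeeping, and the only real (but classical) input appears in the third. \textbf{For (1):} each character function $tr_\gamma$, $\gamma\in\Gamma$, is a regular function on the affine $\bQ$-scheme $M_B(\Gamma,\SL_n)$, hence induces a continuous map $tr_\gamma\colon M_B(\Gamma,\SL_n)(\bar \bQ_p)\to\bar \bQ_p$ for the analytic topology — concretely, through the closed embedding $M_B(\Gamma,\SL_n)\hookrightarrow\bA^r_\bQ$ fixed above, this topology is the subspace topology from $\bar \bQ_p^r$ and $tr_\gamma$ is a polynomial in the coordinates. Fixing on $\bar \bQ_p$ the unique absolute value $|\cdot|$ extending the $p$-adic one, $\bar \bZ_p=\{x:|x|\le 1\}$ is a closed subring, so each $tr_\gamma^{-1}(\bar \bZ_p)$ is closed, and therefore $M(\Gamma,\SL_n)(\bar \bQ_p)^o=\bigcap_{\gamma\in\Gamma}tr_\gamma^{-1}(\bar \bZ_p)$ is closed as an intersection of closed sets.

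\textbf{For (2):} keep the closed embedding $\iota=(tr_{\gamma_1},\dots,tr_{\gamma_r})\colon M_B(\Gamma,\SL_n)\hookrightarrow\bA^r_\bQ$. For a finite extension $K/\bQ_p$, the set $M_B(\Gamma,\SL_n)(K)$ is closed in $K^r$ (it is the common zero locus of the defining equations of the subscheme, which are continuous on $K^r$), while on $M(\Gamma,\SL_n)(\bar \bQ_p)^o$ each coordinate $tr_{\gamma_i}$ takes values in $\bar \bZ_p\cap K=\cO_K$. Hence $\iota$ carries $M(\Gamma,\SL_n)(\bar \bQ_p)^o\cap M_B(\Gamma,\SL_n)(K)$ into $\cO_K^r$, which is compact since $K$ is a non-archimedean local field; being also closed (the intersection with $M_B(\Gamma,\SL_n)(K)$ of the closed set from (1)), it is a closed subset of a compact space, hence compact.

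\textbf{For (3):} let $\rho\colon\Gamma\to\SL_n(\bar \bQ_p)$ be a reductive representation with bounded image, so it defines a point of $M_B(\Gamma,\SL_n)(\bar \bQ_p)$; one must check $tr(\rho(\gamma))\in\bar \bZ_p$ for every $\gamma$. Since $\Gamma$ is finitely generated, $\rho$ has image in $\SL_n(K)$ for a finite extension $K/\bQ_p$, and $\rho(\Gamma)$ is then a bounded subgroup of $\SL_n(K)$, whose closure is compact and hence stabilizes an $\cO_K$-lattice $L\subset K^n$ (take $L$ the $\cO_K$-span of the translates of a fixed lattice — bounded and of full rank, hence again a lattice). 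Thus $\rho$ is $\GL_n(K)$-conjugate to a representation with image in $\GL_n(\cO_K)$, and in fact in $\SL_n(\cO_K)$ because $\det\rho\equiv 1$; traces being conjugation-invariant, $tr(\rho(\gamma))\in\cO_K\subseteq\bar \bZ_p$. (Alternatively, without choosing $K$: boundedness of the powers $\rho(\gamma)^k$, $k\ge 1$, bounds their characteristic polynomials, hence bounds $|\lambda|^k$ uniformly in $k$ for every eigenvalue $\lambda$ of $\rho(\gamma)$, forcing $|\lambda|\le 1$; since $\prod_i\lambda_i=\det\rho(\gamma)=1$ this forces $|\lambda_i|=1$ for all $i$, whence $tr(\rho(\gamma))=\sum_i\lambda_i\in\bar \bZ_p$.)

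I do not expect a genuine obstacle here. The substance is the classical fact invoked in (3), that a bounded subgroup of $\SL_n$ over a non-archimedean local field fixes a lattice; everything else is formal. The one point needing a little care is that $\bar \bQ_p$ is not itself locally compact, which is precisely why in (2) one descends to a finite subextension $K$ (over which $\cO_K$ is compact) and why in (3) one first descends $\rho$ to such a $K$ before invoking compactness of the closure of its image.
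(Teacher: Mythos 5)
Your proof is correct and follows essentially the same route as the paper: continuity of the polynomial character functions plus closedness of $\bar \bZ_p$ for (1), the closed embedding into $\bA^r_{\bQ}$ landing in the compact $\cO_K^r$ for (2), and conjugating a bounded image into $\SL_n(\cO_K)$ (after descending to a finite extension $K$) for (3), which the paper records just before the definition of $M(\Gamma,\SL_n)(\bar\bQ_p)^o$. You merely supply more detail (the lattice-stabilization argument and the eigenvalue alternative) than the paper, which states these facts without proof.
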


\begin{rem}
One can show that an absolutely irreducible representation belongs to $M(\Gamma, \SL_n)(\bar \bQ_p)^o$ if and only if it has bounded image.
\end{rem}

\begin{proof}
Since the character functions are polynomial, the induced functions $M_B(\Gamma, \SL_n)(\bar \bQ_p) \rightarrow \bar \bQ_p$ are continuous. The first assertion follows immediately, since $\bar \bZ_p$ is closed in $\bar \bQ_p$.\\
For the second assertion, choose finitely many elements in $\Gamma$ such that the associated character functions yield a closed embedding $M_B(\Gamma , \SL_n) \hookrightarrow \bA_{\bQ}^r$ defined over $\bQ$. If $K$ is a finite extension of $\bQ_p$ and $\cO_K$ is its ring of integers, then the intersection of $M(\Gamma, \SL_n)(\bar \bQ_p)^o$ with $M_B(\Gamma, \SL_n)(K)$ coincide with the preimage of $\bA_{\bQ}^r(\cO_K)$, therefore it is compact.\\
The last assertion was proved above.
\end{proof}

\subsection{Applications}
Let $k$ be a field and $X$ be a scheme of finite type over $k$. A subset $\Sigma$ of $X$ is called constructible (with respect to the Zariski topology) if it is in the Boolean algebra generated by the closed subsets; or equivalently if $\Sigma$ is a disjoint finite union of locally closed subsets. For any field extension $l \supset k$, we denote by $\Sigma(l)$ the subset of $X(l)$ that corresponds to $k$-morphisms $\mathrm{Spec}(l) \to X$ whose image belongs to $\Sigma$. If $X \rightarrow Y$ is a $k$-morphism between two $k$-schemes of finite type and $\Sigma$ is a constructible subset of $X$, then by a classical result of Chevalley the image of $\Sigma$ is a constructible subset of $Y$.

\begin{lem} \label{a density lemma}
Let $X$ be a scheme of finite type over $\bQ$. Let $\Sigma$ be a constructible subset of $X$. If $\Sigma$ is Zariski-dense in $X$, then the set $\Sigma(\bar \bQ)$ is dense in $X(\bar \bQ_p)$ for the p-adic topology.
\end{lem}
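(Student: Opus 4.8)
The plan is to reduce to an affine situation and then exploit the fact that a non-empty $\bar\bQ$-point of an affine $\bQ$-variety gives rise, by spreading out, to $\bar\bQ_p$-points in any prescribed $p$-adic neighbourhood. First I would reduce to the case where $X$ is affine and irreducible: cover $X$ by affine opens, note that a constructible set Zariski-dense in $X$ meets some irreducible component $X_0$ in a set that is Zariski-dense in $X_0$, and that $\Sigma\cap X_0$ contains a non-empty open subset $U$ of $X_0$ (a Zariski-dense constructible set contains a dense open of the ambient irreducible scheme). Replacing $X$ by $X_0$ and shrinking, it suffices to prove: if $U$ is a non-empty open subscheme of an integral affine $\bQ$-scheme $X$ of finite type, then $U(\bar\bQ)$ is $p$-adically dense in $X(\bar\bQ_p)$. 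Since $X(\bar\bQ_p)\setminus U(\bar\bQ_p)$ is the $\bar\bQ_p$-locus of a proper closed subscheme, hence nowhere dense for the $p$-adic topology on $X(\bar\bQ_p)$ (the vanishing locus of a non-zero regular function on an integral variety has empty $p$-adic interior), it is enough to prove that $X(\bar\bQ)$ itself is $p$-adically dense in $X(\bar\bQ_p)$.

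For that last statement, embed $X$ as a closed subscheme of some $\bA^N_{\bQ}$ and fix a point $x\in X(\bar\bQ_p)$ together with a basic $p$-adic neighbourhood $\Omega$ of $x$ in $\bar\bQ_p^N$; I must produce a point of $X(\bar\bQ)$ in $\Omega$. The coordinates of $x$ lie in some finite extension $K$ of $\bQ_p$; let $\cO_K$ be its ring of integers and $\mathfrak m$ its maximal ideal. After an affine change of coordinates defined over $\bQ$ I may assume $x\in X(\cO_K)$. Reducing modulo a high power $\mathfrak m^{k}$, Hensel's lemma applied along a smooth point — or more simply the formal smoothness of $X$ at a suitable point after possibly further localizing, using generic smoothness of $X$ over $\bQ$ in characteristic zero — shows that $X(\cO_K)$ contains points arbitrarily close to $x$ whose coordinates lie in $\bar\bQ\cap\cO_K$: concretely, intersect $X$ with an affine-linear subspace defined over $\bQ$ through $x$ of complementary dimension so as to cut out a finite $\bQ$-scheme $F$ with $x\in F(\bar\bQ_p)$, and choose the linear subspace so that $x$ is a smooth (hence $\bar\bQ_p$-rational but actually $\bar\bQ$-rational, being a point of the finite $\bQ$-scheme $F$) point; then $x\in F(\bar\bQ_p)=F(\bar\bQ)\otimes_{\bQ}\bar\bQ_p$ already exhibits $x$ as a limit — indeed as an element — of $\bar\bQ$-points up to moving the linear slice slightly within its $\bQ$-rational family.

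The cleanest way to organize this is: by Noether normalization write $X$ as generically finite over $\bA^d_{\bQ}$; the image $\bar x\in\bA^d(\bar\bQ_p)$ can be approximated arbitrarily well by points of $\bA^d(\bQ)$ (trivially, as $\bQ$ is dense in $\bQ_p$ hence in $\bar\bQ_p$ in each finite subextension), and over a $\bQ$-point of the smooth locus of the finite morphism the fibre is a finite $\bQ$-scheme, whose $\bar\bQ_p$-points are all defined over $\bar\bQ$; a continuity/implicit-function argument (Hensel) then shows these $\bar\bQ$-points can be taken $p$-adically close to $x$. I expect the main obstacle to be purely bookkeeping: making the approximation uniform, i.e. tracking that one can simultaneously keep the base point in $\bA^d(\bQ)$ close to $\bar x$ and the chosen sheet of the finite cover close to $x$, which is exactly where one invokes the non-archimedean implicit function theorem / Hensel's lemma at a smooth point. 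No genuinely deep input is needed beyond generic smoothness in characteristic zero and Hensel's lemma; the statement is essentially the non-archimedean analogue of the familiar fact that $\bar\bQ$-points are Zariski-dense, upgraded to a topological density statement via smoothness.
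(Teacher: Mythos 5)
Your overall strategy (reduce to a dense open $U$ inside an integral affine piece, then approximate via a finite map to affine space plus Hensel's lemma at smooth/\'etale points) is the same as the paper's. But there is a genuine gap at the one step that actually requires a nontrivial input. You assert in passing that the $\bar\bQ_p$-locus of a proper closed subset $Z\subsetneq X$ is nowhere dense in $X(\bar\bQ_p)$ --- equivalently, that $U(\bar\bQ_p)$ is $p$-adically dense in $X(\bar\bQ_p)$ for $U$ a dense Zariski-open --- as if it were a formality about dimensions. It is not: for the nodal cubic $y^2=x^2(x-p)$ over $\bQ_p$, the origin is $p$-adically isolated in the set of $\bQ_p$-points (near $0$ one would need $x-p$ to be a square, but it has odd valuation), so the smooth locus has non-dense $\bQ_p$-points. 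The statement you need is true over the algebraically closed field $\bar\bQ_p$ (or $\bC_p$), but it genuinely uses algebraic closedness and requires an argument --- the paper cites Berkovich, Proposition 3.4.4, for exactly this. Your Hensel/implicit-function step suffers from the same issue: it produces $\bar\bQ$-points near $x$ only when $x$ lies in the locus where $X$ is smooth and the chosen map to $\bA^d$ is \'etale, so to handle an arbitrary (possibly singular) $x\in X(\bar\bQ_p)$ you must again invoke the density of that good locus inside $X(\bar\bQ_p)$, i.e.\ the very claim left unproved.

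Concretely, to close the gap you should either cite Berkovich's result as the paper does, or supply a direct argument for ``$Z(\bar\bQ_p)$ has empty interior in $X(\bar\bQ_p)$'' --- for instance, by slicing with a generic linear space through $x$ to produce an irreducible curve $C\subset X_{\bar\bQ_p}$ through $x$ not contained in $Z$, passing to its normalization $\tilde C$ (a smooth curve, whose $\bar\bQ_p$-points near a preimage of $x$ form an infinite $p$-adic disk by Hensel), and noting that this infinite set cannot map into the finite set $C\cap Z$. Everything else in your write-up (the reduction to a dense open contained in $\Sigma$, the finiteness of the cover over $\bA^d$, the density of $\bQ$ in each finite extension of $\bQ_p$) matches the paper's proof and is fine.
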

\begin{proof}
We will prove that $\Sigma(\bar \bQ)$ is even dense in $X(\bC_p)$ for the p-adic topology. If $U \subset X$ is a smooth and dense open subset contained in $\Sigma$, then $U(\bC_p)$ is dense in $X(\bC_p)$ for the p-adic topology thanks to \cite[Proposition 3.4.4]{Berkovich}. Therefore, it is sufficient to prove that there exists such an $U$ such that $U(\bar \bQ)$ is dense in $U(\bC_p)$ for the p-adic topology. By working on each connected component of $U$, one reduces to the case where $U$ is smooth and integral. Moreover, up to shrinking $U$, one can assume that there exists a finite étale morphism $U \to V$ onto an open subset $V$ of $\bA^n$ for some $n$. Since $\bar \bQ$ is dense in $\bC_p$ for the $p$-adic topology, it is clear that $V(\bar \bQ)$ is dense in $V(\bC_p)$ for the p-adic topology, from what it follows that $U(\bar \bQ)$ is dense in $U(\bC_p)$ for the p-adic topology.
\end{proof}

\begin{lem}\label{p-adic compact affine varieties}
Let $p$ be a prime number and $M$ be an affine scheme of finite type over $\bQ_p$. If the set $M(K)$ equipped with its canonical p-adic topology is compact for every finite extension $K$ of $\bQ_p$, then $M$ has dimension zero.
\end{lem}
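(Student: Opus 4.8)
The plan is to argue by contraposition: assuming $\dim M \geq 1$, I will produce a single finite extension $K$ of $\bQ_p$ for which $M(K)$ is non-compact. Fix a closed immersion $M \hookrightarrow \bA^n_{\bQ_p}$ with coordinates $x_1, \dots, x_n$; then for any finite extension $K/\bQ_p$ the $p$-adic topology on $M(K)$ is the subspace topology from the induced injection $M(K) \hookrightarrow K^n$, and a subset of $K^n$ that is unbounded for the norm $z \mapsto \max_i |x_i(z)|$ cannot be compact, since its image in $\bR$ under this continuous map would then be bounded. So it suffices to exhibit $K$ together with a sequence of points in $M(K)$ on which some coordinate has $p$-adic absolute value tending to infinity.

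First I would reduce to the case of a curve. Choose an irreducible component $Z$ of $M_{\mathrm{red}}$ with $d := \dim Z \geq 1$; Noether normalization provides a finite surjective morphism $Z \to \bA^d_{\bQ_p}$. Pulling this back along a coordinate line $\bA^1 \hookrightarrow \bA^d$ gives a closed subscheme of $Z$ that is finite and surjective over $\bA^1_{\bQ_p}$, hence of pure dimension one; let $C \subseteq Z \subseteq M$ be one of its integral components of dimension one, equipped with the induced finite surjective morphism $\psi \colon C \to \bA^1_{\bQ_p}$, and set $\delta := [\bQ_p(C) : \bQ_p(t)]$, where $t$ denotes the coordinate on $\bA^1$.

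The heart of the argument is a uniform bound on the fibres of $\psi$. The coordinate ring $A := \cO(C)$ is a finitely generated torsion-free module over the principal ideal domain $\bQ_p[t]$, hence free, and of rank $\delta$ because $A \otimes_{\bQ_p[t]} \bQ_p(t) = \bQ_p(C)$. Therefore, for every $t_0 \in \bQ_p$ the fibre ring $A/(t-t_0)A$ is isomorphic to $\bQ_p^{\delta}$ as a $\bQ_p$-vector space; in particular it is nonzero, and being an Artinian $\bQ_p$-algebra of dimension $\delta$ all of its residue fields are extensions of $\bQ_p$ of degree at most $\delta$. Now I invoke the classical fact that a $p$-adic field has only finitely many extensions of bounded degree inside a fixed algebraic closure: letting $K$ be the compositum inside $\bar\bQ_p$ of all extensions of $\bQ_p$ of degree $\leq \delta$, the field $K$ is a finite extension of $\bQ_p$, and for every $t_0 \in \bQ_p$ the fibre $\psi^{-1}(t_0)$ carries a $K$-point. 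Writing $t$ as a polynomial with $\bQ_p$-coefficients in the restrictions of $x_1, \dots, x_n$ to $C$, and choosing $t_0 = p^{-m}$ with $m \to \infty$, we obtain points $c_m \in C(K) \subseteq M(K)$ with $|t(c_m)| = p^{m} \to \infty$; since a polynomial with $\bQ_p$-coefficients is bounded on bounded subsets of $K^n$ by the ultrametric inequality, the set $\{ c_m \}$ is unbounded in $K^n$. Hence $M(K)$ is non-compact, which is the desired contradiction.

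The only non-formal ingredient — and the reason the reduction to a curve is essential rather than cosmetic — is the uniform degree bound on the residue fields occurring in the fibres of $\psi$: over a base of dimension $\geq 2$ the coordinate ring need not be free, and the generic fibre degree gives no control over special fibres, whereas over $\bQ_p[t]$ freeness is automatic. Combined with the finiteness of bounded-degree extensions of $\bQ_p$, this is exactly what lets a single field $K$ work simultaneously for all $t_0$.
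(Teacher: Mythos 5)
Your proof is correct, and its engine is the same as the paper's: Noether normalization, the finiteness of the set of degree-$\leq d$ extensions of $\bQ_p$ inside $\bar\bQ_p$, and the non-compactness of the rational points of the affine line. The differences are in the packaging. The paper keeps the full finite surjection $f\colon M \to \bA^n_{\bQ_p}$ ($M$ integral, $n=\dim M$), shows $\bA^n(\bQ_p)\subseteq f(M(L))$ for a single finite extension $L$, and concludes by noting that $\bA^n(\bQ_p)$ is closed in $\bA^n(L)$ and would therefore be compact, forcing $n=0$; you instead slice down to an integral curve $C$ finite and surjective over a coordinate line and exhibit an explicit unbounded sequence of $K$-points over $t_0=p^{-m}$. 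Your reduction to a curve buys you freeness of $\cO(C)$ over the PID $\bQ_p[t]$ and hence an immediate bound $\delta$ on the residue field degrees in every fibre; the paper gets the analogous bound over $\bA^n$ directly, which is legitimate but rests on an unstated point (since $\bQ_p[t_1,\dots,t_n]$ is integrally closed and we are in characteristic zero, a primitive element of any residue field of a fibre over a $\bQ_p$-point satisfies a monic polynomial of degree $\leq \deg f$ with coefficients in the base ring). So your caution that the generic degree "gives no control over special fibres" in higher dimension is overstated — normality of the base rescues it — but your detour through a curve cleanly sidesteps the issue and makes the degree bound transparent. Both arguments are complete; yours is marginally more elementary at the final step, the paper's is marginally shorter.
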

\begin{proof}
Since the irreducible components of $M^{red}$ satisfy the assumptions of the lemma, it is sufficient to treat the case where $M$ is integral. In that case, by Noether normalization lemma there exists a finite surjective morphism $f \colon M \rightarrow \bA_{\bQ_p}^n$ for some non-negative integer $n$. If $f$ has degree $d$, then every element of $\bA_{\bQ_p}^n(\bQ_p)$ is the image of an element of $M(K)$ for some extension $K$ of $\bQ_p$ of degree $\leq d$. Since there are finitely many extensions of $\bQ_p$ of degree $\leq d$ in a fixed algebraic closure of $\bQ_p$, by taking their compositum one gets that $\bA_{\bQ_p}^n(\bQ_p)$ is contained in the image of $M(L)$ for a finite extension $L$ of $\bQ_p$. Since $M(L)$ is compact by assumption and $f$ is proper, its image $f(M(L))$ is a compact subset of $\bA_{\bQ_p}^n(L)$. But $\bA_{\bQ_p}^n(\bQ_p)$ is closed in $\bA_{\bQ_p}^n(L)$, so that $\bA_{\bQ_p}^n(\bQ_p) = \bA_{\bQ_p}^n(\bQ_p) \cap f(M(L))$ must be compact and necessarily $n =0$.
\end{proof}

\begin{lem}\label{criterion of finiteness-Sl_n case}
Let $\Gamma$ be a finitely generated group, $n$ a positive integer and $\Sigma$ a $\bQ$-Zariski constructible subset of $M_B(\Gamma,\SL_n )$. Assume that for a prime number $p$, the set $\Sigma(\bar \bQ)$ is contained in $M(\Gamma, \SL_n)(\bar \bQ_p)^o$. Then $\Sigma$ consists in finitely many points. 
\end{lem}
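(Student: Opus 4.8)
The plan is to pass to the Zariski closure of $\Sigma$ and show it is zero-dimensional, by feeding the $p$-adic density statement of Lemma \ref{a density lemma} into the closedness of the ``bounded'' locus and then invoking the compactness in Proposition \ref{properties of bounded characters} together with the dimension bound of Lemma \ref{p-adic compact affine varieties}.

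First I would let $\bar\Sigma$ denote the Zariski closure of $\Sigma$ inside $M_B(\Gamma,\SL_n)$, endowed with its reduced structure. It is an affine scheme of finite type over $\bQ$, it is a closed subscheme of $M_B(\Gamma,\SL_n)$ (hence, via the character embedding $M_B(\Gamma,\SL_n)\hookrightarrow \bA^r_\bQ$, a closed subscheme of $\bA^r_\bQ$), and $\Sigma$ is a constructible subset of $\bar\Sigma$ that is Zariski-dense in it. Applying Lemma \ref{a density lemma} to $X=\bar\Sigma$ shows that $\Sigma(\bar\bQ)$ is dense in $\bar\Sigma(\bar\bQ_p)$ for the $p$-adic topology. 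Since $\bar\Sigma$ is Zariski-closed, $\bar\Sigma(\bar\bQ_p)$ is $p$-adically closed in $M_B(\Gamma,\SL_n)(\bar\bQ_p)$, so it equals the $p$-adic closure of $\Sigma(\bar\bQ)$ there. As $\Sigma(\bar\bQ)\subset M(\Gamma,\SL_n)(\bar\bQ_p)^o$ by hypothesis and $M(\Gamma,\SL_n)(\bar\bQ_p)^o$ is $p$-adically closed in $M_B(\Gamma,\SL_n)(\bar\bQ_p)$ by Proposition \ref{properties of bounded characters}(1), taking closures yields $\bar\Sigma(\bar\bQ_p)\subset M(\Gamma,\SL_n)(\bar\bQ_p)^o$.

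Next, for every finite extension $K$ of $\bQ_p$ one gets $\bar\Sigma(K)\subset M(\Gamma,\SL_n)(\bar\bQ_p)^o\cap M_B(\Gamma,\SL_n)(K)$, which is compact by Proposition \ref{properties of bounded characters}(2); since $\bar\Sigma(K)$ is $p$-adically closed in $M_B(\Gamma,\SL_n)(K)$ (being cut out by polynomial equations), it is itself compact. Thus the affine scheme of finite type $\bar\Sigma_{\bQ_p}=\bar\Sigma\times_\bQ\bQ_p$ satisfies the hypothesis of Lemma \ref{p-adic compact affine varieties}, so $\dim\bar\Sigma_{\bQ_p}=0$, hence $\dim\bar\Sigma=0$. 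A zero-dimensional scheme of finite type over $\bQ$ consists of finitely many points, so $\bar\Sigma$ is finite and a fortiori so is $\Sigma$.

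As for the main difficulty: the substantive content is entirely packaged in Lemmas \ref{a density lemma} and \ref{p-adic compact affine varieties} and in Proposition \ref{properties of bounded characters}, so this last step is largely an assembly. The only points requiring care are that replacing $\Sigma$ by its Zariski closure $\bar\Sigma$ preserves constructibility and supplies the Zariski-density needed to apply Lemma \ref{a density lemma}, and that Zariski-closed subsets are $p$-adically closed, which is what licenses passing to $p$-adic closures and intersecting with the compact locus $M(\Gamma,\SL_n)(\bar\bQ_p)^o$.
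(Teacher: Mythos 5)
Your proposal is correct and follows essentially the same route as the paper: pass to the $\bQ$-Zariski closure, use Lemma \ref{a density lemma} together with the $p$-adic closedness from Proposition \ref{properties of bounded characters}(1) to trap all $\bar\bQ_p$-points of the closure in $M(\Gamma,\SL_n)(\bar\bQ_p)^o$, deduce compactness of the $K$-points via Proposition \ref{properties of bounded characters}(2), and conclude with Lemma \ref{p-adic compact affine varieties}. The only difference is that you spell out the intermediate closure-taking steps that the paper leaves implicit.
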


\begin{proof}
Let $M$ be the $\bQ$-Zariski closure of $\Sigma$ in $M_B(\Gamma,\SL_n)$. Since $M(\Gamma, \SL_n)(\bar \bQ_p)^o$ is closed in $M_B(\Gamma, \SL_n )(\bar \bQ_p)$, it follows from Lemma \ref{a density lemma} that $M(\bar \bQ_p)$ is contained in $M(\Gamma, \SL_n)(\bar \bQ_p)^o$. Therefore, thanks to Proposition \ref{properties of bounded characters}, $M(K)$ is compact for every finite extension $K$ of $\bQ_p$, and the conclusion follows from Lemma \ref{p-adic compact affine varieties}.
\end{proof}

\begin{thm}\label{criterion of finiteness}
Let $\Gamma$ be a finitely generated group, $G$ be a reductive $\bQ$-algebraic group and $\Sigma$ be a $\bQ$-Zariski constructible subset of $M_B(\Gamma, G)$. Assume that for a prime number $p$ , all the conjugacy classes of representations corresponding to elements in the set $\Sigma(\bar \bQ)$ have bounded image when seen as representations with values in $G(\bar \bQ_p)$.
Then $\Sigma$ consists in finitely many points. 
\end{thm}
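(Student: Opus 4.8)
The plan is to reduce to the case of $\SL_n$ treated in Lemma~\ref{criterion of finiteness-Sl_n case}. Since $G$ is reductive it admits a faithful $\bQ$-rational representation $G \hookrightarrow \GL_m$; composing with $\GL_m \hookrightarrow \SL_{m+1}$, $A \mapsto \mathrm{diag}(A,\det(A)^{-1})$, I obtain a closed immersion of $\bQ$-algebraic groups $\iota \colon G \hookrightarrow \SL_n$ with $n = m+1$. Post-composition with $\iota$ induces a $\bQ$-morphism $\iota_\ast \colon M_B(\Gamma, G) \to M_B(\Gamma, \SL_n)$ sending the class of a reductive representation $\rho$ to the class of $\iota\circ\rho$ (still reductive, since the Zariski closure of its image is $\iota$ of a reductive subgroup of $G$). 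By Chevalley's theorem $\Sigma' := \iota_\ast(\Sigma)$ is a $\bQ$-Zariski constructible subset of $M_B(\Gamma, \SL_n)$, and every $\bar\bQ$-point of $\Sigma'$ comes from a $\bar\bQ$-point of $\Sigma$: the fibre of $\iota_\ast$ above it is a non-empty scheme of finite type over a number field meeting $\Sigma$, hence has a closed point in $\Sigma$ whose residue field embeds into $\bar\bQ$.

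To transport the hypothesis I use that a closed immersion $\iota \colon G \hookrightarrow \SL_n$ of $\bQ$-groups induces a closed topological embedding $G(\bar\bQ_p) \hookrightarrow \SL_n(\bar\bQ_p)$, so a subgroup of $G(\bar\bQ_p)$ is bounded if and only if its image in $\SL_n(\bar\bQ_p)$ is bounded. Hence for $x \in \Sigma(\bar\bQ)$ the associated representations, bounded in $G(\bar\bQ_p)$ by assumption, are bounded in $\SL_n(\bar\bQ_p)$, so by Proposition~\ref{properties of bounded characters}(3) the point $\iota_\ast(x)$ lies in $M(\Gamma, \SL_n)(\bar\bQ_p)^o$. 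Thus $\Sigma'(\bar\bQ) \subseteq M(\Gamma, \SL_n)(\bar\bQ_p)^o$, and Lemma~\ref{criterion of finiteness-Sl_n case} shows that $\Sigma'$ consists of finitely many points. Since $\Sigma \subseteq \iota_\ast^{-1}(\Sigma')$, it now suffices to prove that $\iota_\ast$ is quasi-finite: then $\iota_\ast^{-1}(\Sigma')$ is finite, and so is $\Sigma$.

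The quasi-finiteness of $\iota_\ast \colon M_B(\Gamma, G) \to M_B(\Gamma, \SL_n)$ is the step I expect to be the main obstacle. I would work over an algebraically closed field and identify $G$ with its image in $\SL_n$. Fix a reductive $\rho_0 \colon \Gamma \to \SL_n$ whose image lies in $G$, and set $H_0 := \overline{\rho_0(\Gamma)}$, a reductive subgroup of $G$. Up to $G$-conjugacy, a reductive $\rho \colon \Gamma \to G$ with $\rho$ conjugate to $\rho_0$ in $\SL_n$ is of the form $u\rho_0 u^{-1}$ for some $u$ in the closed subvariety $Y := \{u \in \SL_n : uH_0 u^{-1} \subseteq G\}$, and two such $u$'s give $G$-conjugate representations precisely when they lie in one orbit of $G \times Z_{\SL_n}(H_0)$ acting by $(a,z)\cdot u = auz^{-1}$. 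Hence the fibre of $\iota_\ast$ over $[\rho_0]$ is the orbit space of $Y$ under this action, and it is enough to show that each orbit is open in $Y$, i.e. that $T_u Y \subseteq T_u\big(G\cdot u\cdot Z_{\SL_n}(H_0)\big)$ for every $u \in Y$.

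To verify this inclusion, differentiate the condition defining $Y$: a tangent vector to $Y$ at $u$ is, after conjugating by $u$, an $\eta \in \mathfrak{sl}_n$ with $(\Ad(h) - 1)\eta \in \mathrm{Lie}(G)$ for all $h \in uH_0u^{-1}$, whereas the tangent space to the orbit is, after the same identification, $\mathrm{Lie}(G) + \mathfrak{z}_{\mathfrak{sl}_n}(uH_0u^{-1})$. Since $uH_0u^{-1}$ is reductive, $\mathfrak{sl}_n$ is a semisimple $\Ad(uH_0u^{-1})$-module and $\mathrm{Lie}(G)$ is a submodule; decomposing $\eta$ along the isotypic components, its trivial part lies in $\mathfrak{z}_{\mathfrak{sl}_n}(uH_0u^{-1})$, and on each nontrivial isotypic component $U$ the condition $(\Ad(h)-1)\eta_U \in \mathrm{Lie}(G) \cap U$ forces $\eta_U \in \mathrm{Lie}(G) \cap U$ — because in a module without trivial subquotient the submodule generated by a vector $v$ coincides with the submodule generated by $\{(\Ad(h)-1)v\}_{h}$, and in particular contains $v$. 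Therefore $\eta \in \mathrm{Lie}(G) + \mathfrak{z}_{\mathfrak{sl}_n}(uH_0u^{-1})$, so $T_uY$ is contained in the tangent space to the orbit; thus $Y$ is a finite union of orbits, the fibres of $\iota_\ast$ are finite, and the theorem follows.
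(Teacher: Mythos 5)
Your proof is correct, and its overall skeleton is the same as the paper's: embed $G$ as a closed $\bQ$-subgroup of some $\SL_n$, push $\Sigma$ forward to a $\bQ$-constructible subset $\Sigma'$ of $M_B(\Gamma,\SL_n)$, check that boundedness of the image transports so that $\Sigma'(\bar\bQ)\subset M(\Gamma,\SL_n)(\bar\bQ_p)^o$, and conclude by Lemma \ref{criterion of finiteness-Sl_n case}. The one place where you genuinely diverge is the finiteness of the fibres of $M_B(\Gamma,G)\to M_B(\Gamma,\SL_n)$: the paper simply invokes Simpson's result (Proposition \ref{functoriality M_B}, \cite[Corollary 9.18]{Simpson-ModuliII}) that this morphism is \emph{finite}, whereas you reprove from scratch the weaker statement that it is \emph{quasi-finite} -- which is all that is needed -- by identifying the fibre over $[\rho_0]$ with the orbit space of $Y=\{u\in\SL_n : uH_0u^{-1}\subset G\}$ under $G\times Z_{\SL_n}(H_0)$ and showing every orbit is open via a tangent-space computation. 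That computation is sound: the isotypic decomposition of $\mathfrak{sl}_n$ under the reductive group $uH_0u^{-1}$ does force $\eta\in\mathrm{Lie}(G)+\mathfrak{z}_{\mathfrak{sl}_n}(uH_0u^{-1})$, and the auxiliary claim that a vector of a module with no trivial subquotient lies in the span of $\{(\Ad(h)-1)v\}_h$ is correct (that span is a submodule with trivial quotient $\langle v\rangle/W_v$). Two small points you should make explicit: the step from ``$T_uY\subseteq T_u(\text{orbit})$ for all $u$'' to ``each orbit is open in $Y$'' is not a tautology -- it follows because the orbit is a smooth locally closed subvariety with $\dim O_u=\dim T_uY\geq\dim_uY\geq\dim O_u$, forcing $Y$ to be smooth and irreducible at $u$ of the same dimension as $O_u$ -- and the finiteness of the number of orbits then uses quasi-compactness of $Y$. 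What your route buys is self-containedness (no appeal to Simpson's moduli paper) at the cost of about a page of linear algebra; what the paper's route buys is brevity and the stronger conclusion (finiteness, hence properness, of the comparison map), which is not needed for this theorem.
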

\begin{proof}
Since $G$ can be realized as a closed subgroup of $\GL_n / \bQ$ for some integer $n$ and since there is an embedding $\mathrm{GL}_n \subset  \SL_{n+1}$, $G$ is a closed subgroup of $\SL_{n+1} / \bQ$. The conjugacy class in $M_B(\Gamma , G)(\bar \bQ_p)$ of a representation $ \Gamma \rightarrow G(\bar \bQ_p)$ with bounded image is sent to the class in $M_B(\Gamma , \SL_{n+1})(\bar \bQ_p)$ of a representation $ \Gamma \rightarrow \SL_{n+1}(\bar \bQ_p)$ with bounded image through the natural map $M_B(\Gamma , G)(\bar \bQ_p) \rightarrow M_B(\Gamma, \SL_{n+1})(\bar \bQ_p)$. Since this map is finite by Proposition \ref{functoriality M_B}, it is sufficient to consider the case where $G = \SL_{n+1}$. But in that case, the assumptions imply that $\Sigma(\bar \bQ)$ is contained in $M(\Gamma, \SL_{n+1})(\bar \bQ_p)^o$ and Lemma \ref{criterion of finiteness-Sl_n case} gives the conclusion.
\end{proof}

\begin{prop}[Simpson {\cite[Corollary 9.18]{Simpson-ModuliII}}]\label{functoriality M_B}
Let $\Gamma$ be a finitely generated group and $G \rightarrow H$ be a homomorphism of complex reductive groups with finite kernel. Then the natural induced morphism $M_B(\Gamma , G) \rightarrow M_B(\Gamma, H)$ is finite.
\end{prop}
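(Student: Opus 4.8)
The plan is to reduce to two basic cases and treat each by invariant theory. First factor the homomorphism as $G \twoheadrightarrow G_1 \hookrightarrow H$, where $G_1$ is the image of $G$ --- a closed, hence reductive, subgroup of $H$: the first arrow is surjective with finite kernel, the second is a closed immersion, and since finiteness of morphisms is preserved under composition it is enough to treat these two cases. Fix generators $\gamma_1, \dots, \gamma_r$ of $\Gamma$, so that $R(\Gamma, G)$ is a closed subscheme of $G^r$, and likewise for $H$.

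In the surjective case, write $\pi \colon G \twoheadrightarrow H$ with finite kernel. Then $\pi^r \colon G^r \to H^r$ is a finite morphism, $R(\Gamma, G)$ is a closed subscheme of $(\pi^r)^{-1}(R(\Gamma, H))$, and the latter is finite over $R(\Gamma, H)$ by base change; hence $R(\Gamma, G) \to R(\Gamma, H)$ is finite. It is equivariant for the conjugation action of $G$ on the source and the action of $G$ through $\pi$ on the target. Since we are in characteristic zero, taking $G$-invariants preserves module-finiteness (the Reynolds operator is $\bC[R(\Gamma,H)]^G$-linear), and $\bC[R(\Gamma,H)]^G = \bC[R(\Gamma,H)]^H$ because $\pi$ is surjective; so $\bC[M_B(\Gamma,G)]$ is a finite $\bC[M_B(\Gamma,H)]$-module.

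In the closed-immersion case $\iota \colon G \hookrightarrow H$, I would use that $\bC[M_B(\Gamma, G)] = \bC[R(\Gamma,G)]^G$ is finitely generated over $\bC$ and, by classical invariant theory in the style of Procesi (the $\SL_n$-case being treated in the text), is generated by the trace functions $f_{W,w} \colon [\rho] \mapsto \mathrm{tr}\, W(\rho(w))$ with $W$ ranging over finite-dimensional representations of $G$ and $w$ over words in $\gamma_1^{\pm 1}, \dots, \gamma_r^{\pm 1}$. Given $(W, w)$, a standard Tannakian argument produces a finite-dimensional representation $U$ of $H$ with $W$ a subquotient of $U|_G$; then the characteristic polynomial of $W(\rho(w))$ divides that of $U(\iota(\rho(w)))$, which is a monic polynomial in $X$ whose coefficients are $H$-conjugation-invariant regular functions pulled back from $M_B(\Gamma, H)$ along $[\rho] \mapsto [\iota \circ \rho]$. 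Hence every root of that polynomial --- in particular their sum $f_{W,w}$ --- is integral over the image of $\bC[M_B(\Gamma, H)]$ in $\bC[M_B(\Gamma, G)]$. A finitely generated algebra whose generators are integral over a subring is module-finite over it, so $M_B(\Gamma, G) \to M_B(\Gamma, H)$ is finite.

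The hard part is the closed-immersion case: it relies on the generation of $\bC[R(\Gamma,G)]^G$ by trace functions for arbitrary reductive $G$, and on phrasing the "integral eigenvalue" step via the full characteristic polynomial of $W(\rho(w))$ (not via individual eigenvalues) so as to stay within regular functions. An alternative is to verify the valuative criterion of properness for $M_B(\Gamma, G) \to M_B(\Gamma, H)$ --- both spaces being affine, properness gives finiteness --- by noting that a point of $M_B(\Gamma, H)$ over a discrete valuation ring forces the corresponding semisimple representation into $H(\bar K)$, hence into the closed subgroup $G(\bar K)$, to be bounded, so by the Bruhat--Tits fixed point theorem it lies, after a finite base change, in a parahoric subgroup and extends as required; but this still rests on the same invariant-theoretic input.
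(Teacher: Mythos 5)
The paper does not actually prove this proposition --- it is quoted from Simpson's \emph{Moduli II}, Corollary 9.18 --- so there is no in-text argument to compare yours with; I will judge it on its own terms. Your reduction to the two cases $G \twoheadrightarrow G_1$ and $G_1 \hookrightarrow H$ is sound (the image of a reductive group in characteristic zero is a closed reductive subgroup), and the surjective case is correct: $R(\Gamma,G) \to R(\Gamma,H)$ is finite and equivariant, and applying the Reynolds operator to a monic integral equation of a $G$-invariant element shows that $\bC[R(\Gamma,G)]^G$ is integral, hence module-finite, over $\bC[R(\Gamma,H)]^H$.

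The closed-immersion case, however, has a genuine gap, located exactly where you flag ``the hard part'': the claim that $\bC[R(\Gamma,G)]^G$ is generated by the trace functions $[\rho]\mapsto \Tr W(\rho(w))$ is not classical invariant theory for arbitrary reductive $G$, and it is false in general. Procesi's theorem is specific to $\GL_n$ and $\SL_n$ (and, with modifications, to some other classical groups). For general reductive $G$ the trace functions need not even separate closed orbits in $R(\Gamma,G)$: by the phenomenon of element-conjugate but non-conjugate homomorphisms studied by M.~Larsen, there are reductive groups $G$ (for instance $\mathrm{SO}_{2n}$ for suitable $n$) admitting completely reducible $\rho_1,\rho_2\colon\Gamma\to G(\bC)$ with $\rho_2=\Ad(h)\circ\rho_1$ for an outer $h$ normalizing $G$, such that $\rho_1(\gamma)$ and $\rho_2(\gamma)$ are $G$-conjugate for every $\gamma$ while $\rho_1$ and $\rho_2$ are not. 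Element-conjugacy gives $\Tr W(\rho_1(w))=\Tr W(\rho_2(w))$ for every representation $W$ of $G$ and every word $w$ (these are class functions evaluated at the single elements $\rho_i(w)$), yet $[\rho_1]\neq[\rho_2]$ in $M_B(\Gamma,G)(\bC)$. Hence the trace subalgebra is a proper subalgebra, and your (otherwise correct) divisibility-of-characteristic-polynomials argument only proves that this proper subalgebra is finite over the image of $\bC[M_B(\Gamma,H)]$; it says nothing about the rest of $\bC[M_B(\Gamma,G)]$. To repair the proof you should drop trace generation of the source and argue properness directly: by cancellation you may take $H=\GL_N$ (if $M_B(\Gamma,G)\to M_B(\Gamma,\GL_N)$ is finite and the separated map $M_B(\Gamma,H)\to M_B(\Gamma,\GL_N)$ is interposed, the first factor is proper and affine, hence finite), and then the valuative-criterion route you sketch at the end --- integral traces force boundedness of the semisimple $\GL_N$-representation, boundedness passes to the closed subgroup $G$, and the Bruhat--Tits fixed-point theorem yields the required integral extension --- does close the argument. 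Contrary to your closing remark, that route only needs trace generation for the \emph{target} $\GL_N$, which is exactly Procesi's theorem, not the unavailable statement for general $G$.
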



\section{The Betti moduli space}

Let $X$ be a connected complex algebraic variety. For any reductive $\bQ$-algebraic group $G$ and any $x \in X$, we set $M_B(X,x,G) := M(\pi_1(X,x), G)$. This  is an affine $\bQ$-scheme of finite type. For another choice of base-point $x^\prime \in X$, the $\bQ$-scheme $M_B(X,x^\prime,G)$ is canonically isomorphic to $M_B(X,x,G)$, hence we won't specify the base-point in the sequel. For any positive integer $n$, we let $M_B(X, n) := M_B(X, \GL_n)$.\\

When $X$ is smooth and quasi-projective, it follows from the Corlette-Simpson-Mochizuki non-abelian Hodge correspondence that $M_B(X, n)(\bC)$ is equipped with a functorial continous action of $\bC^\ast$. The points fixed by this action are exactly the isomorphism classes of complex local system underlying a complex polarized variation of Hodge structures ($\bC$-VHS). See \cite{Simpson_Higgs, Mochizuki_asterisque}.

\begin{thm}\label{Simpson}
Let $X$ be a connected smooth quasi-projective algebraic variety and $n$ a positive integer. Let $M$ be a closed algebraic subset of $M_B(X, n)$. If its set of complex points $M(\bC)$ is $\bC^\ast$-invariant, then it contains a point that corresponds to a complex local system underlying a $\bC$-VHS. 
\end{thm}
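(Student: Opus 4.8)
The plan is to exploit the continuous $\bC^\ast$-action on $M_B(X,n)(\bC)$ coming from the non-abelian Hodge correspondence, together with the fact that each of its orbits acquires, as the parameter tends to $0$, a well-defined limit which is a $\bC^\ast$-fixed point. Recall that under the Corlette--Simpson--Mochizuki correspondence a point of $M_B(X,n)(\bC)$ corresponds to a tame polystable Higgs bundle $(E,\theta)$ (carrying a parabolic structure at infinity when $X$ is not proper), that the $\bC^\ast$-action is given on the Dolbeault side by $t\cdot(E,\theta)=(E,t\theta)$, and that its fixed points are exactly the classes of systems of Hodge bundles, i.e.\ the complex local systems underlying a $\bC$-VHS. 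This action is only continuous, not algebraic, so GIT-type arguments are unavailable and one must argue topologically.

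First I would reduce to producing a single $\bC^\ast$-fixed point in the orbit closure of an arbitrarily chosen point $m\in M(\bC)$ (we may assume $M\neq\emptyset$, there being nothing to prove otherwise). The key input, due to Simpson in the projective case \cite{Simpson_Higgs} and to Mochizuki in the quasi-projective case \cite{Mochizuki_asterisque}, is that the limit $m_0:=\lim_{t\to 0} t\cdot m$ exists in $M_B(X,n)(\bC)$ and is fixed by the $\bC^\ast$-action. On the Dolbeault side this asserts that $\lim_{t\to 0}(E,t\theta)$ exists in the moduli space and is a system of Hodge bundles; heuristically, since the characteristic polynomial of $t\theta$ tends to $0$, the family $(E,t\theta)$ stays in a compact region of the moduli space by the properness properties established by Simpson and Mochizuki, and one then identifies the (unique) limit object as a Hodge bundle via the associated limiting filtration.

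It then remains only to check that $m_0$ lies in $M$. Since $M(\bC)$ is assumed $\bC^\ast$-invariant, the entire orbit $\bC^\ast\cdot m$ is contained in $M(\bC)$; and since $M$ is a closed algebraic subset of the affine $\bQ$-scheme $M_B(X,n)$, the set $M(\bC)$ is closed in $M_B(X,n)(\bC)$ for the analytic topology. Hence $m_0\in M(\bC)$, and being $\bC^\ast$-fixed it corresponds to a complex local system underlying a $\bC$-VHS, as desired. The one genuinely substantial point is thus the existence and the Hodge-theoretic nature of the limit $m_0$: in the quasi-projective setting this is exactly where the full force of tame/parabolic non-abelian Hodge theory enters, and even in the projective case it is the nontrivial half of Simpson's theorem that $\bC^\ast$ acts with limits on the Dolbeault moduli space.
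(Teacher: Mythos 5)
Your overall strategy is the same as the paper's: deform a point of $M(\bC)$ to a $\bC^\ast$-fixed point using the non-abelian Hodge theoretic $\bC^\ast$-action, and use the fact that $M(\bC)$ is closed (for the analytic topology) and $\bC^\ast$-invariant to see that the fixed point stays in $M(\bC)$. That closure-plus-invariance step is correct and is exactly what the paper uses.

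However, there is a genuine overstatement in your key input, and it matters precisely in the quasi-projective setting the theorem is about. You assert that $m_0=\lim_{t\to 0}t\cdot m$ exists in $M_B(X,n)(\bC)$ \emph{and is fixed by} $\bC^\ast$, attributing this to Simpson and Mochizuki. In the projective case Simpson does prove existence of the limit on the Dolbeault moduli space (via properness of the Hitchin map), and the limit is a system of Hodge bundles. But in the quasi-projective case Mochizuki's Theorem 10.5 gives something weaker: the punctured-disk orbit $\Delta^\ast\cdot\rho$ is only \emph{relatively compact} in $M_B(X,n)(\bC)$, so one extracts a sequence $t_i\to 0$ with $\rho_1=\lim_i t_i\cdot\rho$ existing; this subsequential limit need \emph{not} be a $\bC^\ast$-fixed point, and one must iterate the procedure finitely many times before landing on a fixed point. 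Your argument, as written, skips both the passage to subsequences and the iteration, and your heuristic justification (properness of the characteristic polynomial map, unique limiting filtration) is the projective-case argument, which does not transfer verbatim to the tame/parabolic setting where the action on $M_B$ is only continuous. The gap is repairable: at each stage of the iteration the subsequential limit lies in the closed $\bC^\ast$-invariant set $M(\bC)$, so the eventual fixed point does too — but you need to state and use Mochizuki's actual result rather than the stronger limit statement.
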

In particular, every rigid semisimple local system underlies a $\bC$-VHS.

\begin{proof}
Thanks to Mochizuki, every complex local system can be deformed continuously to a $\bC$-VHS \cite[Theorem 10.5]{Mochizuki_asterisque}. More precisely, he proves the following: if $\rho \in M_B(X, n)(\bC)$, then $\Delta^\ast \cdot \rho$ is a relatively compact subset of $ M_B(X, n)(\bC)$, so that there exists a sequence $t_i \in \Delta^\ast$ converging to zero such that $\rho_1 = \lim_{i \to \infty} t_i \cdot \rho$ exists in $ M_B(X, n)(\bC)$. Moreover, after iterating this procedure a finite number of times, one gets a representation which is fixed by the $\bC^\ast$-action. Clearly, if one starts with $\rho \in M(\bC)$, the constructed $\bC$-VHS belongs to $M(\bC)$ too.
\end{proof}

\begin{prop}\label{locus-maximal}
Let $X$ be a connected algebraic variety and $G$ a reductive $\bQ$-group.
Let $\Sigma$ be a subset of $M_B(X, G)(\bC)$. Let $M \subset M_B(X, G)$ be the $\bQ$-Zariski closure of $\Sigma$, i.e. the smallest closed $\bQ$-subscheme of $M_B(X, G)$ whose set of complex points contains $\Sigma$. Then, the pair $(X, \Sigma)$ is maximal if and only if the pair $(X, M)$ is maximal.
\end{prop}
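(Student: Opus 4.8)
The plan is to reduce the equivalence to a statement about normal subgroups of $\pi_1(X)$. For a subset $\Theta\subseteq M_B(X,G)(\bC)$, write $H_\Theta\subseteq\pi_1(X)$ for the intersection of the kernels of the reductive representations $\pi_1(X)\to G(\bC)$ corresponding to the points of $\Theta$ (a well-defined normal subgroup, since the kernel of a representation depends only on its conjugacy class). Recall that, by definition (cf. Definition \ref{definition_maximality} and the discussion following Definition \ref{H-Shafarevich morphism}), the pair $(X,\Sigma)$ is maximal precisely when $(X,H_\Sigma)$ is maximal, and likewise $(X,M)$ is maximal precisely when $(X,H_{M(\bC)})$ is maximal. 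So it suffices to prove the equality of normal subgroups $H_\Sigma=H_{M(\bC)}$. The inclusion $H_{M(\bC)}\subseteq H_\Sigma$ is immediate from $\Sigma\subseteq M(\bC)$; the content is the reverse inclusion, and this is where the hypothesis that $M$ is the Zariski closure of $\Sigma$ over $\bQ$ (rather than just over $\bC$) enters.

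The crucial ingredient is a closed $\bQ$-subscheme $Z_\gamma\subseteq M_B(X,G)$ attached to each $\gamma\in\pi_1(X)$, characterized by the property that $Z_\gamma(\bC)$ is exactly the set of classes $[\rho]\in M_B(X,G)(\bC)$ whose associated reductive representation kills $\gamma$. I would construct it as follows. Let $q_\gamma\colon\pi_1(X)\twoheadrightarrow\Gamma_\gamma$ be the quotient of $\pi_1(X)$ by the normal subgroup generated by $\gamma$; this $\Gamma_\gamma$ is again finitely generated, and a representation of $\pi_1(X)$ factors through $q_\gamma$ if and only if it kills $\gamma$. At the level of representation schemes, $q_\gamma$ identifies $R(\Gamma_\gamma,G)$ with the closed $G$-invariant $\bQ$-subscheme $\{\rho:\rho(\gamma)=e\}$ of $R(\pi_1(X),G)$, namely the fibre over $e$ of the evaluation morphism at $\gamma$. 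Since $G$ is reductive, the formation of $G$-invariants is exact, so the induced surjection of coordinate rings descends to a surjection on $G$-invariants (cf. \cite[Chapter 1 \S 2]{Mumford-GIT}); equivalently, the induced morphism $M_B(\Gamma_\gamma,G)\to M_B(\pi_1(X),G)=M_B(X,G)$ is a closed immersion defined over $\bQ$, and I let $Z_\gamma$ be its image. On complex points this morphism sends the class of a reductive representation of $\Gamma_\gamma$ to the class of its composition with $q_\gamma$ (which is again reductive, having the same image), so $Z_\gamma(\bC)$ is precisely the set of classes whose reductive representative kills $\gamma$, as wanted.

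Granting this, the rest is formal. For any $\Theta\subseteq M_B(X,G)(\bC)$ one has $\gamma\in H_\Theta$ if and only if $\Theta\subseteq Z_\gamma(\bC)$. Now if $\gamma\in H_\Sigma$, then $\Sigma\subseteq Z_\gamma(\bC)$; since $Z_\gamma$ is a closed $\bQ$-subscheme of $M_B(X,G)$ whose complex points contain $\Sigma$, and since $M$ is by definition the smallest such subscheme, we get $M\subseteq Z_\gamma$, hence $M(\bC)\subseteq Z_\gamma(\bC)$, i.e. $\gamma\in H_{M(\bC)}$. Ranging over all $\gamma\in\pi_1(X)$ yields $H_\Sigma\subseteq H_{M(\bC)}$, hence $H_\Sigma=H_{M(\bC)}$, and the proposition follows.

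The step I expect to be the main obstacle is the precise identification of $Z_\gamma(\bC)$, rather than the closedness of $Z_\gamma$ by itself. The subtlety is that the reductive representation attached to a point of $M_B(X,G)$ need not send $\gamma$ to a semisimple element of $G$, so "$\gamma$ acts trivially" cannot be read off a conjugation-invariant condition on eigenvalues; one must pass through the quotient $\Gamma_\gamma$ — equivalently, through the closed $G$-invariant locus $\{\rho:\rho(\gamma)=e\}$ in the representation scheme, together with a Hilbert--Mumford degeneration of an arbitrary representation in a fibre onto its closed orbit — and use the reductivity of $G$ to ensure that the resulting locus in the GIT quotient is closed and defined over $\bQ$. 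Everything else is formal manipulation of the $\bQ$-Zariski closure.
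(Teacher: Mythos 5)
Your proof is correct and follows essentially the same route as the paper's: the key point in both is that the locus of classes of reductive representations killing a fixed element of $\pi_1(X)$ is a closed $\bQ$-subscheme of $M_B(X,G)$ (your $Z_\gamma$; the paper's $M_v$, defined only for the boundary loops $\gamma_v$ that actually enter the definition of maximality), so that minimality of the $\bQ$-Zariski closure $M$ transfers the vanishing condition from $\Sigma$ to $M(\bC)$. Your reformulation via the equality of normal subgroups $H_\Sigma = H_{M(\bC)}$ is a mild strengthening, but it rests on the same construction and the same use of the reductivity of $G$ to descend the closed invariant locus to the GIT quotient.
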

\begin{proof}
Let $\bar X$ be a connected complex algebraic variety $\bar X$ equipped with an open immersion $X \to \bar X$. Let $v \colon \Delta \to \bar X$ be a holomorphic map such that $v(\Delta^\ast) \subset X$ and $v(0) \notin X$. Let $x := v(\frac{1}{2})$. The continuous map $v_{|\Delta^\ast}$ induces a homomorphism of groups $\bZ = \pi_1(\Delta^\ast, \frac{1}{2}) \to \pi_1(X, x)$ and a $G$-equivariant $\bQ$-algebraic map 
\[R(\pi_1(X,x), G) \to R(\bZ, G) = G.\]
Its fiber at $1_G \in G$ is a $G$-invariant closed $\bQ$-subscheme of $R(\pi_1(X,x), G)$, hence it is the preimage of a closed $\bQ$-subscheme $M_v \subset M_B(X, G)$, cf. \cite[Theorem 1.1]{Mumford-GIT}.\\
Let us now proceed to the proof of the proposition. If $(X, \Sigma)$ is maximal, then a fortiori the pair $(X, M)$ is maximal. If $(X, \Sigma)$ is not maximal, then by definition there exists a holomorphic map $v \colon \Delta \to \bar X$ as above such that $\Sigma \subset M_v(\bC)$. Since $M_v$ is a closed $\bQ$-subscheme of $M_B(X, G)$, it follows from the definition of $M$ that $M \subset M_v$. Therefore, the pair $(X, M)$ is not maximal.
\end{proof}
 
\section{A generalization of a result of Lasell and Ramachandran}

Let $X$ be a connected smooth quasi-projective variety. Let $Z$ be a smooth projective (but non-necessarily connected) variety equipped with a morphism $Z \to X$. We assume that the image $Y$ of $Z \to X$ is a  connected proper variety. (For example, $Y$ could be a connected proper subvariety of $X$ and $Z \to Y$ be the disjoint union of a desingularization of every irreducible components of $Z$.) We will keep these notations for the remaining of this section.

\begin{lem}[{Compare with \cite[Lemma 2.1]{Lasell-Ramachandran}}]\label{Lemma_Lasell-Ramachandran}
\begin{enumerate}
\item Let $\rho \colon \pi_1(X) \to \GL_n(\bC)$ be a reductive representation. Assume that for every connected component $Z_i$ of $Z$, the induced representation $\rho_{Z_i} \colon \pi_1(Z_i) \to \GL_n(\bC)$ has finite image. Then $\rho_{Y} \colon \pi_1(Y) \to \GL_n(\bC)$ has bounded image.
\item  Let $p$ be a prime number. Let $\rho \colon \pi_1(X) \to \GL_n(\bar \bQ_p)$ be a reductive representation. Assume that for every connected component $Z_i$ of $Z$, the induced representation $\rho_{Z_i} \colon \pi_1(Z_i) \to \GL_n(\bar \bQ_p)$ has bounded image. Then $\rho_{Y} \colon \pi_1(Y) \to \GL_n(\bar \bQ_p)$ has bounded image.
\end{enumerate}
\end{lem}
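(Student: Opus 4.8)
The plan is to reduce both statements to a statement about the compact variety $Y$ and its normalization, and then exploit the fact that a subgroup of $\GL_n$ over a local field (or $\bC$) is bounded/relatively compact as soon as it acts on a suitable reductive quotient with bounded image, using the key mechanism that a reductive representation whose restriction to the fibres of a proper fibration is bounded descends. More concretely, the two parts are formally parallel — in part (1) the archimedean field $\bC$ plays the role of $\bar\bQ_p$ and ``relatively compact'' replaces ``bounded'' — so I would treat them uniformly by letting $k$ denote either $\bC$ or $\bar\bQ_p$ and ``bounded'' mean bounded in the appropriate topology (relatively compact, in the $\bC$ case, after noting $\GL_n(\bC)$ is not compact but the relevant orbit closures are).

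First I would replace $Z$ by the disjoint union of desingularizations of the irreducible components of the normalization $\bar Y$ of $Y$; since $Z \to Y$ is surjective and $\pi_1$ of a desingularization surjects onto $\pi_1$ of the normalization of the corresponding component, the hypothesis that each $\rho_{Z_i}$ has finite (resp. bounded) image implies the same for the restriction of $\rho_Y$ to each component of $\bar Y$, and via van Kampen (the normalization map $\bar Y \to Y$ is surjective with the components glued along the preimages of the singular locus) one gets that $\rho_Y|_{\pi_1(\bar Y)}$, hence a finite-index-related statement, controls $\rho_Y$ up to the finite amount of monodromy picked up by the gluing. The cleaner route: it suffices to prove that $\rho_Y \colon \pi_1(Y) \to \GL_n(k)$ has bounded image knowing that its restriction to the image of $\pi_1(Z_i)$ is bounded for each $i$, and this is a statement purely about the finitely generated group $\pi_1(Y)$ generated (up to finite index) by the images of the $\pi_1(Z_i)$.

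The core step is the following: $\rho_Y$ is reductive (being the restriction of a reductive representation of $\pi_1(X)$, one may pass to the reductive quotient / semisimplification without changing the boundedness question, since an extension of bounded-image representations need not be bounded over $\bar\bQ_p$ — so here I must be careful and instead argue directly). I would argue via the Betti moduli space: the conjugacy class of $\rho_Y$ defines a point of $M_B(Y, \GL_n)(k)$, and the hypothesis says this point lies in the (Zariski-closed, by properness of the maps $M_B(Y,\GL_n) \to \prod_i M_B(Z_i,\GL_n)$ and boundedness of each factor) preimage of the bounded loci. Then, in the $\bar\bQ_p$ case, one invokes that a point of the character variety all of whose coordinates (trace functions) are integral corresponds to a representation with bounded image provided the representation is semisimple — and the semisimplification of $\rho_Y$ has the same traces, so its image is bounded; one then upgrades to $\rho_Y$ itself using that $Y$ is compact Kähler, so the Corlette--Simpson theory forces a reductive representation of $\pi_1(Y)$ whose semisimplification is bounded to itself be conjugate into a bounded subgroup (the harmonic/pluriharmonic map to the Bruhat--Tits building associated to $\rho_Y$ lands in a bounded set because it does so on generators). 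In the $\bC$ case the analogous input is that a reductive representation of the fundamental group of a compact Kähler manifold with relatively compact image on a generating set of subgroups is relatively compact.

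The main obstacle I anticipate is precisely the passage from ``the semisimplification (equivalently: all trace functions) is bounded'' to ``$\rho_Y$ is bounded'': over $\bar\bQ_p$ a unipotent-by-bounded representation can be unbounded, so boundedness of traces alone is insufficient and one genuinely needs the geometric input that $Y$ is a compact Kähler variety. The right tool is the existence of an equivariant pluriharmonic map from the universal cover of (a desingularization of) $Y$ to the Bruhat--Tits building of $\GL_n$ over $K$, as in \cite{Gromov-Schoen, Mochizuki_asterisque, Brotbek-Daskalopoulos-Deng-Mese}: since $Y = \mathrm{image}(Z)$ with each $\pi_1(Z_i)$ having bounded image, the composed maps from the $Z_i$ are bounded, the pluriharmonic map is then bounded on a set of loops generating $\pi_1(Y)$ up to finite index, and a bounded-image-on-generators reductive representation into $\GL_n(K)$ has bounded image. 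I would record this building-theoretic step as the heart of the argument; the reduction to $\bar Y$ and the Betti-moduli packaging are routine by comparison.
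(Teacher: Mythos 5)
There is a genuine gap in your endgame, and it sits exactly at the point the lemma is designed to handle. You assert that $\pi_1(Y)$ is generated ``up to finite index'' by the images of the $\pi_1(Z_i)$, and later that a reductive representation into $\GL_n(K)$ that is bounded on a generating family of subgroups has bounded image. Both claims fail. For the first: take $Y$ to be two copies of $\bP^1$ glued at two points; then $\pi_1(Y)\cong\bZ$ while each component (and each $Z_i$) is simply connected, so the images of the $\pi_1(Z_i)$ generate the trivial subgroup. The loops coming from the dual complex of the components of $Y$ are precisely what the lemma must control, and they are invisible both to the $\pi_1(Z_i)$ and to $\pi_1$ of a desingularization (or normalization) of $Y$ --- so running a pluriharmonic map on a desingularization of $Y$, as you propose, loses exactly this information. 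For the second: in $\SL_2(\bQ_p)$ the stabilizers of two distinct vertices of the Bruhat--Tits tree are compact but generate the whole (unbounded) group, so ``bounded on each generator/generating subgroup'' does not imply ``bounded''. Consequently the final fixed-point step of your plan does not go through as stated.

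The paper's proof avoids this by never leaving the ambient smooth quasi-projective $X$. For part (1) it takes the $\rho$-equivariant pluriharmonic map $f\colon\tilde X^\rho\to\Omega$ into the symmetric space of maximal compacts of $\GL_n(\bC)$ (Mochizuki): each component of $Z_i\times_X\tilde X^\rho$ is compact, hence contracted to a point by $f$; a connected component of $Y\times_X\tilde X^\rho$ is a \emph{connected} union of such compact pieces, hence is sent to a single point of $\Omega$, and by equivariance that point is fixed by $\rho(\pi_1(Y))$, which is therefore contained in a maximal compact. It is this connectedness-of-the-fibre-product argument, not generation of $\pi_1(Y)$, that captures the gluing loops. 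For part (2) the paper simply invokes the Katzarkov--Zuo reduction $\sigma_X^\rho\colon X\to S_X^\rho$ of the pair $(X,\rho)$: each $Z_i\to S_X^\rho$ is constant, $Y$ is connected and covered by the images of the $Z_i$, so $Y\to S_X^\rho$ is constant, and the defining property of the reduction applied to the connected variety $Y\to X$ gives boundedness of $\rho_Y$ outright --- this also disposes of the semisimplification worry you rightly raise, since the Katzarkov--Zuo property makes no reductivity assumption on the restricted representation. Your instinct that harmonic maps to buildings are the heart of the matter is correct, but they must be deployed on $X$ (or packaged as the Katzarkov--Zuo reduction), with connectivity of $Y\times_X\tilde X^\rho$ doing the work that your generation claim cannot.
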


\begin{proof}
\begin{enumerate}
\item Let $\Omega$ denote the symmetric space of maximal compact subgroups of $\GL_n(\bC)$, on which $\GL_n(\bC)$ acts by conjugation. Thanks to Mochizuki \cite[Theorem 25.28]{Mochizuki}, there exists a $\rho$-equivariant pluriharmonic map $f \colon \tilde{X}^\rho \to \Omega$. By assumption, the irreducible components of $Z_i \times_X \tilde{X}^\rho$ are compact, therefore each of them is sent to a point in $\Omega$  by the pluriharmonic map $f$. It follows that the connected components of $Y \times_X \tilde{X}^\rho$ are also to a point in $\Omega$  by the pluriharmonic map $f$. Moreover, due to the $\rho$-equivariance of $f$, this point must be a fixed point for the induced action of $\pi_1(Y)$ on $\Omega$. This shows that $\rho_Y$ is bounded.
\item Let $\sigma_X^{\rho} \colon X \rightarrow S_X^{\rho}$  be a Katzarkov-Zuo reduction of the pair $(X, \rho)$, which exists thanks to Theorem \ref{Katzarkov-Zuo-smooth-case}. By assumption, the composite map $Z_i \to X \to S_X^{\rho}$ is constant for every $i$, hence the composite map $Y  \to X \to S_X^{\rho}$ is also constant.  This implies in turn that $\rho_{Y}$ has bounded image.
\end{enumerate}
\end{proof}

\begin{thm}[{Compare with \cite[Theorem 4.1]{Lasell-Ramachandran}}]\label{Theorem_Lasell-Ramachandran}
For every positive integer $n$, there exists a finite quotient $\Delta_n$ of $\pi_1(Y)$ such, if $\rho \colon \pi_1(X) \to \GL_n(\bC)$  is a reductive representation whose pull-back to every connected component of $Z$ is the trivial representation, then the induced representation $\rho_{Y} \colon \pi_1(Y) \to \GL_n(\bC)$ factorizes through $\Delta_n$.
\end{thm}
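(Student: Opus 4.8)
The plan is to reduce the statement to the finiteness criterion of Theorem~\ref{criterion of finiteness} applied to a suitable constructible subset of a character variety associated to $\pi_1(Y)$. First I would fix $n$ and consider the $\bQ$-scheme $M_B(Y, n) = M_B(Y, \GL_n)$. The representations $\rho_Y$ arising from reductive $\rho \colon \pi_1(X) \to \GL_n(\bC)$ whose pull-back to every connected component $Z_i$ of $Z$ is trivial form a subset of $M_B(Y,n)(\bC)$; more precisely, since the image $Y$ of $Z \to X$ is proper and each $Z_i \to Y$ is a desingularization of a component, the restriction maps assemble into an algebraic morphism of character varieties, and the condition ``$\rho_{Z_i}$ trivial for all $i$'' cuts out a constructible (indeed closed) subset. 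Let $\Sigma \subset M_B(Y, n)$ be the $\bQ$-Zariski closure of the set of all such $\rho_Y$, and let $\Gamma := \pi_1(Y)/\bigcap \ker$, where the intersection runs over the kernels of these $\rho_Y$; equivalently, work directly with $M_B(Y,n)$ and argue that $\Sigma$ is $\bC^\ast$-invariant (by Simpson's functorial $\bC^\ast$-action, the class of $\rho$ deforms within the reductive locus while the triviality of the restriction to each $Z_i$ is preserved under the deformation, since a trivial local system stays trivial), so by Theorem~\ref{Simpson} the argument can be run one $\bC$-VHS at a time.

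Next I would invoke Lemma~\ref{Lemma_Lasell-Ramachandran}: its first part shows that every $\rho_Y$ in our family has \emph{bounded} image in $\GL_n(\bC)$ --- but boundedness over $\bC$ alone is not enough for finiteness. The key is to pass to a $p$-adic place. Using that $M_B(Y,n)$ and the relevant loci are defined over $\bQ$, and that a $\bC$-point with bounded image can be spread out and reduced, I would argue (as in Lemma~\ref{criterion of finiteness-Sl_n case} and Proposition~\ref{properties of bounded characters}) that for a suitable prime $p$ every point of $\Sigma(\bar\bQ)$, viewed via $\GL_n \hookrightarrow \SL_{n+1}$ as a representation into $\SL_{n+1}(\bar\bQ_p)$, has bounded image: indeed part (2) of Lemma~\ref{Lemma_Lasell-Ramachandran} shows that if $\rho_{Z_i}$ has bounded image over $\bar\bQ_p$ for every $i$ then $\rho_Y$ does too, and a representation pulled back trivially to each $Z_i$ certainly has bounded (trivial) image there. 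Then Theorem~\ref{criterion of finiteness} forces $\Sigma$ to consist of finitely many points.

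Finally, a constructible subset of $M_B(Y,n)$ consisting of finitely many points, all corresponding to \emph{reductive} representations, means there are only finitely many isomorphism classes of semisimple representations $\pi_1(Y) \to \GL_n(\bC)$ occurring as such $\rho_Y$; since $\pi_1(Y)$ is finitely generated, each such $\rho_Y$ has image a finitely generated linear group, and I would use a Burnside/Jordan-type argument together with the fact (from Lemma~\ref{Lemma_Lasell-Ramachandran}(1)) that these images are bounded --- hence, being finitely generated bounded subgroups of $\GL_n$ over a number field, they are finite --- to conclude that each such $\rho_Y$ has finite image. Taking $\Delta_n$ to be the quotient of $\pi_1(Y)$ by the intersection of the kernels of this finite list of finite-image representations yields a finite quotient through which every $\rho_Y$ in the family factors, as desired.

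The main obstacle I expect is the reduction from ``bounded image over $\bC$'' to finiteness: a single bounded representation over $\bC$ need not have finite image, so the whole point is to organize the family into a $\bQ$-constructible set and apply the $p$-adic finiteness criterion, which requires checking that boundedness holds simultaneously at a $p$-adic place for \emph{all} $\bar\bQ$-points of the Zariski closure --- this is exactly where Lemma~\ref{a density lemma} and the closedness of the bounded-character locus (Proposition~\ref{properties of bounded characters}) must be combined carefully with Lemma~\ref{Lemma_Lasell-Ramachandran}(2) and the preservation of triviality on the $Z_i$ under the relevant spreading-out. A secondary subtlety is verifying that the restriction-to-$Z_i$ maps genuinely define $\bQ$-morphisms of character varieties so that the locus ``trivial on each $Z_i$'' is $\bQ$-constructible.
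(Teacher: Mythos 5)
Your overall strategy is the same as the paper's: form the locus $M\subset M_B(X,n)$ of reductive representations trivial on every $Z_i$, push it forward to a $\bQ$-constructible subset $N\subset M_B(Y,n)$ by Chevalley, use Lemma \ref{Lemma_Lasell-Ramachandran}(2) to see that $N(\bar\bQ_p)$ lands in the bounded locus, and apply Theorem \ref{criterion of finiteness} to conclude that $N$ is finite. (Your worry about boundedness on the whole Zariski closure is already absorbed into the proof of Theorem \ref{criterion of finiteness} via Lemma \ref{a density lemma} and Proposition \ref{properties of bounded characters}, so taking $\Sigma$ to be the constructible image rather than its closure suffices; and the digression about $\bC^\ast$-invariance and Theorem \ref{Simpson} is not needed anywhere in this argument.)

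However, your final step contains a genuine gap. You claim that each $\rho_Y$ has finite image because it is ``a finitely generated bounded subgroup of $\GL_n$ over a number field.'' This is false with only archimedean boundedness: the subgroup of $\GL_1(\bQ(i))\subset\GL_1(\bC)$ generated by $(3+4i)/5$ is finitely generated, bounded, defined over a number field, and infinite; no Burnside/Jordan-type argument applies since the image is not known to be torsion. What is actually needed, and what the paper does, is to use \emph{both} parts of Lemma \ref{Lemma_Lasell-Ramachandran}: part (2) at every prime $p$ shows the image is bounded at all non-archimedean places, hence (after summing over a $\mathrm{Gal}(\bar\bQ/\bQ)$-stable set of lifts of the finitely many points of $N(\bar\bQ)$ to descend to $\bQ$) conjugate into $\GL_{nr}(\bZ)$; combined with the archimedean boundedness from part (1), a bounded subgroup of the discrete group $\GL_{nr}(\bZ)$ is finite. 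With that correction your argument closes up and coincides with the paper's proof.
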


\begin{proof}
By Lemma \ref{Lemma_Lasell-Ramachandran}, if $\rho \colon \pi_1(X) \to \GL_n(\bC)$ is a reductive representation whose pull-back to every connected component $Z_i$ of $Z$ is the trivial representation, then $\rho_{Y} \colon \pi_1(Y) \to \GL_n(\bC)$ has bounded image. In particular, $\rho_{Y} $ is a reductive representation. Let $M \subset M_B(X, n)$ be the intersection of the fibers at the trivial representation of the morphisms $M_B(X, n ) \to M_B(Z_i, n)$. Therefore, $M$ is a closed subscheme of $M_B(X,n)$, whose $\bC$-points parametrize the conjugacy classes of reductive representations $\rho \colon \pi_1(X) \to \GL_n(\bC)$ that pull-back to the trivial representation on every $Z_i$. Let $N$ denote the image of $M$ by the algebraic morphism $M_B(X, n ) \to M_B(Y, n)$. By Chevalley's Theorem, it is a constructible subset of $M_B(Y,n)$. A priori, $N(\bC)$ is the set of conjugacy classes of semisimplifications of representations pulled-back from $M(\bC)$. However, it follows from the previous paragraph that $N(\bC)$ is in fact exactly the set of conjugacy classes of representations of $\pi_1(Y)$ pulled-back from by $M(\bC)$. On the other hand, it follows from Lemma \ref{Lemma_Lasell-Ramachandran} that for every prime $p$ one has $N(\bar \bQ_p) \subset M_B(Y,n)(\bar \bQ_p)^o$. Therefore, thanks to Theorem \ref{criterion of finiteness}, $N$ is a closed subscheme of $M_B(Y,n)$ of dimension zero. In particular, $N(\bC)$ is a finite set, showing that there are only finitely many conjugacy classes of reductive representations $\rho \colon \pi_1(X) \to \GL_n(\bC)$ that pull-back to the trivial representation on every $Z_i$. Finally, let ${\{ \rho_i \colon \pi_1(Y) \rightarrow \GL_n(\bar \bQ) \}}_{i = 1 \ldots r}$ be a finite set of reductive representations stable by $\mathrm{Gal}(\bar \bQ \slash \bQ)$-conjugation that lifts the finitely many points of $N(\bC) = N(\bar \bQ)$. Let $\rho_N \colon \pi_1(Y) \to \GL_{nr}(\bar \bQ)$ be the sum of the $\rho_i$. It follows from the $\mathrm{Gal}(\bar \bQ \slash \bQ)$-invariance that $\rho_N$ takes its values in $ \GL_{nr}(\bQ)$. Moreover, thanks to Lemma \ref{Lemma_Lasell-Ramachandran}, the image of $\rho_N$ is bounded and conjugated to a subgroup of $ \GL_{nr}(\bZ)$. Therefore, the image $\Delta_n$ of $\rho_N$ is the searched finite quotient of $\pi_1(Y)$.
\end{proof}

\begin{cor}\label{Corollary_Lasell-Ramachandran}
For every positive integer $n$, let $H_n$ be the intersection of the kernels of all reductive representations $\rho \colon \pi_1(X) \to \GL_n(\bC)$ that pull-back to the trivial representation on every connected component of $Z$. Then the image of $\pi_1(Y)$ in $ \pi_1(X) \slash H_n$ is finite.
\end{cor}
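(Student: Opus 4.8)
The plan is to deduce the corollary formally from Theorem \ref{Theorem_Lasell-Ramachandran}. First, apply that theorem to the given $n$ to obtain a finite quotient $q \colon \pi_1(Y) \twoheadrightarrow \Delta_n$ with the stated property, and set $N := \ker q$, a normal subgroup of finite index in $\pi_1(Y)$. Write $\iota_\ast \colon \pi_1(Y) \to \pi_1(X)$ for the homomorphism induced by the inclusion $Y \hookrightarrow X$. Note also that $H_n$, being an intersection of kernels of representations (hence of normal subgroups), is normal in $\pi_1(X)$, so that $\pi_1(X)/H_n$ is a group and the statement makes sense.

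The key step is to show $\iota_\ast(N) \subseteq H_n$. Fix an arbitrary reductive representation $\rho \colon \pi_1(X) \to \GL_n(\bC)$ whose pull-back to every connected component of $Z$ is trivial. By the conclusion of Theorem \ref{Theorem_Lasell-Ramachandran}, the induced representation $\rho_Y = \rho \circ \iota_\ast \colon \pi_1(Y) \to \GL_n(\bC)$ factorizes through $q$; in particular $N = \ker q \subseteq \ker(\rho \circ \iota_\ast)$, i.e. $\iota_\ast(N) \subseteq \ker \rho$. Since this holds for every such $\rho$ and $H_n$ is by definition the intersection of all these kernels, we conclude $\iota_\ast(N) \subseteq H_n$.

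Finally, consider the composite homomorphism $\pi_1(Y) \xrightarrow{\iota_\ast} \pi_1(X) \to \pi_1(X)/H_n$; its image is precisely the image of $\pi_1(Y)$ in $\pi_1(X)/H_n$ that we must bound, and by the previous step its kernel contains $N$. Hence this image is a quotient of $\pi_1(Y)/N = \Delta_n$, which is finite, proving the corollary.

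I do not expect any genuine obstacle here: all the substance is already contained in Theorem \ref{Theorem_Lasell-Ramachandran} (which in turn rests on Lemma \ref{Lemma_Lasell-Ramachandran} and the finiteness criterion of Theorem \ref{criterion of finiteness}), and the corollary is a purely formal consequence. The only points requiring minor care are the bookkeeping between $N$ and $\iota_\ast^{-1}(H_n)$, and the observation that the intersection defining $H_n$ is taken over a possibly uncountable family of representations, which is harmless since intersections of normal subgroups are again normal subgroups.
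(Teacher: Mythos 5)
Your proof is correct and is exactly the intended deduction: the paper states Corollary \ref{Corollary_Lasell-Ramachandran} without proof as an immediate formal consequence of Theorem \ref{Theorem_Lasell-Ramachandran}, and your argument (that $N = \ker(\pi_1(Y) \twoheadrightarrow \Delta_n)$ lies in $\ker(\rho\circ\iota_\ast)$ for every admissible $\rho$, hence in $\iota_\ast^{-1}(H_n)$, so the image of $\pi_1(Y)$ in $\pi_1(X)/H_n$ is a quotient of $\Delta_n$) supplies precisely the missing bookkeeping. No gaps.
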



\section{Proof of Theorem \ref{main theorem}}

\subsection{A special case}\label{A special case}
Let $X$ be a connected smooth quasi-projective algebraic variety. Let $n$ be a positive integer. Let $M \subset M_B(X, n)$ be a closed algebraic subvariety defined over $\bQ$, whose set of complex points $M(\bC)$ is invariant under the $\bC^\ast$-action on $M_B(X, n)(\bC)$. \\

Assuming that the pair $(X, M)$ is maximal, we prove in the sequel the existence of the $M$-Shafarevich morphism.\\

For every $\rho \in M(\bar \bQ)$ and every prime number $p$, let $\rho_p$ denote the image of $\rho$ by the inclusion $M(\bar \bQ) \subset M(\bar \bQ_p)$, and let $ \sigma_X^{\rho_p} \colon X \rightarrow S_X^{\rho_p}$ be a Katzarkov-Zuo reduction of $(X, \rho_p)$. We denote by $\sigma_X^{M} \colon X \rightarrow S_X^{M}$ the map $X \to \prod_{\rho \in M(\bar \bQ)} \prod_{\text{p prime}} S_X^{\rho_p}$. Note that the fibers $\sigma_X^{M}$ are algebraic (since they are the possibly infinite intersection of fibers of algebraic maps). In fact, by Noetherianity, the map obtained by composing $\sigma_X^{M}$ with the projection onto finitely many well-choosen factors will have the same fibers as $\sigma_X^{M}$. \\

Let $M(\bC)^{\mathrm{VHS}}$ denote the subset of $M(\bC)$ composed by the fixed points for the $\bC^\ast$-action. It is non-empty thanks to Theorem \ref{Simpson}. Fix a point $x \in X$, and for every element of $M(\bC)^{\mathrm{VHS}}$, choose a representative $\rho \colon \pi_1(X,x) \rightarrow \GL_n(\bC) $. Let $W_{\rho}/ \bR$ be the real Zariski closure of the image of $\pi_1(X,x)$, so that $\rho$ factorizes as $\rho \colon \pi_1(X,x) \rightarrow W_{\rho}(\bR) \subset \GL_n(\bC)$. By assumption, the real Lie group $W_{\rho}(\bR)$ acts transitively on a period domain $\cD_\rho$, so that the stabilizer of a point in $\cD_{\rho}$ is a compact subgroup, and there exists a period map $\tilde X \rightarrow \cD_{\rho} $ which is equivariant with respect to $\rho \colon \pi_1(X,x) \rightarrow W_{\rho}(\bR) $. By taking their products, we obtain a map $\tilde X \rightarrow \prod_{[\rho] \in M(\bC)^{\mathrm{VHS}}} \cD_{\rho} $, which is equivariant with respect to the homomorphism $ \pi_1(X,x) \rightarrow \prod_{[\rho] \in M(\bC)^{\mathrm{VHS}}} W_{\rho}(\bR) $.\\

Finally, putting the archimedean places and the non-archimedean places together, we get a map $ \tilde X \rightarrow \prod_{[\rho] \in M(\bC)^{\mathrm{VHS}}} \cD_{\rho} \times S_X^{M} $. Letting $H := \cap_{[\rho] \in M(\bC)}  \ker{\rho}$ and ${\tilde{X}^M} := \tilde{X}^H$, the preceding map factorizes through $\tilde X \rightarrow {\tilde{X}^M}$, so that we obtain a map 
\[ \phi_{(X,x)}^M \colon {\tilde{X}^M} \rightarrow \prod_{[\rho] \in M(\bC)^{\mathrm{VHS}}} \cD_{\rho}  \times S_X^{M}.\]
 Observe that for another choice of base point $x ^\prime \in X$, the choice of a (homotopy class of a) path from $x$ to $x^\prime$ induces an isomorphism between $\pi_1(X,x)$ and $\pi_1(X, x^\prime)$, and an automorphism of ${\tilde{X}^M}$ over $X$, such that the $\pi_1(X,x)$-equivariant map $\phi_{(X,x)}$ gets identified with the $\pi_1(X, x^\prime)$-equivariant map $\phi_{(X,x^\prime)}$. \\

In view of Theorem \ref{equivalent_Shafa}, Theorem \ref{main theorem} is a consequence of the following result. 
\begin{thm}\label{key theorem}
Consider the map 
\[ \phi_X^M \colon {\tilde{X}^M} \rightarrow \prod_{[\rho] \in M(\bC)^{\mathrm{VHS}}} \cD_{\rho}  \times S_X^{M} .\]
Then:
\begin{enumerate}
\item Every connected compact analytic subspace of ${\tilde{X}^M}$ is contained in a fibre of $\phi_X^M$.
\item The connected components of the fibres of $\phi_X^M$ are compact. 
\end{enumerate}
\end{thm}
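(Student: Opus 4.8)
The plan is to treat the two assertions in turn, both by reduction to properties we have already established for Katzarkov--Zuo reductions, for period maps of VHS, and for the finiteness statement of Theorem \ref{criterion of finiteness} (through its consequence Corollary \ref{Corollary_Lasell-Ramachandran}).

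For assertion (1), let $Z \subset {\tilde{X}^M}$ be a connected compact analytic subspace; we want to show $\phi_X^M$ contracts $Z$. We may assume $Z$ is irreducible, and after passing to the normalization $\bar Z$ and to a desingularization we may assume $Z$ is smooth projective. Let $Y \subset X$ be the image of $Z$ under ${\tilde{X}^M} \to X$: it is a compact analytic subvariety, hence algebraic by Chow, and proper and connected. Because $Z$ lies in ${\tilde{X}^M}$, the pullback to $Z$ of \emph{every} $\rho \in M(\bC)$ has image contained in $\pi_1(X)/H$ acting trivially on the relevant fiber; more precisely, for each such $\rho$ the composite $\pi_1(Z) \to \pi_1(X) \to \GL_n(\bC)$ is trivial, since ${\tilde{X}^M} = \tilde X^H$ with $H = \cap_{[\rho]} \ker \rho$ and $Z$ is a compact subspace of a connected component of the cover. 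Now apply Lemma \ref{Lemma_Lasell-Ramachandran}: for each $\rho \in M(\bC)^{\mathrm{VHS}}$ (archimedean place) the restriction $\rho_Y$ has bounded image in $W_\rho(\bR)$, so the period map $\tilde X \to \cD_\rho$ is constant on the preimage of $Y$ — indeed the image of $\pi_1(Y)$ fixes a point of $\cD_\rho$ since its image is contained in a compact subgroup, and by equivariance of the period map together with connectedness of the preimage of $Y$ in $\tilde X^M$, this forces constancy on $Z$. Similarly, for each $\rho_p$ the restriction to $Y$ has bounded image, so by the defining property of the Katzarkov--Zuo reduction the composite $Y \to X \to S_X^{\rho_p}$ is constant, hence so is $Z \to S_X^M$. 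Combining, $\phi_X^M$ is constant on $Z$.

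For assertion (2), let $F$ be a connected component of a fibre of $\phi_X^M$, and let $\bar F \to F$ be a proper desingularization of (the closure of) $F$; we must show $F$ is compact, for which it suffices, by properness of $\phi_X^M$ onto its image over an appropriate algebraic base, to control the fundamental group. The point is that the factor $S_X^M$ is, up to Noetherian reduction, a \emph{finite} product of algebraic Katzarkov--Zuo reductions, so the image of $F$ in $X$ is an algebraic subvariety $Y$ (a fibre, or a component of a fibre, of an algebraic map $\sigma_X^M$ up to that reduction). Since $\phi_X^M$ is constant on $F$, for every $\rho_p$ the restriction $\rho_p|_{\pi_1(Y)}$ has bounded image (being constant after the Katzarkov--Zuo reduction), and for every $\rho \in M(\bC)^{\mathrm{VHS}}$ the period map is constant on $Y$, so $\rho|_{\pi_1(Y)}$ has image in a compact subgroup of $W_\rho(\bR)$; by rigidity/semicontinuity (applying Theorem \ref{Simpson} to the $\bC^\ast$-invariant closed subscheme $M' \subset M$ cut out by the condition of bounded restriction to $Y$, and then Theorem \ref{criterion of finiteness} / Corollary \ref{Corollary_Lasell-Ramachandran}) one deduces that the image of $\pi_1(Y)$ in $\pi_1(X)/H_n = \pi_1(X)/H$ is finite. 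Here the maximality of the pair $(X, M)$ enters exactly as in Proposition \ref{proper_period_map} to guarantee, place by place, that the relevant maps are proper, so that $\bar F$ maps properly to a point-set and $F$ is forced to be compact (a non-compact $F$ would, by maximality, produce a disk $v\colon \Delta \to \bar{\tilde X}^M$ with boundary going to infinity along which some monodromy is non-constant, contradicting that $\phi_X^M$ is constant on $F$).

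The main obstacle is assertion (2), and within it the passage from ``$\pi_1(Y)$ has bounded image under every representation in $M$, at every place'' to ``$F$ is actually compact''. Boundedness at all places only yields that the image of $\pi_1(Y)$ in $\pi_1(X)/H$ is finite (this is where Theorem \ref{criterion of finiteness} and the Lasell--Ramachandran-type Corollary \ref{Corollary_Lasell-Ramachandran} do real work, ruling out the a priori infinite family of representations to be controlled); one then still has to leverage \emph{maximality} of $(X,M)$ to exclude non-compact fibres, exactly in the spirit of Griffiths' properness criterion used in Proposition \ref{proper_period_map}, but now simultaneously for the archimedean period maps and the non-archimedean Katzarkov--Zuo reductions. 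Making this simultaneous properness argument precise — in particular checking that a hypothetical puncture of a fibre of $\phi_X^M$ contradicts Definition \ref{definition_maximality} applied to the maximal pair $(X,M)$ — is the technical heart of the proof; the rest is assembling previously established functoriality (Propositions \ref{pull-back_maximal}, \ref{Shafarevich: from smooth to normal}) and the Stein factorization formalism of Theorem \ref{equivalent_Shafa}.
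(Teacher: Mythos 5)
Your treatment of assertion (1) is essentially the paper's argument (pull back to a desingularization, note that every $\rho\in M(\bC)$ restricts trivially, conclude constancy of the Katzarkov--Zuo reductions from the defining property and constancy of the period maps from triviality of the monodromy on a compact base), although your justification for the archimedean constancy via ``the image fixes a point of $\cD_\rho$'' is not by itself a proof; the correct one-liner is that a period map with trivial monodromy on a connected compact manifold is constant.

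Assertion (2) is where your proposal breaks down. You assert that, $F$ being a fibre component of $\phi_X^M$, its image in $X$ is a fibre component $Y$ of the algebraic map $\sigma_X^M$, that the period maps are constant on $Y$, and that consequently the image of $\pi_1(Y)$ in $\pi_1(X)/H$ is finite. This is false: $Y$ is only a fibre of the \emph{non-archimedean} part, so the restrictions to $Y$ of the representations in $M$ are bounded at every finite place but need not have finite image --- the archimedean period maps are in general non-constant on $Y$. The paper's proof has an essential two-step structure that you collapse: first, $Y$ being a fibre of all Katzarkov--Zuo reductions forces the image $N_i$ of $M(\bar\bQ)$ in $M_B(Z_i,n)$ (for $Z_i$ a desingularized component of $Y$) to be \emph{finite} by Theorem \ref{criterion of finiteness}; second, each of these finitely many restricted representations is lifted, via Theorem \ref{Simpson}, to a $\bC$-VHS on $X$, and the product monodromy group $\Gamma_i$ is shown to be \emph{discrete} because it is Galois-stable and bounded at all finite places, hence conjugate into $\GL_{nr_i}(\bZ)$. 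Only then can one form the quotient $\Gamma_i\backslash\prod_j\cD$, apply Griffiths' properness criterion (using maximality of $(Z_i,N_i)$) to the genuine period map $Z_i\to\Gamma_i\backslash\prod_j\cD$, and obtain a \emph{proper subvariety} $W_i\subsetneq Z_i$ whose fundamental group has finite image in $\pi_1(X)/H_M$; Corollary \ref{Corollary_Lasell-Ramachandran} then glues the $W_i$ into a connected proper $W$ with the same finiteness, and $F$ is compact as a closed subset of a compact component of $W\times_X\tilde X^H$. Your proposal omits the discreteness/integrality step entirely, misidentifies the subvariety on which the finiteness of the monodromy image holds ($W$, not $Y$), and explicitly defers the ``simultaneous properness'' argument --- which is precisely the content of the proof --- to an unexecuted puncture-versus-maximality heuristic. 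As written, the proposal does not establish assertion (2).
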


\begin{proof}[Proof of Theorem \ref{key theorem}]
We start by proving the first assertion. Let $Y$ be a connected compact analytic subspace of ${\tilde{X}^M}$. It is harmless to assume that $Y$ is irreducible. Let $f \colon Y \to X$ be the holomorphic map defined as the composition of the inclusion $Y \subset {\tilde{X}^M}$ and the projection ${\tilde{X}^M} \to X$. Then $f$ is proper (since $Y$ is compact) and has discrete fibres, hence it is a finite holomorphic map. Its image $Z := f(Y)$ is a compact closed analytic subspace of the algebraic variety $X$, hence it is a closed algebraic subvariety of $X$ by applying Chow's theorem to an algebraic compactification of $X$. By GAGA, there exists a unique algebraic structure on $Y$ such that the map $Y \to Z$ is algebraic.\\
Let $W \to Y$ be a desingularization of $Y$. Since by definition the map $f \colon Y \to X$ factorizes through ${\tilde{X}^M}$, the image of $M(\bC)$ under the natural algebraic map $M_B(X, n) \rightarrow M_B(W, n)$ is trivial. It follows that for every (conjugacy class of) representation $\rho \in M(\bar \bQ)$ and every prime number $p$, the Katzarkov-Zuo reduction of $(X, \rho_p)$ is constant on $W$, hence on $Y$. Moreover, since every period map with trivial monodromy on a connected compact complex manifold is constant, the first assertion follows. \\

Let us now prove the second assertion. Let $F$ be a connected component of a fibre of $\phi_X^M$; we want to prove that $F$ is compact. Observe that $F$ is a fortiori contained in a fibre of ${\tilde{X}^M} \rightarrow S_X^{M}$. In fact, since $F$ is connected, it is contained in the preimage of a connected component $Y$ of a fibre of $X \rightarrow S_X^{M}$. Note that $Y$ is a connected closed algebraic subvariety of $X$. As observed before, we can freely move the base point $x \in X$ and assume that $x \in Y$.\\

Let $\{Y_i\}$ denote the irreducible components of $Y$. For every $i$, let $Z_i \to Y_i$ be a proper desingularization of $Y_i$. Let $N_i$ be the image of $M(\bar \bQ)$ under the natural algebraic map $M_B(X, n) \rightarrow M_B(Z_i, n)$. It is a $\bQ$-constructible subset of $M_B(Z_i,n)$ by Chevalley's Theorem. Moreover, by definition of $Y$, the representations corresponding to points in $N_i$ have bounded image when seen as representations with values in $\GL_n(\bar \bQ_p)$ for every prime $p$. Therefore, thanks to Theorem \ref{criterion of finiteness}, the $\bQ$-constructible set $N_i$ has dimension zero. It follows that there exists a number field $K \subset \bar \bQ$ and a finite set of representations ${\{ \rho^j_i \colon \pi_1(Z_i) \rightarrow \GL_n(K) \}}_{j = 1 \ldots r_i}$ stable by $\mathrm{Gal}(\bar \bQ \slash \bQ)$-conjugation that lifts the finitely many points of $N_i$.\\

Every point of $N_i$ is the image of at least one connected component of $M(\bC)$. Since $M(\bC)$ is closed in $M_B(X, n)(\bC)$ and $\bC^\ast$-invariant, the connected components of $M(\bC)$ are also closed in $M_B(X, n)(\bC)$ and $\bC^\ast$-invariant (since $\bC^\ast$ is connected). It follows from Theorem \ref{Simpson} that they all contain a $\bC^\ast$-fixed point. Therefore, for every $\rho^j_i$, there exists a representation $\prescript{X}{}{\rho}^j_i \colon \pi_1(X) \rightarrow \GL_n(\bC)$ underlying a polarized complex variation of Hodge structures such that the composition with $\pi_1(Z_i) \rightarrow \pi_1(X)$ is equal to the composition of $\rho^j_i \colon \pi_1(Z_i) \rightarrow \GL_n(K)$ with the inclusion $\GL_n(K) \subset \GL_n(\bC)$.\\

The image $\Gamma_i$ of the representation $ \prod_{j= 1}^{r_i} \rho_i^j$ is discrete. Indeed, it is contained in $\prod_{j= 1}^{r_i} \GL_n(K)$, stable by $\mathrm{Gal}(\bar \bQ \slash \bQ)$-conjugation and bounded at every non-archimedean places. Therefore, through the canonical inclusion $\prod_{j= 1}^{r_i} \GL_n(\bC) \subset \GL_{n r_i}(\bC)$, it is contained in a subgroup conjugated to $\GL_{nr_i}(\bZ)$ \cite{Bass}. Hence we can form the following commutative diagram:
\begin{align*}
\xymatrix{
Z_i \times_X \tilde{X}^M \ar[r] \ar[d] &  \prod_{j= 1}^{r_i} \cD_{\prescript{X}{}{\rho}^j_i }  \ar[d] \\
Z_i  \ar[r]       & \Gamma_i \backslash \left(\prod_{j= 1}^{r_i} \cD_{\prescript{X}{}{\rho}^j_i }  \right) } 
\end{align*}

Let $W_i$ denote the fibre of the map $Z_i  \to  \Gamma_i \backslash \left(\prod_{j= 1}^{r_i} \cD_{\prescript{X}{}{\rho}^j_i }  \right)$ that contains the projection of  $F \times_{ \tilde{X}^M} Z_i \subset Z_i \times_X \tilde{X}^M$. Since the pair $(Z_i, N_i)$ is maximal thanks to Proposition \ref{pull-back_maximal}, $W_i$ is proper thanks to Griffiths' criterion \cite[Theorem 9.5]{GriffithsIII}. By construction, the image of $\pi_1(W_i)$ in $\pi_1(X) \slash H_M$ is finite. Let $T_i$ be a (non-necessarily connected) smooth projective variety equipped with a surjective morphism $T_i \to W_i$ such that the image of $\pi_1(T_i)$ in $\pi_1(X) \slash H_M$ is trivial. \\

Let $W \subset Y$ be  the union of the images of the $W_i$'s. It is a proper subvariety of $Y$ that contains the projection of $F$ in $X$. Up to replacing $W$ by its connected component that contains the projection of $F$, on can assume that $W$ is connected. Thanks to Corollary \ref{Corollary_Lasell-Ramachandran}, it follows that the image of $\pi_1(W)$ in $\pi_1(X) \slash H_M$ is finite. Therefore, the connected components of $W \times_X \tilde{X}^H$ are compact. Since $F$ is closed in one of the latter, it follows that $F$ is compact.
\end{proof}

\subsection{The general case}\label{The general case}

We start with a result of independant interest. Its proof is a direct consequence of Lemma \ref{finite_motivic} below.

\begin{prop}\label{Shafa-reduction-motivic}
Let $X$ be a connected smooth quasi-projective algebraic variety. Let $n$ be a positive integer. Let $\Sigma$ be a subset of $M_B(X, n)(\bC)$. Let $M \subset M_B(X, n)$ be the smallest closed subscheme defined over $\bQ$, whose set of complex points $M(\bC)$ contain $\Sigma$ and is $\bC^\ast$-invariant. Then, the $\Sigma$-Shafarevich morphism $\sh_X^\Sigma$ exists if and only if the $M$-Shafarevich morphism $\sh_X^M$ exists. In that case, $\sh_X^\Sigma = \sh_X^M$.
\end{prop}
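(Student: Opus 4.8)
The plan is to prove the two equivalences via the key structural result Proposition~\ref{Shafa-reduction-motivic}. First, I would establish the easy direction: the $\Sigma$-Shafarevich morphism, when it exists, is determined by the equivalence relation on $X$ whose classes are the connected components of the fibres. Since the subset $M(\bC)$ differs from $\Sigma$ only by passing to the $\bC^\ast$-orbit closure and the $\bQ$-Zariski closure, the Katzarkov-Zuo reductions attached to the points of $M(\bar\bQ)$ and to the points of the closure of $\Sigma$ produce the same fibration data; and by Theorem~\ref{Simpson} the $\bC^\ast$-fixed points inside $M(\bC)$ suffice to detect the period-map directions. Concretely, I would invoke Lemma~\ref{finite_motivic} to argue that the kernel $H_\Sigma := \cap_{[\rho]\in\Sigma}\ker\rho$ and the kernel $H_M := \cap_{[\rho]\in M(\bC)}\ker\rho$ are commensurable (indeed, that the induced quotients $\pi_1(X)/H_\Sigma$ and $\pi_1(X)/H_M$ have the same finite-image subvarieties), so that a connected proper $Z \to X$ has finite image in $\pi_1(X)/H_\Sigma$ if and only if it has finite image in $\pi_1(X)/H_M$. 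By the defining property of the Shafarevich morphism in Definition~\ref{H-Shafarevich morphism}, this forces $\sh_X^\Sigma = \sh_X^M$ whenever either exists.

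Next I would spell out why the \emph{existence} of one implies existence of the other, not merely their coincidence. In one direction this is formal: if $\sh_X^M$ exists, it is a holomorphic fibration satisfying the property of Definition~\ref{H-Shafarevich morphism} for the pair $(X, H_M)$, and since $H_M$ and $H_\Sigma$ detect the same finite-image proper subvarieties, the same map satisfies the property for $(X, H_\Sigma)$, hence it \emph{is} $\sh_X^\Sigma$. Conversely, suppose $\sh_X^\Sigma$ exists. The content of Proposition~\ref{maximal extension} and the special case treated in Section~\ref{A special case} is that one already knows how to produce $\sh_X^M$ once $(X, M)$ is maximal; but here no maximality hypothesis is assumed in Proposition~\ref{Shafa-reduction-motivic}. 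So the correct argument is purely formal in this direction too: the fibration $\sh_X^\Sigma$ satisfies the defining property for the pair $(X, H_M)$ by the same commensurability, hence witnesses the existence of $\sh_X^M$ and equals it.

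The main step — and the one I expect to be the real obstacle — is therefore Lemma~\ref{finite_motivic}, i.e.\ the assertion that enlarging $\Sigma$ to the smallest $\bC^\ast$-invariant $\bQ$-closed subscheme $M$ does not change which proper subvarieties have finite image. The subtle point is that a priori $M(\bC)$ contains many more local systems than $\Sigma$: all the $\bC^\ast$-deformations (in particular $\bC$-VHS limits) and all the Galois conjugates, and one must check that the intersection of the kernels of all of these is commensurable to $\cap_{[\rho]\in\Sigma}\ker\rho$. For the Galois-conjugate enlargement this follows because a $\bQ$-point of $M_B$ corresponds to a Galois-stable finite set of representations whose sum is defined over $\bQ$ and has the same kernel as the original up to finite index (as in the proof of Theorem~\ref{Theorem_Lasell-Ramachandran}). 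For the $\bC^\ast$-deformation enlargement one uses that the restriction to a proper $Z$ of a continuous family of semisimple representations that is trivial (resp.\ bounded $p$-adically) at one member stays trivial (resp.\ bounded) along the family — this is exactly the rigidity phenomenon packaged in Lemma~\ref{Lemma_Lasell-Ramachandran} together with the constancy of period maps with trivial monodromy on compact $Z$. Combining these, the finite-image locus for $\pi_1(Z)$ is cut out identically by $\Sigma$ and by $M$, which is what Lemma~\ref{finite_motivic} should record; the rest of Proposition~\ref{Shafa-reduction-motivic} is then bookkeeping with Definition~\ref{H-Shafarevich morphism} and the uniqueness statement following it.
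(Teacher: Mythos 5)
Your reduction is the right one and matches the paper's: the defining property in Definition~\ref{H-Shafarevich morphism} only sees, for each connected proper $Z \to X$, whether the image of $\pi_1(Z)$ in the relevant quotient of $\pi_1(X)$ is finite, so once one knows that $H_\Sigma$ and $H_M$ impose the same finiteness condition on such $Z$ (Lemma~\ref{finite_motivic}), existence and equality of the two Shafarevich morphisms are formal. You correctly identify that lemma as the crux. One correction on the formal part: you do not need, and should not claim, that $H_\Sigma$ and $H_M$ are commensurable --- that is a statement about all of $\pi_1(X)$, it is not what Lemma~\ref{finite_motivic} asserts, and there is no reason for it to hold; the relativized statement for proper $Z \to X$ is both what is true and all that is used.

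The genuine gap is in your justification of the key lemma, specifically the $\bC^\ast$-step. The mechanism is not a ``rigidity along continuous families'' phenomenon and it is not Lemma~\ref{Lemma_Lasell-Ramachandran}; it is the \emph{minimality} of $M$, which your argument never actually deploys. The paper's Lemma~\ref{trivial_motivic} runs as follows: for $Y$ smooth projective mapping to $X$ with $\pi_1(Y)$ dying in $\pi_1(X)/H_\Sigma$, the preimage $N$ of the trivial representation under $M_B(X,n) \to M_B(Y,n)$ is a closed $\bQ$-subscheme containing $\Sigma$ whose complex points are $\bC^\ast$-invariant (functoriality of the $\bC^\ast$-action plus the fact that the trivial representation is a fixed point); minimality then forces $M \subset N$, i.e.\ \emph{every} point of $M(\bC)$ restricts trivially to $Y$. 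Trying instead to track triviality through each enlargement step (Galois conjugation, $\bC^\ast$-orbits, $\bQ$-Zariski closure) is delicate --- the $\bC^\ast$-action is only continuous, not algebraic, so it does not obviously commute with taking Zariski closures --- and your appeal to Lemma~\ref{Lemma_Lasell-Ramachandran} (a boundedness statement) does not supply this. Finally, to pass from ``trivial on a desingularized component of a finite étale cover of $Z$'' back to ``finite image of $\pi_1(Z)$ in $\pi_1(X)/H_M$'' one must invoke the uniform finiteness of Corollary~\ref{Corollary_Lasell-Ramachandran}: pointwise finiteness of $\rho(\pi_1(Z))$ for each $\rho \in M(\bC)$ does not by itself bound the image in the quotient by the intersection of all the kernels. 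You gesture at this for the Galois conjugates, but it is needed for all of $M(\bC)$ at once.
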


\begin{lem}\label{trivial_motivic}
Let $Y$ be a connected smooth projective variety equipped with a morphism $Y  \to X$. Then, the image of $\pi_1(Y)$ in $\pi_1(X) \slash H_\Sigma$ is trivial if and only if the image of $\pi_1(Y)$ in $\pi_1(X) \slash H_M$ is trivial.
\end{lem}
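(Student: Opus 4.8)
The key relationship between $H_\Sigma$ and $H_M$ is that $M(\bC)$ is, by construction, built from $\Sigma$ by taking $\bQ$-Zariski closure and closing up under the $\bC^\ast$-action, both operations that can only \emph{enlarge} the collection of representations. So one inclusion should be essentially tautological: $H_M \subset H_\Sigma$, since $\Sigma \subset M(\bC)$ forces the intersection of kernels defining $H_M$ to be contained in the one defining $H_\Sigma$; hence triviality of the image of $\pi_1(Y)$ in $\pi_1(X)/H_M$ immediately implies triviality in $\pi_1(X)/H_\Sigma$. The substance of the lemma is the reverse implication, and the plan is to argue that the extra representations picked up when passing from $\Sigma$ to $M(\bC)$ do not detect anything new on $\pi_1(Y)$ when the pull-back is already trivial.

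\textbf{Strategy for the nontrivial direction.} Suppose the image of $\pi_1(Y)$ in $\pi_1(X)/H_\Sigma$ is trivial; I want to show every representation $\rho \in M(\bC)$ pulls back trivially to $\pi_1(Y)$, which is what triviality of the image in $\pi_1(X)/H_M$ means. First handle the $\bQ$-Zariski closure step: the condition ``$\rho$ restricted to $\pi_1(Y)$ is trivial'' cuts out a closed $\bQ$-subscheme of $M_B(X,n)$ (it is the preimage, under the restriction morphism $M_B(X,n)\to M_B(Y,n)$, of the trivial representation, which is a single $\bQ$-point — or more carefully, one should intersect the fibers at the trivial representation of the maps $M_B(X,n)\to M_B(Z_i,n)$ over a desingularization, exactly as in the proof of Theorem \ref{Theorem_Lasell-Ramachandran}). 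Since this closed $\bQ$-subscheme contains $\Sigma$, it contains $M$ by minimality of $M$; so every $\bC$-point of $M$ in the Zariski closure of $\Sigma$ pulls back trivially. Second, handle the $\bC^\ast$-action step: the locus of representations pulling back trivially to $\pi_1(Y)$ is $\bC^\ast$-invariant, because the $\bC^\ast$-action on $M_B(X,n)(\bC)$ is functorial with respect to the restriction map to $M_B(Y,n)(\bC)$ and fixes the trivial representation (a $\bC$-VHS with trivial monodromy is fixed). Combining: the smallest closed $\bQ$-subscheme with $\bC^\ast$-invariant complex points containing $\Sigma$ — which is exactly $M$ — lies inside the trivial-pull-back locus. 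Hence every $\rho\in M(\bC)$ is trivial on $\pi_1(Y)$, giving $H_M \supset H_\Sigma$ on the relevant quotient and the claimed equivalence.

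\textbf{Main obstacle.} The delicate point is making precise that the ``trivial pull-back'' locus is simultaneously (a) \emph{closed and defined over $\bQ$} and (b) \emph{$\bC^\ast$-invariant}, so that minimality of $M$ applies. For (a) the subtlety is that the honest restriction morphism $M_B(X,n)\to M_B(Y,n)$ when $Y$ is singular may record only semisimplifications, so — following the trick in Theorem \ref{Theorem_Lasell-Ramachandran} — one should instead use a desingularization $W\to Y$ (or the disjoint union of desingularizations of the irreducible components) and take the scheme-theoretic fiber over the trivial representation of $M_B(X,n)\to M_B(W,n)$; since $\pi_1(W)\to\pi_1(Y)$ is surjective, triviality on $\pi_1(W)$ is equivalent to triviality on $\pi_1(Y)$, and this fiber is genuinely closed and $\bQ$-defined. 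For (b), $\bC^\ast$-invariance needs the functoriality of the Corlette--Simpson--Mochizuki $\bC^\ast$-action under pull-back along $W\to X$ together with the fact that the trivial local system is a $\bC$-VHS fixed by $\bC^\ast$; these are standard but should be cited (as in the discussion preceding Theorem \ref{Simpson}). Once both properties are in hand, the argument is just an application of the defining minimality of $M$, and no further computation is required.
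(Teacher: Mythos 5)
Your proposal is correct and follows essentially the same route as the paper: the forward direction is the tautological inclusion $H_M\subset H_\Sigma$, and for the converse the paper likewise takes $N$ to be the preimage of the trivial representation under $M_B(X,n)\to M_B(Y,n)$, observes that $N$ is a closed $\bQ$-subscheme containing $\Sigma$ with $\bC^\ast$-invariant complex points (by functoriality of the $\bC^\ast$-action and the fact that the trivial representation is fixed), and concludes $M\subset N$ by minimality. Your worry about desingularizations is moot here since $Y$ is assumed smooth projective, so the paper dispenses with that step.
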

\begin{proof}
Assume that the image of $\pi_1(Y)$ in $\pi_1(X) \slash H_\Sigma$ is trivial. Let $N$ be the preimage of the trivial representation by the algebraic morphism $M_B(X, n) \to M_B(Y, n)$. Then $N$ is a closed subscheme of $M_B(X, n)$, whose set of complex points $N(\bC)$ contain $\Sigma$ and is $\bC^\ast$-invariant. Therefore, $M \subset N$. In other words, the image of $\pi_1(Y)$ in $\pi_1(X) \slash H_M$ is trivial. Since $H_M \subset H_\Sigma$, the converse is obvious.
\end{proof}

\begin{lem}\label{finite_motivic}
Let $Z$ be a connected proper variety equipped with a morphism $Z  \to X$. Then, the image of $\pi_1(Z)$ in $\pi_1(X) \slash H_\Sigma$ is finite if and only if the image of $\pi_1(Z)$ in $\pi_1(X) \slash H_M$ is finite.
\end{lem}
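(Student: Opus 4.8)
The plan is to reduce the statement about finiteness of $\pi_1(Z)$-images to the already-established Lemma~\ref{trivial_motivic} about triviality, by passing to a finite étale cover of $Z$. First I would observe that one implication is immediate: since $H_M \subset H_\Sigma$, if the image of $\pi_1(Z)$ in $\pi_1(X)/H_M$ is finite, then so is its image in the further quotient $\pi_1(X)/H_\Sigma$. The content is therefore in the reverse direction.

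So assume the image $G$ of $\pi_1(Z)$ in $\pi_1(X)/H_\Sigma$ is finite. Let $Z' \to Z$ be the connected finite étale cover corresponding to the kernel of $\pi_1(Z) \to G$; then the image of $\pi_1(Z')$ in $\pi_1(X)/H_\Sigma$ is trivial. I would then pass to a desingularization of each irreducible component of $Z'$: since $Z'$ is a connected proper variety, let $Y \to Z'$ be a proper surjective morphism from a (possibly disconnected) smooth projective variety, arranged so that $\pi_1(Y) \to \pi_1(Z')$ has image of finite index on each component (e.g.\ by desingularizing each component); in fact for the argument it suffices that the composite $\pi_1(Y) \to \pi_1(X)/H_\Sigma$ is trivial, which holds since it factors through the trivial image of $\pi_1(Z')$. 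Now Lemma~\ref{trivial_motivic}, applied to each connected component of $Y$ mapping to $X$, gives that the image of $\pi_1(Y)$ in $\pi_1(X)/H_M$ is also trivial. Since $Y \to Z'$ is surjective and proper between (normalizations of) varieties, $\pi_1(Y) \to \pi_1(Z')$ has image of finite index, so the image of $\pi_1(Z')$ in $\pi_1(X)/H_M$ is finite; and since $\pi_1(Z') \subset \pi_1(Z)$ has finite index, the image of $\pi_1(Z)$ in $\pi_1(X)/H_M$ is finite as well.

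The main obstacle I anticipate is bookkeeping around connectedness and the passage from a proper variety to a smooth projective one: Lemma~\ref{trivial_motivic} is stated for a \emph{connected smooth projective} $Y$, so I must apply it componentwise and be careful that the relevant $\pi_1$-images are genuinely controlled by finitely many smooth projective pieces. The clean way to do this is to use the setup of Section~8: take $Z' \to Y$ with $Y$ the disjoint union of desingularizations of the irreducible components of the normalization of $Z'$, so that $Y \to X$ has image a connected proper subvariety and $\pi_1(Y) \to \pi_1(Z')$ has finite-index image (surjective on a cofinal system after passing to normalization, using that $Z'$ is proper and the fibers are connected after normalization is handled as in Proposition~\ref{Shafarevich: from smooth to normal}). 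With that in place, the triviality statement transfers to each component of $Y$ via Lemma~\ref{trivial_motivic}, and finiteness for $\pi_1(Z)$ follows by the finite-index argument above.

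\begin{proof}
Since $H_M \subset H_\Sigma$, the quotient $\pi_1(X)/H_\Sigma$ is a further quotient of $\pi_1(X)/H_M$, so if the image of $\pi_1(Z)$ in $\pi_1(X)/H_M$ is finite, then a fortiori so is its image in $\pi_1(X)/H_\Sigma$. Conversely, assume the image of $\pi_1(Z)$ in $\pi_1(X)/H_\Sigma$ is finite. Replacing $Z$ by the connected finite étale cover associated to the kernel of the surjection from $\pi_1(Z)$ onto this finite image, we may assume that the image of $\pi_1(Z)$ in $\pi_1(X)/H_\Sigma$ is trivial; it suffices to prove that then the image of $\pi_1(Z)$ in $\pi_1(X)/H_M$ is finite, since passing back to $Z$ only enlarges this image by a finite-index subgroup. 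Let $\bar Z \to Z$ be the normalization and let $Y \to \bar Z$ be a proper surjective morphism with $Y$ a (non-necessarily connected) smooth projective variety obtained by desingularizing each irreducible component of $\bar Z$; then $\pi_1(Y) \to \pi_1(Z)$ has image of finite index, and the composite $\pi_1(Y) \to \pi_1(X)/H_\Sigma$ is trivial. Applying Lemma~\ref{trivial_motivic} to each connected component of $Y$ (each of which maps to $X$), the image of $\pi_1(Y)$ in $\pi_1(X)/H_M$ is trivial. Since the image of $\pi_1(Y)$ in $\pi_1(Z)$ has finite index, the image of $\pi_1(Z)$ in $\pi_1(X)/H_M$ is finite, as desired.
\end{proof}
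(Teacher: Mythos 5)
There is a genuine gap in the reverse direction, at the step where you claim that $\pi_1(Y) \to \pi_1(Z')$ has image of finite index when $Y$ is the disjoint union of desingularizations of the irreducible components of (the normalization of) $Z'$. This is false in general: already for a nodal rational curve $Z'$ one has $\pi_1(Z') \cong \bZ$ while the normalization is $\bP^1$, so the image of $\pi_1(Y)$ is trivial and of infinite index. More generally, $\pi_1$ of a singular or reducible proper variety contains loops coming from the identifications among branches and from the configuration of components, and these are invisible to the desingularized pieces. Your appeal to Proposition \ref{Shafarevich: from smooth to normal} does not help here, since the normalization map is not a map with connected fibres onto a normal target. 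Consequently, knowing that the image of $\pi_1(Y)$ in $\pi_1(X)/H_M$ is trivial (which Lemma \ref{trivial_motivic} does give you) does not by itself bound the image of $\pi_1(Z')$, let alone of $\pi_1(Z)$.

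This missing step is exactly the content of Corollary \ref{Corollary_Lasell-Ramachandran}: for reductive linear representations, triviality of the pullback to every desingularized component forces \emph{finiteness} (not triviality) of the image of $\pi_1$ of the connected image variety, and this is a substantive theorem proved via the $p$-adic boundedness and finiteness arguments of Section 6 (Theorem \ref{criterion of finiteness} and Theorem \ref{Theorem_Lasell-Ramachandran}), not a topological finite-index statement. The paper's proof is otherwise the same as yours up to that point (trivial direction from $H_M \subset H_\Sigma$; pass to a finite \'etale cover $Z'$ killing the image in $\pi_1(X)/H_\Sigma$; desingularize components; apply Lemma \ref{trivial_motivic}), but it then concludes by invoking Corollary \ref{Corollary_Lasell-Ramachandran} applied to the family of desingularized components mapping onto the image of $Z$ in $X$. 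You should replace your finite-index argument by that citation.
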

\begin{proof}
If the image of $\pi_1(Z)$ in $\pi_1(X) \slash H_M$ is finite, since $H_M \subset H_ \Sigma$, the image of $\pi_1(Z)$ in $\pi_1(X) \slash H_{\Sigma}$ is a fortiori finite. \\
Conversely, assume that the image of $\pi_1(Z)$ in $\pi_1(X) \slash H_\Sigma$ is finite. Let $Z^\prime \to Z$ be a connected finite étale cover such that the image of $\pi_1(Z^\prime)$ in $\pi_1(X) \slash H_\Sigma$ is trivial. Let $Y \to Z^\prime$ be a proper desingularization of an irreducible component of $Z^\prime$. Then the image of $\pi_1(Y)$ in $\pi_1(X) \slash H_\Sigma$ is trivial. It follows by the Lemma \ref{trivial_motivic} that the image of $\pi_1(Y)$ in $\pi_1(X) \slash H_M$ is trivial. Finally, thanks to Corollary \ref{Corollary_Lasell-Ramachandran}, the image of $\pi_1(Z)$ in $\pi_1(X) \slash H_M$ is finite.
\end{proof}

Let $X$ be a normal connected complex algebraic variety. Let $H \subset \pi_1(X)$ be a normal subgroup which is commensurable to the intersection of the kernels of the monodromy representations of a collection of semisimple complex local systems $\cL_i$ on $X$ of bounded rank. Assume that the pair $(X, H)$ is maximal. In the remaining of this section, we prove the existence of the $H$-Shafarevich morphism, therefore ending the proof of Theorem \ref{main theorem}.\\

Thanks to Proposition \ref{pull-back_maximal} and Proposition \ref{Shafarevich_finite_etale_map}, we can assume that $H$ is equal to the intersection of the kernels of the monodromy representations of the $\cL_i$'s. Let $X^\prime \to X$ be a birational fibration from a smooth quasi-projective variety and let $H^\prime \subset \pi_1(X^\prime)$ be the intersection of the kernels of the monodromy representations of the pull-back of the $\cL_i$'s on $X^\prime$. Thanks to Proposition \ref{pull-back_maximal} and Proposition \ref{Shafarevich: from smooth to normal}, it is sufficient to prove the existence of the Shafarevich morphism for the pair $(X^\prime, H^\prime)$. Therefore, we can assume from the beginning that $X$ is smooth quasi-projective. Up to adding some trivial local systems to some of the $\cL_i$'s, one can assume in addition that all the $\cL_i$'s have the same rank $n \geq 1$.  Let $\Sigma \subset M_B(X, n)(\bC)$ be the subset of the corresponding conjugacy classes of monodromy representations. Let $M \subset M_B(X, n)$ be the smallest closed $\bQ$-subscheme whose set of complex points $M(\bC)$ contain $\Sigma$ and is $\bC^\ast$-invariant. Since the pair $(X, \Sigma)$ is maximal by assumption, the pair $(X, M)$ is maximal a fortiori. Therefore, the results of section \ref{A special case} implies the existence of the $M$-Shafarevich morphism. Finally, by Proposition \ref{Shafa-reduction-motivic}, the $M$-Shafarevich morphism is also a $\Sigma$-Shafarevich morphism.

\section{Additional results}

\begin{prop}\label{Reduction to one local system}
Let $X$ be a normal connected complex algebraic variety. Let $\Sigma$ be collection of semisimple complex local systems of bounded rank on $X$ and let $\sh_X^\Sigma$  be the corresponding $\Sigma$-Shafarevich morphism. Then there exists a semisimple complex local system $\cL$ on $X$ such that the $\cL$-Shafarevich morphism is equal to $\sh_X^\Sigma$.
\end{prop}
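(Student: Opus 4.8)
The plan is to reduce to the case of a smooth quasi-projective variety carrying \emph{finitely many} local systems, where the statement follows at once by forming a direct sum; the passage from the arbitrary collection $\Sigma$ to a finite subcollection will be supplied by the Noetherianity of the Betti moduli space $M_B(X,n)$ together with Proposition \ref{Shafa-reduction-motivic}. If $\Sigma$ is empty we may take $\cL$ trivial, so assume $\Sigma=\{\cL_i\}_{i\in I}$ with $I\neq\emptyset$; write $\rho_i$ for the monodromy representation of $\cL_i$ and $H:=\bigcap_{i\in I}\ker\rho_i$.

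First I would pass to a smooth quasi-projective model. Choose a birational fibration $\nu\colon X'\to X$ with $X'$ smooth quasi-projective. Adjoining trivial summands to the $\cL_i$ (which changes neither $H$ nor $\sh_X^\Sigma$), and using that the ranks are bounded, I may assume all the $\cL_i$ have the same rank $n\geq 1$; put $\cL'_i:=\nu^{-1}\cL_i$, a rank-$n$ semisimple local system on $X'$, and let $\Sigma':=\{\cL'_i\}_{i\in I}\subset M_B(X',n)(\bC)$, so that $\bigcap_{i}\ker\rho_{\cL'_i}$ is the preimage of $H$ in $\pi_1(X')$. Since $\sh_X^\Sigma$ exists, Proposition \ref{H-Shafarevich morphism-pull-back} shows that $\sh_{X'}^{\Sigma'}$ exists and is the Stein factorization of the composite $X'\to X\to\Sh_X^\Sigma$, which, $\nu$ and $\sh_X^\Sigma$ being holomorphic fibrations, is simply $\sh_X^\Sigma\circ\nu$. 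It therefore suffices to find a finite subset $J\subset I$ such that $\cL'_J:=\bigoplus_{i\in J}\cL'_i$ satisfies $\sh_{X'}^{\cL'_J}=\sh_{X'}^{\Sigma'}$: for then $\cL:=\bigoplus_{i\in J}\cL_i$ is a semisimple complex local system on $X$ with $\nu^{-1}\cL=\cL'_J$, and Proposition \ref{Shafarevich: from smooth to normal} applied to $\nu$ (with the normal subgroup $\ker\rho_\cL\subset\pi_1(X)$, whose preimage in $\pi_1(X')$ is $\ker\rho_{\cL'_J}$) produces $\sh_X^\cL$ and identifies it, via the surjectivity of $\nu$, with $\sh_X^\Sigma$.

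It remains to find such a $J$, using only Proposition \ref{Shafa-reduction-motivic} and Noetherianity. For a finite $J\subset I$ let $M'_J\subset M_B(X',n)$ be the smallest closed $\bQ$-subscheme whose set of complex points contains $\{[\rho_{\cL'_i}]\}_{i\in J}$ and is $\bC^\ast$-invariant (it exists as the intersection of all such subschemes, and it is reduced, since one may always replace a closed subscheme by its reduction without affecting the defining conditions); likewise let $M'\subset M_B(X',n)$ be the analogous subscheme attached to all of $\Sigma'$. Then $M'_J\subset M'_K\subset M'$ whenever $J\subset K$. As $M_B(X',n)$ is of finite type over $\bQ$, its underlying topological space is Noetherian, so there can be no infinite strictly increasing chain $M'_{J_1}\subsetneq M'_{J_2}\subsetneq\cdots$; constructing such a chain greedily (at stage $k$, adjoin to $J_k$ an index $i$ with $[\rho_{\cL'_i}]\notin M'_{J_k}(\bC)$ if one exists) hence must stop, which means that for some finite $J$ one has $M'_J(\bC)\supset\Sigma'$; the two minimality properties then force $M'_J=M'$.

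For this $J$, $\cL'_J$ is semisimple with $\ker\rho_{\cL'_J}=\bigcap_{i\in J}\ker\rho_{\cL'_i}$, so its Shafarevich morphism is by definition the $\{\cL'_i\}_{i\in J}$-Shafarevich morphism. Applying Proposition \ref{Shafa-reduction-motivic} to the finite collection $\{\cL'_i\}_{i\in J}$ (whose attached subscheme is $M'_J$) and to $\Sigma'$ (whose attached subscheme is $M'$), and invoking $M'_J=M'$ together with the already established existence of $\sh_{X'}^{\Sigma'}$, we obtain $\sh_{X'}^{\cL'_J}=\sh_{X'}^{M'_J}=\sh_{X'}^{M'}=\sh_{X'}^{\Sigma'}$, which completes the argument. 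The conceptual work here is entirely done by Proposition \ref{Shafa-reduction-motivic}; the only points requiring attention are essentially bookkeeping — that $M'_J$ is reduced, so that the Noetherianity of the space underlying $M_B(X',n)$ applies, and that the reduction to the smooth quasi-projective case is compatible with forming a direct sum — and I do not anticipate a genuine obstacle beyond these.
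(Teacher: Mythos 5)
Your argument is correct and follows essentially the same route as the paper: reduce to a smooth quasi-projective model with all local systems of equal rank, pass to the smallest $\bC^\ast$-invariant closed $\bQ$-subscheme $M$ containing the collection, and conclude via Proposition \ref{Shafa-reduction-motivic}. The only difference is how a finite set generating $M$ is extracted: the paper picks a finite $\bQ$-Zariski-dense subset $\Phi\subset M(\bC)$ (so its $\cL$ is a sum of local systems that need not belong to $\Sigma$), whereas your Noetherian chain argument selects a finite subcollection of $\Sigma$ itself --- a marginally stronger and equally valid conclusion.
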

\begin{proof}
Arguing as in section \ref{The general case}, one can assume in addition that $X$ is smooth quasi-projective and that all the elements in $\Sigma$ have same rank $n \geq 1$. Let $M$ be the smallest closed subscheme of $M_B(X, n)$ defined over $\bQ$, whose set of complex points $M(\bC)$ contain $\Sigma$ and is $\bC^\ast$-invariant. Let $\Phi \subset M(\bC)$ be a finite subset which is $\bQ$-Zariski-dense. Then, thanks to Proposition \ref{Shafa-reduction-motivic}, the pair $(X, \Phi)$ is maximal. Moreover, the $\Phi$-Shafarevich morphism and the $\Sigma$-Shafarevich morphism are both equal to the $M$-Shafarevich morphism. Therefore, one can take for $\cL$ the sum of one local system in each conjugacy classes that belong to $\Phi$.
\end{proof}

The following result seems to have been unnoticed already in the compact case.
\begin{thm}\label{Shafarevich_factorization}
Let $X$ be a normal connected complex algebraic variety. Let $\rho \colon \pi_1(X) \to \Gamma$ be a surjective homomorphisms of groups with torsion-free image. Assume that the Shafarevich morphism $\sh_X^{\rho} \colon X \rightarrow \Sh_X^{\rho}$ for the pair $(X, \rho)$ exists. Then the representation $\rho$ factorizes through the homomorphism $\pi_1(X) \to \pi_1(\Sh_X^{\rho})$.
\end{thm}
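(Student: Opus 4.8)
The plan is to lift the whole picture to the covering $\tilde X^{\rho}$ determined by $\rho$ and to use the torsion-freeness hypothesis to promote a properly discontinuous action to a free one. Write $\sh:=\sh_X^{\rho}\colon X\to T:=\Sh_X^{\rho}$ and $\Gamma:=\pi_1(X)/\ker\rho$, which is torsion-free since $\rho$ is surjective with torsion-free image. Since the Shafarevich morphism for $(X,\rho)=(X,\ker\rho)$ exists (assertion~$2$ of Theorem~\ref{equivalent_Shafa}), that theorem and its proof furnish: the Galois \'etale cover $\tilde X^{\rho}\to X$ with deck group $\Gamma$; a holomorphic fibration $\tilde\sh\colon \tilde X^{\rho}\to\tilde\Sh$ (abbreviating $\tilde\sh:=\tilde\sh_X^\rho$, $\tilde\Sh:=\tilde\Sh_X^\rho$) onto a normal complex space with no positive-dimensional compact analytic subspace; a properly discontinuous $\Gamma$-action on $\tilde\Sh$ for which $\tilde\sh$ is equivariant; and an identification $T=\Gamma\backslash\tilde\Sh$ under which $\sh$ is the induced map on quotients, so that the square with vertical maps $\tilde X^{\rho}\to X$ and $\beta\colon\tilde\Sh\to T$ and horizontal maps $\tilde\sh$, $\sh$ commutes. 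It then suffices to prove that $\beta$ is an unramified covering and that the canonical map $\tilde X^{\rho}\to X\times_{T}\tilde\Sh$ is an isomorphism of coverings of $X$: the monodromy of the pullback covering $X\times_T\tilde\Sh\to X$ visibly kills $\ker\sh_\ast$, and transporting this through the isomorphism yields $\ker\sh_\ast\subseteq\ker\rho$, i.e.\ the asserted factorization of $\rho$ through $\pi_1(X)\to\pi_1(\Sh_X^{\rho})$.

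The key point is that the $\Gamma$-action on $\tilde\Sh$ is free. Suppose some $\gamma\neq e$ fixes a point $s\in\tilde\Sh$. As $\tilde\sh$ is $\Gamma$-equivariant, $\gamma$ preserves the fibre $Z:=\tilde\sh^{-1}(s)$, which is a non-empty compact analytic subspace of $\tilde X^{\rho}$ because $\tilde\sh$ is proper and surjective. Proper discontinuity of the $\Gamma$-action on $\tilde X^{\rho}$ forces $\{\gamma'\in\Gamma : \gamma' Z\cap Z\neq\emptyset\}$ to be finite, and this set contains the cyclic subgroup generated by $\gamma$; hence $\gamma$ has finite order, contradicting that $\Gamma$ is torsion-free. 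Therefore the action is free, and being also properly discontinuous, $\beta\colon\tilde\Sh\to\Gamma\backslash\tilde\Sh=T$ is a covering map with deck group $\Gamma$. Moreover $\tilde\Sh$ is connected, being the image of the connected space $\tilde X^{\rho}$ under the surjection $\tilde\sh$, so $\beta$ is a connected $\Gamma$-Galois covering; in particular $\pi_1(T)$ surjects onto $\Gamma$.

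Finally, the commuting square above gives a canonical morphism $\mu\colon\tilde X^{\rho}\to X\times_{T}\tilde\Sh$ over $X$, equivariant for the $\Gamma$-action on the $\tilde\Sh$-factor of the target. Both $\tilde X^{\rho}$ and $X\times_{T}\tilde\Sh$ are coverings of $X$ (the latter as the pullback of $\beta$), so $\mu$ is \'etale; and over each $x\in X$ it is a $\Gamma$-equivariant map between two $\Gamma$-torsors, hence bijective. An \'etale, fibrewise-bijective morphism of coverings of $X$ is an isomorphism, so $\tilde X^{\rho}\cong X\times_{T}\tilde\Sh$ over $X$. The monodromy of $X\times_{T}\tilde\Sh\to X$ along a loop $\gamma$ is the monodromy of $\beta$ along $\sh\circ\gamma$, which is null-homotopic in $T$ whenever $\sh_\ast\gamma=1$, so this monodromy kills $\ker\sh_\ast$; via the isomorphism it agrees (up to an automorphism of $\Gamma$, hence with the same kernel) with $\rho$, giving $\ker\sh_\ast\subseteq\ker\rho$ as desired. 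I expect the only genuinely non-formal step to be the freeness of the $\Gamma$-action on $\tilde\Sh$, which is precisely where the hypothesis that $\rho$ has torsion-free image is indispensable; the rest is formal covering-space theory applied to the Stein-factorisation picture of Theorem~\ref{equivalent_Shafa}.
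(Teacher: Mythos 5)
Your proposal is correct and follows essentially the same route as the paper: descend the $\Gamma$-action to $\tilde{\Sh}_X^\rho$, use torsion-freeness together with proper discontinuity to conclude the action is free so that $\tilde{\Sh}_X^\rho \to \Sh_X^\rho$ is a $\Gamma$-Galois covering, and then identify $\tilde{X}^\rho$ with $X \times_{\Sh_X^\rho} \tilde{\Sh}_X^\rho$ by a $\Gamma$-equivariant \'etale map argument. Your write-up merely supplies more detail (e.g.\ the compact-fibre argument for freeness and the fibrewise torsor comparison) than the paper's terser version.
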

\begin{proof}
Let $\tilde{X}^\rho \to X$ be the Galois étale cover associated to $\rho$, equipped with its $\Gamma$-action. 
Thanks to Proposition \ref{equivalent_Shafa} and its proof, there exists a holomorphic fibration ${\tilde{X}^\rho} \to S$, with $S$ a normal complex analytic space which does not have any positive-dimensional compact analytic subspace. By unicity, the Galois action of $\Gamma$ on ${\tilde{X}^\rho} $ descends to an action on $S$ which is still properly discontinuous. Moreover, the map induced on the quotients $\sh_X^\rho \colon X \rightarrow \Sh_X^\rho$ is the $\rho$-Shafarevich morphism. Since $\Gamma$ is torsion-free and the action of $\Gamma$ on $S$ is proper, the action of $\Gamma$ on $S$ is also free. Therefore, the map $S \to  \Sh_X^\rho$ is a Galois étale cover with Galois group $\Gamma$. 
To conclude the proof, we need to check that the following commutative diagram is cartesian:
\[
\begin{tikzcd}
\tilde{X}^\rho \arrow[d] \arrow[r] & S \arrow[d] \\
X \arrow[r, "\sh_X^\rho"]&  \Sh_X^\rho
\end{tikzcd}
\]
But, in the commutative diagram 
\[
\begin{tikzcd}
\tilde{X}^\rho  \arrow[r] &  X \times_{\Sh_X^\rho} S \arrow[r] \arrow[d]  &  S \arrow[d] \\
  &  X \arrow[r, "\sh_X^\rho"]&  \Sh_X^\rho,
\end{tikzcd}
\]
the map $\tilde{X}^\rho  \to X \times_{\Sh_X^\rho} S$ is a $\Gamma$-equivariant étale map between two connected $\Gamma$-Galois étale covers of $X$, hence it is an isomorphism.
\end{proof}

\section{The abelian case}
In this section, we describe an alternative construction of the Shafarevich morphism associated to a representation whose image is abelian. \\

Let $X$ be a connected smooth algebraic variety. Let $\rho \colon \pi_1(X) \to \Gamma$ be a homomorphism of groups, with $\Gamma$ abelian. Up to replacing $\Gamma$ by the quotient of the image of $\rho$ by its torsion subgroup, one can assume that $\Gamma$ is a free $\bZ$-module. Since $\Gamma$ is abelian, $\rho$ factorizes through the abelianization $\pi_1(X) \to \mathrm{H}_1(X , \bZ)$ of $\pi_1(X)$. Let $\alb \colon X \to \Alb(X)$ denotes the Albanese mapping, where $\Alb(X)$ is a semi-abelian variety (cf. \cite{Fujino-Albanese}). If $\bar X$ is a smooth compactification of $X$ such that $D := \bar X - X$ is a simple normal crossing divisor, then $\Alb(X) := \HH^0(\bar X, \Omega^1_{\bar X}(\log D))^\vee \slash \HH_1(X, \bZ)$. Let $\Alb(X) \to A_\rho$ be the quotient of $\Alb(X)$ by the biggest semi-abelian subvariety $T$ of $\Alb(X)$ such that $H_1(T, \bZ ) \subset \ker(H_1(X, \bZ) \to \Gamma)$.

\begin{thm}\label{Shafarevich in the abelian case}
If the pair $(X, \rho)$ is maximal, then the composition of the Albanese morphism $ X \to \Alb(X)$ with  $\Alb(X) \to A_\rho$ is a proper morphism, and its Stein factorization is the $\rho$-Shafarevich morphism.
\end{thm}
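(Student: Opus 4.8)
The plan is to prove that the morphism $g\colon X\to A_\rho$ obtained by composing the Albanese map $\alb\colon X\to\Alb(X)$ with the quotient $\beta\colon\Alb(X)\to A_\rho$ is proper, and then that its Stein factorization $g=h\circ s$ — with $s\colon X\to S$ a holomorphic fibration onto a normal complex space and $h\colon S\to A_\rho$ finite — satisfies the characterization of the $\rho$-Shafarevich morphism provided by Theorem \ref{equivalent_Shafa}. I will use freely the reductions made before the statement: $\Gamma$ is a free $\bZ$-module and $\rho$ is surjective (maximality of $(X,\rho)$ being preserved, since a puncture with infinite monodromy in $\mathrm{image}(\rho)$ still has nonzero monodromy in the torsion-free quotient $\Gamma$), and, by the very definition of $T$, the homomorphism $\rho$ factors as $\pi_1(X)\to\pi_1(A_\rho)=H_1(A_\rho,\bZ)=L/H_1(T)\xrightarrow{q}\Gamma$, where $L:=H_1(X,\bZ)/(\mathrm{torsion})$ and $q$ has kernel $\bar N/H_1(T)$ with $\bar N:=\ker(L\to\Gamma)\supseteq H_1(T)$.

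\emph{Properness.} Fix a smooth complete compactification $\bar A_\rho$ of the semiabelian variety $A_\rho$ and, resolving the induced rational map, a smooth compactification $\bar X'$ of $X$ together with a morphism $\bar g\colon\bar X'\to\bar A_\rho$ extending $g$. Then $g$ is proper if and only if $\bar g$ maps $\bar X'\smallsetminus X$ into $\bar A_\rho\smallsetminus A_\rho$. If this fails, choose $p\in\bar X'\smallsetminus X$ with $\bar g(p)\in A_\rho$ and a holomorphic disc $v\colon\Delta\to\bar X'$ with $v(0)=p$ and $v(\Delta^\ast)\subset X$; after shrinking, $\bar g\circ v(\Delta)\subset A_\rho$, so $g\circ v|_{\Delta^\ast}$ extends holomorphically across $0$ and thus has trivial monodromy in $\pi_1(A_\rho)$. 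Composing with $q$ shows that $\bZ=\pi_1(\Delta^\ast)\to\pi_1(X)\xrightarrow{\rho}\Gamma$ is constant, contradicting the maximality of $(X,\rho)$ — which, as noted after Definition \ref{definition_maximality}, may be tested on the single compactification $\bar X'$. Hence $g$ is proper, the Stein factorization $g=h\circ s$ exists, and, $X$ being normal, $s$ is a holomorphic fibration onto a normal complex space with $h$ finite.

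\emph{The Shafarevich property.} Since $h$ is finite, $s$ contracts a connected set to a point exactly when $g$ does, so by Theorem \ref{equivalent_Shafa} it suffices to verify conditions (3) for $s$. If $F$ is a connected fibre of $s$, then $F\subset\alb^{-1}(\text{a coset of }T)$, so $F\to A_\rho$ is constant, the image of $H_1(F,\bZ)$ in $L$ lies in $\ker\beta_\ast=H_1(T)\subseteq\bar N$, and therefore the image of $\pi_1(F)$ in $\Gamma$ is trivial. If $Z\subset X$ is an irreducible closed subvariety with normalization $\bar Z$ such that the image of $\pi_1(\bar Z)$ in $\Gamma$ is finite, then it is trivial (as $\Gamma$ is torsion-free), and the same holds for $\pi_1(\tilde Z)$ for a resolution $\tilde Z\to\bar Z$ (which is $\pi_1$-surjective, $\bar Z$ being normal); hence $T_Z:=\mathrm{image}(\Alb(\tilde Z)\to\Alb(X))$ is a semiabelian subvariety with $H_1(T_Z)\subseteq\bar N$, so $T_Z\subseteq T$ by the maximality built into the definition of $T$, whence $\Alb(\tilde Z)\to A_\rho$ is zero, $g|_{\bar Z}$ is constant, and $s(Z)$ is a point. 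By Theorem \ref{equivalent_Shafa} and uniqueness, $s$ is the Shafarevich morphism for $(X,\ker\rho)$, i.e.\ the $\rho$-Shafarevich morphism. (One can also check Definition \ref{H-Shafarevich morphism} directly: for $f\colon Z\to X$ with $Z$ connected proper, $g\circ f$ constant forces $f_\ast\pi_1(Z)\subseteq\ker g_\ast\subseteq\ker\rho$; conversely, if $f_\ast\pi_1(Z)\subseteq\ker\rho$, then, considering the abelian subvariety of $A_\rho$ generated by the differences $g(f(z))-g(f(z'))$ and invoking the maximality of $T$, $g(f(Z))$ is a point.)

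\emph{Main obstacle.} The crux is the properness statement: translating non-properness into a boundary disc with trivial $\Gamma$-monodromy, and arranging the compactified morphism $\bar g$ so that this monodromy comparison is meaningful. The verification of the Shafarevich property is then essentially formal — it uses only functoriality of the Albanese, the fact that maps from (normal) proper varieties to semiabelian varieties are governed by their Albanese, and the maximality encoded in $T$; in particular it does not require the maximality hypothesis on $(X,\rho)$.
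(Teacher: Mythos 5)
Your proof is correct, but it takes a noticeably different route from the paper's on the key point. The paper realizes the Albanese map as the period map of an admissible $\bZ$-variation of mixed Hodge structure (via the extension classes $0 \to H_1(X,\bZ) \to H_1(X,\{x,y\},\bZ) \to \bZ(0) \to 0$), pushes out along $H_1(X,\bZ) \to H$ where $H$ is the quotient by the largest sub-MHS contained in $\ker(H_1(X,\bZ)\to\Gamma)$, and then obtains properness by citing the period-map properness criterion of \cite[Lemma 2.3]{BBT-mixed-Griffiths}; the Shafarevich property is deduced from strictness of morphisms of MHS. You instead work directly with the semiabelian quotient $A_\rho$ and prove properness by hand: compactify, observe that non-properness produces a boundary disc whose image stays in $A_\rho$, and conclude that the puncture has trivial monodromy in $\pi_1(A_\rho)$, hence in $\Gamma$ (using $\rho = q\circ g_\ast$), contradicting maximality --- in effect you reprove the Griffiths-type criterion in this abelian special case, where the ``period domain'' is the algebraic target itself. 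Your verification of the Shafarevich property via functoriality of the quasi-Albanese and maximality of $T$ is the same argument as the paper's in different language (semiabelian subvarieties of $\Alb(X)$ with $H_1$ in $\ker\rho$ versus sub-MHS of $H_1(X,\bZ)$ in $\ker\rho$). What your approach buys is self-containedness and independence from the mixed-Hodge-theoretic apparatus; what the paper's buys is that it places the abelian case inside the general framework of period maps of admissible VMHS, consistent with the rest of the article. Two small points to tighten: the image of $\Alb(\tilde Z)\to\Alb(X)$ is a priori a coset of a semiabelian subvariety, so normalize base points before calling it $T_Z$; and when you pass from ``image of $H_1(\tilde Z,\bZ)$ lies in $\bar N$'' to ``$H_1(T_Z)\subseteq\bar N$'' you are using that $\bar N=\ker(L\to\Gamma)$ is saturated in $L$ (true since $\Gamma$ is torsion-free), since $H_1(T_Z)$ only contains that image with finite index.
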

\begin{proof}
Let $K$ be the biggest sub-$\bZ$-mixed Hodge structure of $H_1(X, \bZ)$ such that the underlying $\bZ$-module is contained in the kernel of the morphism $H_1(X, \bZ) \to \Gamma$ induced by $\rho$. Let $H$ be the quotient of $H_1(X, \bZ)$ by $K$, so that $H$ is a $\bZ$-mixed Hodge structure, whose underlying $\bZ$-module is torsion-free.
Since the pair $(X, \rho)$ is maximal, the homomorphism $\pi_1(X) \to H_1(X, \bZ) \to H$ is also maximal. Moreover, for every connected complex algebraic variety $Z$ equipped with a morphism $Z \to X$, the induced homomorphism $\pi_1(Z) \to \Gamma$ factorizes as $\pi_1(Z) \to H_1(Z, \bZ) \to H_1(X, \bZ) \to \Gamma$. Therefore, the homomorphism $\pi_1(Z) \to \Gamma$ is zero if and only if the homomorphism $\pi_1(Z) \to H$ is zero. In other words, the $\rho$-Shafarevich morphism exists if and only if the Shafarevich morphism of the homomorphism $\pi_1(Z) \to H$ exists. Therefore, one can assume that $\Gamma$ is the $\bZ$-module of a quotient $\bZ$-mixed Hodge structure $H_1(X, \bZ)$.\\

Fix $x \in X$. For every $y \in X$ distinct from $x$, there is an exact sequence of $\bZ$-mixed Hodge structures:
\[  0 \to H_1(X, \bZ) \to H_1(X, \{x,y \}, \bZ) \to H_0(\{x,y\}, \bZ) \to H_0(X, \bZ) .\]
The kernel of morphism $H_0(\{x, y\}, \bZ) \to H_0(X, \bZ)$ is isomorphic to $\bZ(0)$, by sending $1$ to $y -x$. Therefore, the preceding exact sequence yields the exact sequence:
\begin{equation}\label{Albanese_exact_sequence}
0 \to H_1(X, \bZ) \to H_1(X, \{x,y \}, \bZ) \to \bZ(0) \to 0.
\end{equation}

This provides a map 
\[a \colon X - \{x\} \to \mathrm{Ext}^1_{\bZ-MHS}(\bZ(0) , H_1(X, \bZ)) = J(H_1(X, \bZ)),\]
where, for any $\bZ$-mixed Hodge structure $H$ whose weights are non-positive, $J(H) := H_{\bC} \slash (F^0 + H_{\bZ})$ is the corresponding intermediate Jacobian \cite{Carlson}. Therefore, the map $a$ is the period map associated to a $\bZ$-variation of mixed Hodge structure of geometric origin (hence admissible). In particular, it is holomorphic. If $\dim X \geq 2$, then this implies that the map extends as a holomorphic map $a \colon X \to J(H_1(X, \bZ))$. An easy computation shows that this is also the case if $\dim X = 1$.\\

The push-out along the morphism $H_1(X, \bZ) \to H$ yields a map 
\[\mathrm{Ext}^1_{\bZ-MHS}(\bZ(0) , H_1(X, \bZ)) \to \mathrm{Ext}^1_{\bZ-MHS}(\bZ(0) , H),\]
or equivalently a map $J(H_1(X, \bZ)) \to J(H)$ which is holomorphic.
The composite map $X \to  J(H)$ is the period map associated to an admissible $\bZ$-variation of mixed Hodge structure with monodromy representation $\pi_1(X) \to H$. Thanks to \cite[Lemma 2.3]{BBT-mixed-Griffiths}, the period map $X \to  J(H)$ is proper if and only if the pair $(X, \rho)$ is maximal. Moreover, by definition of $H$, for every connected complex algebraic variety $Z$ equipped with a morphism $Z \to X$, the induced homomorphism $\pi_1(Z) \to H$ is zero if and only if the composite map $Z \to X \to J(H)$ is constant. This completes the proof of Theorem \ref{Shafarevich in the abelian case}.
\end{proof}

\bibliographystyle{alpha}
\bibliography{biblio}

\end{document}